\documentclass[12pt]{amsart}
\usepackage[utf8]{inputenc}
\usepackage[active]{srcltx}
\usepackage{hyperref}
\usepackage{tikz,pgfplots}
\pgfplotsset{compat=1.16} 

\usepackage{amsmath,amssymb,amsthm,mathtools}
\usepackage{mathabx}

\setcounter{tocdepth}{1}

\newtheorem{theorem}{Theorem}
\newtheorem*{theorem*}{Theorem}
\newtheorem{corollary}{Corollary}
\newtheorem{lemma}{Lemma}
\newtheorem{proposition}{Proposition}
\newtheorem*{proposition*}{Proposition}

\theoremstyle{definition}
\newtheorem{remark}[theorem]{Remark}
\newtheorem{remarks}[theorem]{Remarks}
\newtheorem{definition}[theorem]{Definition}

\newtheorem*{questions*}{Questions}
\newtheorem*{question*}{Question}
\newtheorem*{notation*}{Notation}

\def\R{\mathbb{R}}

\def\Z{\mathbb{Z}}
\def\N{\mathbb{N}}

\def\T{\mathbb{T}}

\def\cA{{\mathcal A}}
\def\cC{{\mathcal C}}
\def\cD{{\mathcal D}}
\def\cE{{\mathcal E}}
\def\cF{{\mathcal F}}
\def\cL{{\mathcal L}}
\def\cJ{{\mathcal J}}
\def\cM{{\mathcal M}}
\def\cN{{\mathcal N}}
\def\cQ{{\mathcal Q}}
\def\cR{{\mathcal R}}
\def\cS{{\mathcal S}}
\def\cT{{\mathcal T}}
\def\cU{{\mathcal U}}
\def\cV{{\mathcal V}}
\def\cW{{\mathcal W}}
\def\cZ{{\mathcal Z}}

\def\e{\epsilon}

\def\ent{\mathrm{ent}\,}

\def\logvol{\mathrm{logvol}\,}
\def\rad{\mathrm{rad}\,}
\def\rank{\mathrm{rank}\,}
\def\vol{\mathrm{vol}\,}

\begin{document}

\title[Invariant submanifolds]{Invariant Submanifolds\\
  of conformal Symplectic Dynamics} 

\author[M.-C. Arnaud \& J. Fejoz]{Marie-Claude Arnaud$^{\dag,\ddag,\circ}$ \& Jacques Fejoz$^{*,**,\circ}$}

\email{Marie-Claude.Arnaud@imj-prg.fr, jacques.fejoz@dauphine.fr}

\date{}

\keywords{ conformal symplectic dynamics, isotropy,
  entropy, exactness, Lagrangian submanifold, invariant manifold.}

\subjclass[2020]{ 37C05,37J39, 38A35      	 }

\thanks{$\dag$ Universit\'e de Paris and Sorbonne Universit\'e, CNRS, IMJ-PRG, F-75006 Paris, France. }  
\thanks{$\ddag$ Member of the {\sl Institut universitaire de France}}
\thanks{$\circ$ ANR AAPG 2021 PRC CoSyDy: Conformally symplectic dynamics, beyond symplectic dynamics}
\thanks{$*$ Université Paris Dauphine -- PSL, Ceremade}
\thanks{$**$ Observatoire de Paris -- PSL, IMCCE}

\maketitle

\begin{abstract}
We study invariant manifolds of conformal symplectic dynamical systems on a symplectic manifold $(\cM,\omega)$ of dimension $\geq 4$. This class of systems is the $1$-dimensional extension of symplectic dynamical systems for which the symplectic form is transformed colinearly to itself. 

In this context, we first examine how the $\omega$-isotropy of an invariant manifold $\cN$ relates to the entropy of the dynamics it carries. Central to our study is Yomdin's inequality, and a refinement obtained using {  that the local entropies have no effect transversally to the characteristic foliation of $\cN$.} 

When $(\cM,\omega)$ is exact and $\cN$ is isotropic, %and isotopic to a graph,
we also show that $\cN$ must be exact for some choice of the primitive of $\omega$, under the condition that the dynamics acts trivially on the cohomology of degree $1$ of $\cN$. The conclusion partially extends to the case when $\cN$ has a compact one-sided orbit. 

We eventually prove the uniqueness of invariant submanifolds $\cN$ when $\cM$ is a cotangent bundle, provided that the dynamics is isotopic to the identity among Hamiltonian diffeomorphisms. In the case of the cotangent bundle of the torus, a theorem of Shelukhin allows us to conclude that $\cN$ is unique even among submanifolds with compact orbits.
\end{abstract}

\tableofcontents

\section{Introduction} \label{SecShortIntro}

%{\color{blue}   
%COMME EXEMPLE ON POURRAIT RAJOUTER QUE QUELQUE SOIT LE HAMILTONIEN, QUAND $\alpha$ EST TRES NEGATIF, %L'ATTRACTEUR EST UN EVARIETE LAGRANGIENNE NORMALEMENT HYPERBOLIQUE.}

Let $(\cM^{2d}, \omega)$ be a symplectic manifold. Symplectic dynamical systems (so-called conservative dynamical systems) form a class of infinite codimension. We will study conformal symplectic dynamics, a now classical extension of symplectic dynamics\footnote{Vaisman~\cite{Vaisman1985} and others have defined local conformal symplectic structures on a manifold $\cM$. There is a corresponding notion of dynamics preserving the structure, thus extending our setting.} where the symplectic form may change in its own direction:

\begin{definition}
  \quad
  \begin{itemize} 
  \item A diffeomorphism $f: \cM \righttoleftarrow$ is \emph{conformal symplectic} if $f^*\omega=a\, \omega$ for some $a>0$ (conformality ratio).\footnote{As Libermann noticed
      \cite{Lib1959}: if $f^*\omega=a\, \omega$ for some smooth function
      $a$, $a\,\omega$ being closed we have $da \wedge \omega = 0$, which
      implies, if $\cM$ has dimension $\geq 4$, that $a$ is constant.}
  \item A complete vector field $X$ on $\cM$ is \emph{conformal symplectic} if $L_X\omega=\alpha\, \omega$, where $L_X$ is the Lie derivative, for some $\alpha \in \R$ (conformality rate).\footnote{Then the flow $(\varphi_t)$ of $X$ is conformal symplectic and $\varphi_t^*\omega=e^{\alpha t}\omega$.}
  \end{itemize}
\end{definition}

Such dynamics encapsulate mechanical systems whose friction force is proportional to velocity, in which case $a<1$ or $\alpha<0$.

In this paper we will focus on the non-symplectic case, i.e. $a \neq 1$ and $\alpha \neq 0$. Of course, time reversal changes $a$ in $1/a$ and $\alpha$ in $-\alpha$.

For such a dynamics, the volume form $\omega^{\wedge d}$ is monotonic. So if such a dynamics exists on $\cM$, $\cM$ cannot be closed and has infinite volume. Moreover, when the dynamics is given by a vector field $X$, the symplectic form satisfies $\omega=\frac{1}{\alpha} L_X\omega= d\left(\frac{1}{\alpha}i_X\omega\right)$ and is exact. Hence conformal vector fields exist only on exact symplectic manifolds. Yet this is not the case for conformal diffeomorphisms (see an example in Proposition \ref{Pnonisotrop}).

Also, if a vector field $X$ is conform symplectic of conformality rate $\alpha$ and if $Z$ is the Liouville vector field associated with the $1$-form $\lambda= - \frac{1}{\alpha}i_x\omega$ i.e., $i_Z\omega = \lambda$, then $X+\alpha Z$ is symplectic. Thus conformal symplectic vector field form a $1$-dimensional extension of the space of symplectic vector fields. When $(\cM, \omega)$ is exact, there exists a 1-parameter subgroup $\cC$ of the set of conform symplectic diffeomorphims such that the group of conform symplectic diffeomorphisms is $\{f\circ g; (f, g)\in \cC\times\cS\}$ where $\cS$  is the set of symplectic diffeomorphisms. When $\cM$ is not exact, let $\cR$ be the subgroup of $\R_+^*$ of conformal ratios of conformal symplectic diffeomorphisms of $\cM$. This subgroup can be trivial, e.g. when $\cM$ is compact (all conform symplectic diffeomorphism are symplectic). 

\begin{questions*}
  Can $\cR$ be strictly between $\{1\}$ and $\R^*_+$? Assuming that $\cR=\R^*_+$, does there exist a continuous $1$-para\-meter family of conform symplectic diffeomorphisms indexed by its conformal ratio in $\R^*_+$?
\end{questions*}

\medskip An important case is that of cotangent bundles $(\cM = T^*\cQ, \omega=-d\lambda)$, where $\cQ$ is a manifold and $\lambda$ is the canonical Liouville 1-form. A continuous-time example is the flow $\exp(tZ_\lambda)(q, p)=(q, e^{-t}p)$ of the Liouville vector field $Z_\lambda$ defined by $i_{Z_\lambda}(-d\lambda)=\lambda$ and
% $X(q,p) = (0, \alpha \, p)$ and
a discrete-time example is $f = \exp Z_\lambda: (q, p) \mapsto (q, a p)$, $a=e^{-1}$. These two examples of conformal symplectic dynamics have a very simple behaviour: 
\begin{itemize}
    \item there is a global attractor $\cA$;
    \item the $\omega$-limit set of every orbit is a point of  $\cA$.
\end{itemize}
More generally, consider a discounted Tonelli vector field $X$ on $T^*\cQ$ of negative rate $\alpha$; by definition it satisfies $i_X\omega = dH + \alpha \lambda$ for some Hamiltonian $H$  which is superlinear in the fiber direction and whose Hessian in the fiber direction is positive definite. It has been shown that the flow of such a vector field has a global attractor \cite{MarSor2017}. 

\medskip In the general setting, many natural questions are open, for example:

\begin{questions*}
    Which conditions ensure the existence of a global attractor? And provided that the global attractor exists (necessarily having zero volume), what can be said of its size?
\end{questions*}

\bigskip As a first step, in this article we focus on the case of invariant submanifolds (with a digression on the case of submanifolds with compact orbit), although the study of dissipative twist maps proves that there can exist invariant subsets that are not submanifolds \cite{LeCalvez1988}. 

\medskip First, we explore the isotropy of invariant submanifolds. This question is akin to its analogue in symplectic dynamics, where both negative and positive results have been proven in particular for invariant tori carrying minimal quasiperiodic flows. 

We start by providing an example where an invariant submanifold is a hypersurface and hence non-isotropic (Propositions \ref{Pflownonisotrop} and \ref{Pnonisotrop} in section \ref{Sisotropy}). There exist similar examples due to McDuff, \cite{McDuff1991} and Geiges \cite{Geiges1994,Geiges1995}, but our example is somewhat more explicit. We do not know if a similar example exists on a cotangent bundle. An even more difficult question is to determine whether such submanifolds may exist for discounted Tonelli flows on cotangent bundles. In this case and when $\dim \cM\geq 4$, the global attractor never separates $\cM$ and hence cannot be a hypersurface.

In turn, we show some positive results regarding the isotropy of invariant submanifolds. If the invariant submanifold is a surface, isotropy follows from a simple argument using the growth of the area. In higher dimension, a first result follows from Yomdin's theory \cite{Yom1987, Gromov1987}. Proposition \ref{PYomdin} of section \ref{Sisotropy} states that if a smooth\footnote{Smooth means $C^\infty$.} conformal diffeomorphism $f: \cM \righttoleftarrow$ with conformality rate $a$ has an invariant smooth submanifold $\cN \subset \cM$ such that the topological entropy of $f_{|\cN}$ is less than $|\log(a)|$, then $\cN$ is isotropic. 

But Yomdin's proof can be improved in the setting of diffeomorphisms which are conform with respect to a presymplectic form. Here, we prove that { the so-called local entropies have no effect on the volume growth transversally to the characteristic foliation of $\cN$ (section~\ref{Sentropy})}. It follows that if a conformal symplectic $C^3$-diffeomorphism
%\marginpar{\color{blue} J'ai chang\'e 2 en 3, voir %pourquoi quand on donne (et d\'emontre) les %\'enonc\'es.}
of conformality ratio $a$ has an invariant $C^3$-manifold on which $\omega$ has constant rank $2\ell$ and such that the entropy of $f_{|\cN}$ is smaller than $\ell\, |\log a|$, $\cN$ is isotropic. In particular, if an invariant submanifold carries a minimal dynamics (every orbit is dense) with zero entropy, it is isotropic (corollary~\ref{CCisoento}).

This new result assumes less regularity than the former one ($C^3$ instead of smooth in Proposition~\ref{PYomdin}) but requires that the symplectic form restricted to the submanifold has constant rank. 

A related result is \cite[2.2.1]{CaCeLlA2013}, where the authors prove that if a $C^1$ conformal dynamics has a $C^1$ invariant torus on which the dynamics is $C^1$ conjugate to a rigid rotation, then this torus is isotropic. This results is a direct consequence of Proposition \ref{PYomdin}. Corollary \ref{CCisoento} of section \ref{Sentropy} doesn't imply this result because our result require more regularity, and on the other hand our result applies when a $C^3$ dynamics is $C^0$ conjugated to a transitive rotation.

\medskip Second, we examine the question of exactness. In this purpose, in section~\ref{SLiouville} we assume that $(\cM, \omega=-d\lambda)$ is exact. Define the Liouville class of an isotropic embedding in $\cM$ as the cohomology class of the form induced by $\lambda$. The embedding is called exact when this class vanishes. 
The action of conform symplectic diffeomorphisms on Liouville classes depends on a notion of exactness for the diffeomorphisms themselves. Let $f: \cM \righttoleftarrow$ be a conformal symplectic diffeomorphism of ratio $a$. The form $f^*\lambda-a\lambda$ is closed.

\begin{definition}
    The diffeomorphism $f$ is \emph{$\lambda$ conformal exact symplectic (CES)} if $f^*\lambda-a\lambda$ is exact.
    %or, equivalently{\color{blue}Je ne pense pas que ce soit \'equivalent. C'est %\'equivalent pour les isotopies \`a l'identit\'e.  Mais si on n'a pas %d'isotopie, on sait juste que hamiltonien implique exact conformément %symplectique.}
    It is \emph{Hamiltonian} if $f$ is the time-one map of the flow of a non autonomous conformal Hamiltonian vector field $X_t$ (meaning that $i_{X_t}\omega = \alpha_t \, \lambda + dH_t$ for all $t$).
\end{definition}

This definitions depend of the chosen primitive of the symplectic form. We prove in appendix \ref{Aconfexact} that there is always a choice of primitive for which $f$ is exact. Alternatively, we also show that $f$ is symplectically conjugate to a diffeomorphism which is 
exact with respect to the initial $\lambda$. Hence we state our results for exact conformal symplectic dynamics (see section \ref{SLiouville} for more comprehensive statements).

Our main result here is that if $f$ is an exact conform symplectic diffeomorphism and if $\cS$ is a strongly $f$-invariant submanifold (in the sense that $j \circ f (\cS) = j(\cS)$ and $f$ acts trivially on $H^1(j(\cS),\R)$), $j$ is exact. 

When $\cL$ is a Lagrangian submanifold that is H-isotopic\footnote{By \emph{H-isotopic}, we mean isotopic among Hamiltonian diffeomorphisms.} to a graph  in $\cM=T^*\cQ$ and $f$ is CS isotopic\footnote{By {\sl CS isotopic}, we mean isotopic among conform symplectic diffeomorphisms.} to ${\rm Id}_\cM$, we obtain the same conclusion when assuming only that the orbit of $\cL$ is bounded. For example, the submanifolds that are H-isotopic to the zero section and contained in an attractor satisfy this hypothesis.

\begin{question*} 
    Is it possible to obtain similar results without assuming that the Lagrangian submanifold is H-isotopic to a graph? On other manifolds?
\end{question*}

\medskip Third, in section 6, we raise the question of the uniqueness of a invariant Lagrangian submanifolds in a cotangent bundle $(T^*\cQ, -d\lambda)$. Indeed, let $f:T^*\cQ \righttoleftarrow$  be a CES diffeomorphism that is CH isotopic\footnote{By \emph{CH-isotopic}, we naturally mean isotopic among conformally Hamiltonian diffeomorphisms} to ${\rm Id}_{T^*\cQ}$. We show that there exists at most one submanifold of $T^*\cQ$ that is H-isotopic to the zero section and invariant by $f$. Key to the proof is the Viterbo distance of Lagrangian submanifolds which are H-isotopic to the zero section, and the fact that this distance is monotonic with respect to the action of $f$.

A recent result of Shelukhin even allows us to show the following. Let $f:T^*\T^n \righttoleftarrow$  be a CES diffeomorphism that is CH-isotopic to ${\rm Id}_{T^*\T^n}$. Then there exists at most one submanifold $\cL$ which is H-isotopic to the zero section and such that
$$\bigcup_{k\in\Z}f^k(\cL)\quad\text{is}\quad\text{relatively}\quad\text{compact}.$$
Hence when it exists, $\cL$ is invariant by $f$.

For discounted Tonelli flows, it was known that there is at most one invariant exact Lagrangian graph because this corresponds to the unique weak KAM solution \cite{MarSor2017}. But we give in Section \ref{AExamples} an example of such a dynamics with an invariant H-isotopic to a graph submanifold that is not a graph, hence even in this case our uniqueness result is new.

\section{Isotropy}\label{Sisotropy}

The so-called Ma{\~n}{\'e} example \cite{Man1992} (see subsection \ref{SecMan}) shows that any flow defined on a closed manifold $\cQ$ can be achieved as the restriction of a Tonelli conformal
Hamiltonian flow to the zero section of $T^*\cQ$. In this case, the zero section is an invariant Lagrangian submanifold.

The following example, which is very similar to an example of \cite{Geiges1995}, is key to this section. It  shows that a closed submanifold which is invariant by a conformal symplectic dynamics may be non $\omega$-isotropic. In the remaining of the section, we will give some general conditions under which the submanifold must be $\omega$-isotropic.

\begin{proposition}\label{Pflownonisotrop}
There exists a conformal symplectic vector field $X$ on a 4-dimensional symplectic manifold $({\cM}, \omega)$,with a
  3-dimensional invariant submanifold ${\cL}$ (hence ${\cL}$ is not
  isotropic). 
  
  Moreover, the submanifold ${\cL}$ is the global attractor for the flow $(\varphi_t)$ of $X$, $(\varphi_{t|\cL})$ is conjugate to the suspension of an Anosov automorphism of $\T^2$ with 2-dimensional stable and unstable foliations, and $(\varphi_{t|\cL})$ is transitive with entropy equal to $|\alpha|$, where $\alpha$ is the conformality rate of $X$.
\end{proposition}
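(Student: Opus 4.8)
The plan is to build the example explicitly by suspending an Anosov automorphism, so that the conformal symplectic structure comes for free from the suspension construction. First I would take a hyperbolic matrix $A \in SL_2(\Z)$, say $A = \begin{pmatrix} 2 & 1 \\ 1 & 1 \end{pmatrix}$, with eigenvalues $\mu > 1 > \mu^{-1} > 0$ and $\log\mu$ equal to the topological entropy of $A$ acting on $\T^2$. Let $\cL = (\T^2 \times \R)/\!\sim$ be the mapping torus of $A$, where $(x, s+1) \sim (Ax, s)$; this is a closed $3$-manifold carrying the suspension flow $\varphi_t^{\cL}$, which translates in the $s$-direction and is transitive with entropy $\log\mu = |\alpha|$ once we normalize $|\alpha| = \log\mu$ (equivalently rescale time). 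The stable and unstable foliations of $A$ are one-dimensional on $\T^2$; adding the flow direction makes the (weak) stable and unstable foliations of $\varphi^\cL$ two-dimensional, so $\cL$ has no sub-bundle of dimension $\leq 1$ on which $\omega$ could fail to vanish — but that observation is for intuition; the real point is simply that $\dim\cL = 3$ is odd, hence $\omega|_\cL$ is degenerate, hence $\cL$ cannot be $\omega$-isotropic for any symplectic $\omega$ on a $4$-manifold.

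**Embedding $\cL$ into a $4$-manifold with the right conformal structure.**

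Next I would realize $\cM$ as (an open neighborhood of the zero section in) a line bundle over $\cL$, or more concretely as $\cM = \cL \times \R$ (or $\cL \times (-\e, \e)$) with coordinate $r$ transverse to $\cL = \{r = 0\}$. On $\cL$ choose a contact-type structure adapted to the suspension: the mapping torus of an Anosov map carries a natural $1$-form. Concretely, write the unstable/stable coordinates on $\T^2$ as $(u, v)$ in the eigenbasis of $A$, so $A$ acts by $u \mapsto \mu u$, $v \mapsto \mu^{-1} v$; on $\T^2 \times \R$ use $\beta = e^{-s\log\mu}\, du$ and note $\beta$ is invariant under the gluing. Then on $\cM = \cL \times \R$ set
\[
\omega = d\!\left( e^{r}\,(\beta + ds) \right) \wedge \text{(something)}
\]
— more carefully: take $\lambda = e^{r}(\beta + ds)$ as a $1$-form on $\cM$ and put $\omega = dr \wedge (\beta + ds) + e^{r}\, d(\beta + ds)$, after checking this is a symplectic (nondegenerate, closed) $2$-form on a neighborhood of $\cL$; then $\cL = \{r = 0\}$ is an invariant hypersurface. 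The vector field $X$ is then $X = \partial_s + \alpha\, \partial_r$ (up to a constant rescaling), whose flow translates $s$ (inducing the suspension flow on each level $\cL \times \{r\}$) and expands or contracts $r$; one computes $L_X \omega = \alpha\, \omega$ directly. Taking $\alpha < 0$ makes $\{r = 0\}$ attracting, and since the $r$-direction is globally contracted, $\cL$ is the global attractor and the $\omega$-limit set of every point lies in $\cL$.

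**Verifying the dynamical claims, and the main obstacle.**

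Finally I would assemble the stated properties: $(\varphi_{t|\cL})$ is by construction the suspension of $A$, hence conjugate to the suspension of an Anosov automorphism of $\T^2$; its stable and unstable foliations are $2$-dimensional (the weak foliations, obtained from the $1$-dimensional foliations of $A$ together with the flow line); transitivity follows from transitivity of the suspension of a hyperbolic toral automorphism; and the topological entropy of the time-one map of a suspension flow over $(\T^2, A)$ equals the entropy of $A$, namely $\log\mu$, which we arranged to equal $|\alpha|$ by the time normalization. The claim "$\cL$ is not isotropic" is immediate from $\dim \cL = 3$ being odd (an isotropic submanifold has dimension $\leq d = 2$). I expect the main obstacle to be the careful verification that the proposed $\omega$ is genuinely symplectic (closedness is easy; nondegeneracy needs a short computation in the coordinates $(u, v, s, r)$, using that $du \wedge dv \wedge ds \wedge dr \neq 0$ and that the $e^{-s\log\mu}$ factors never vanish) and that it is globally well-defined on the mapping torus, i.e. that every ingredient is invariant under the gluing $(u,v,s) \mapsto (\mu u, \mu^{-1}v, s)$ at $s \in \Z$ — this forces the precise exponential weights in $\beta$ and is where the construction could go wrong if the weights are mismatched. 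Everything else (the $L_X\omega = \alpha\omega$ identity, the attractor and $\omega$-limit statements, the entropy count) is then a routine check.
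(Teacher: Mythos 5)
There is a genuine gap: the $2$-form you propose is not symplectic, and the underlying ansatz cannot be repaired without a new idea. Your $\lambda=e^{r}(\beta+ds)$ involves only the three coframe directions $dr,\,ds,\,du$ (the form $\beta$ is a multiple of $du$ alone), so $\omega=dr\wedge(\beta+ds)+e^{r}\,d(\beta+ds)$ is a $2$-form built from three $1$-forms and $\omega\wedge\omega=0$ identically; equivalently, $\beta+ds$ is not a contact form on the mapping torus, since $(\beta+ds)\wedge d(\beta+ds)=(\beta+ds)\wedge(\log\mu\,ds\wedge\beta)=0$. The stable covector $dv$ must enter the construction. Moreover, the more serious structural obstruction is that a symplectization-type ansatz $\omega=d\bigl(e^{r}\lambda_0\bigr)$ with $X=\partial_s+\alpha\,\partial_r$ cannot satisfy $L_X\omega=\alpha\,\omega$ here: the suspension flow pulls back the invariant unstable and stable $1$-forms by the \emph{different} factors $\mu^{t}$ and $\mu^{-t}$, so no fixed $\lambda_0$ on $\cL$ combining both directions is scaled by a single conformal factor under the flow. (As a secondary slip, your weight $e^{-s\log\mu}du$ is not invariant under the gluing $(x,s)\sim(Ax,s-1)$ with $u\mapsto\mu u$; invariance forces $\mu^{s}du$, or $\mu^{-s}dv$ for the stable direction.)

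The paper circumvents exactly this obstruction by letting the transverse coordinate enter the primitive: it takes $\Lambda=\beta_-+s\,\beta_+$ on $\cM=\cN\times\R$, where $\beta_\pm=\lambda_\pm^{z}\alpha_\pm$ are the invariant stable/unstable $1$-forms on the mapping torus $\cN$, so that $\Omega=d\Lambda$ contains both $dz\wedge\beta_\pm$ and $ds\wedge\beta_+$ terms and $\Omega^{\wedge 2}\neq 0$. The vector field is the suspension direction plus a transverse contraction whose rate is tuned to $\lambda_-^{2t}$: then $\psi_t^*\beta_-=\lambda_-^{t}\beta_-$ while $\psi_t^*(s\beta_+)=\lambda_-^{2t}s\cdot\lambda_+^{t}\beta_+=\lambda_-^{t}s\beta_+$, so both terms scale by the same factor and $\psi_t^*\Omega=\lambda_-^{t}\Omega$. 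Your dynamical verifications (entropy of the suspension, transitivity, non-isotropy because $\dim\cL=3$, the attractor statement) match the paper and are fine, but they rest on a symplectic form and a conformality identity that your construction does not deliver.
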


\begin{remarks}
  \begin{enumerate}
 % \item The manifold $({\cM}, \omega)$ that we use is not exact symplectic. We don't know if there exists a similar example in some exact symplectic manifold, for example on some cotangent bundle endowed with the canonical symplectic form.
   % \item {\color{blue} The example that we give is not homotopic to identity. Is there a similar example with an homotopic to identity dynamics, with a conformal symplectic flow?}
    
  \item In our example, ${\cL}$ is coisotropic, but it is easy to extend this example to an invariant submanifold which is neither isotropic nor coisotropic. Indeed, let $Y$ be a conformal
    symplectic vector field on a symplectic manifold $(\cN, \omega')$  with a periodic orbit $\gamma$. Then the sum $X\oplus Y$ admits ${\cL}\times \gamma$ as an invariant submanifold that is neither isotropic nor coisotropic in $\cM\times\cN$ if $\dim\cN\geq 4$.
  
  \item The submanifold ${\cL}$ is the maximal (among compact subsets)
    attractor of the dynamics. %Observe that ${\cL}$ contains no (non-necessarily invariant) Lagrangian submanifold. Indeed, such a Lagrangian submanifold contained in ${\cL}$ has to be tangent to the charasteristic foliation on ${\cL}$ (which is integrable), and thus has to contain an unstable leaf. Such a leaf is dense in ${\cM}$.
    \item Replacing the vector field $X$ by $bX$ for $b\in\R$, we can achieve any positive value for the entropy.
  \end{enumerate}
\end{remarks}
\begin{questions*}
We don't know if it is possible to build a non-isotropic example on a cotangent bundle endowed with its usual symplectic form or, even stronger, if a similar example exists on such a manifold among Tonelli flows.
\end{questions*}
\begin{proof}[Proof of Proposition \ref{Pflownonisotrop}]We consider an Anosov  automorphism $A: \T^2\righttoleftarrow$ induced by a matrix $\begin{pmatrix}
a &b\\
c &d
\end{pmatrix} \in SL(2, \Z)$ with eigenvalues $0<\lambda_-< 1 < \lambda_+=\frac{1}{\lambda_-}$ and eigenvectors $v_\pm$. An example of such an automorphism is
$A(x,y)=(2x+y, x+y)$, with eigenvalues $\lambda_-=\frac{3-\sqrt{5}}{2}<1$ and
$\lambda_+=\frac{3+\sqrt{5}}{2}>1$. 

Following \cite{ArnoldAvez1967}, we define a suspension of the diffeomorphism $T$ by using the following relation on $\T^2\times \R$ (writing $\xi=(x,y)$):
$$\forall (\xi,z)\in\T^2\times \R, (\xi,z)\sim F(\xi,z) := (A\xi, z-1).$$
Denote by $\alpha_\pm$ the linear forms on $\R^2$ such that $\alpha_\pm(v\pm)=1$ and $\alpha_\pm(v_\mp)=0$. Observe that $\alpha_\pm\circ A=\lambda_\pm\alpha_\pm$. Rescale the forms $\alpha_\pm$ in the $z$-direction in order to get $F$-invariant forms on $\T^2\times \R$: define 
$$\beta_\pm(\xi,z)=\big(\lambda_\pm\big)^z\alpha_\pm(\xi),$$ 
so that
$$F^*\beta_\pm=\big(\lambda_\pm\big)^{z-1}\alpha_\pm\circ A=\big(\lambda_\pm\big)^z\alpha_\pm=\beta_\pm.$$
Hence $\beta_\pm$ is $F$-invariant and defines a 1-form on 
the quotient manifold $\cN=(\T^2\times\R)/\sim$. We use the same notation for these 1-forms. Then
  \begin{equation}\label{Edbeta}
    d\beta_\pm=\ln\lambda_\pm \, dz\wedge\beta_\pm.
  \end{equation}
  We consider the vector field $X=(0, 0, 1)$ on $\cN$. The lift of its flow to $\T^2\times \R$ is defined by $$\widetilde{\Phi_t}(\xi, z)=(\xi, z+t)$$
  hence the first return map to $\{z=0\}$ is $\Phi_1(\xi, 0)=(A\xi, 0)$ and is conjugate to $A$. The flow $(\Phi_t)$ is a suspension of $A$ and has the same Lyapunov exponents as $A$.
  
 We endow the manifold $\cM=\cN\times\R$ with the 1-form
 $$\Lambda=\beta_-+s\beta_+$$
 where $s$ is the $\R$-coordinate. We define 
 $\Omega=d\Lambda$. By \eqref{Edbeta}, we have $$\Omega=d\beta_-+ds\wedge \beta_++sd\beta_+=dz\wedge (\ln\lambda_-\beta_-+s\ln\lambda_+\beta_+)+ds\wedge\beta_+.$$
 Thus $\Omega^{\wedge 2}= 2\ln\lambda_- dz\wedge \beta_+\wedge ds \wedge \beta_+\not=0$ and $\Omega$ is a symplectic form.
 
 We define on $\cM$ the vector field $Y=X+2\ln \lambda_-\partial s$. Its flow is
 $$\psi_t(\xi, z, s)=(\Phi_t(\xi, z), \big(\lambda_-\big)^{2t}s).$$
 Hence $\cN\times \{ 0\}$ is the global attractor for $(\psi_t)$.
We have 
 \begin{align*}
 \psi_t^*\Omega=&dz\wedge\Big(\ln\lambda_-.\big(\lambda_-\big)^t\beta_-+\big(\big(\lambda_-\big)^{2t}.s\big)\ln\lambda_+.\big(\lambda_+\big)^t\beta_+\Big)+\\
 &\big(\lambda_-\big)^{2t}ds\wedge\big(\lambda_+\big)^t\beta_+.
 \end{align*}
 As $\lambda_-\lambda+=1$, we finally obtain
 $$\psi_t^*\Omega=\lambda_-^t\Omega.$$
\end{proof}

There are also examples of conformal symplectic diffeomorphisms on a non-exact symplectic manifold that have a non-isotropic invariant submanifold on which the restricted dynamics is Anosov.
\begin{proposition}\label{Pnonisotrop}
  There exists a conformal symplectic diffeomorphism $f$ on a
  6-dimensional symplectic manifold $({\cM}, \omega)$,with a
  4-dimensional invariant submanifold ${\cL}$ (hence ${\cL}$ is not
  isotropic).

  Moreover, the submanifold ${\cL}$ is the global attractor for $f$,
  $f_{|{\cL}}$ is conjugated to a hyperbolic automorphism of $\T^4$
  with $2$-dimensional stable and unstable foliations, and
  $f_{|{\cL}}$ is transitive with entropy equal to $- \log a$, where
  $a$ is the conformality ratio of $f$.
\end{proposition}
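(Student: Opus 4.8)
The plan is to build a discrete‑time analogue of the construction in Proposition \ref{Pflownonisotrop}, with the suspension direction suppressed. The first observation is a structural constraint: since I intend $f_{|\cL}$ to be a hyperbolic automorphism of $\T^4$, it preserves a Lebesgue volume form on $\cL$, so it cannot multiply a \emph{symplectic} form on $\cL$ by a factor $a\neq 1$; hence the restriction $\omega_{|\cL}$ must be degenerate, of rank exactly $2$. This dictates the shape of the example: the rank‑$2$ "symplectic" part of $\omega_{|\cL}$ has to sit on the stable directions of $\cL$ (so that $f$ scales it by $a<1$), while the unstable directions of $\cL$ must be paired off against the two transverse, contracted directions of $\cM$.

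Concretely, I would take $A\colon\T^2\righttoleftarrow$ to be the cat map used in the proof of Proposition \ref{Pflownonisotrop}, with eigenvalues $0<\lambda_-<1<\lambda_+=1/\lambda_-$ and dual eigen‑$1$‑forms $\alpha_\pm$, so that $A^*\alpha_\pm=\lambda_\pm\alpha_\pm$ and $d\alpha_\pm=0$ on $\T^2$. Set $\cL=\T^2\times\T^2$, write $\alpha_\pm^{(1)},\alpha_\pm^{(2)}$ for the pullbacks of $\alpha_\pm$ by the two projections, and on the $6$‑dimensional manifold $\cM=\cL\times\R^2$, with transverse coordinates $(u,v)$, put
$$\omega=\alpha_-^{(1)}\wedge\alpha_-^{(2)}+\alpha_+^{(1)}\wedge du+\alpha_+^{(2)}\wedge dv .$$
One checks at once that $\omega$ is closed (each factor is closed) and that $\omega^{\wedge 3}$ is a volume form, so $(\cM,\omega)$ is symplectic; and since $[\omega]$ restricts on $\cL\times\{0\}$ to the nonzero class $[\alpha_-^{(1)}\wedge\alpha_-^{(2)}]\in H^2(\T^4)$, the form $\omega$ is not exact — which is what this example is meant to exhibit. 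Then I would define $f(\theta_1,\theta_2,u,v)=(A\theta_1,A\theta_2,\lambda_-^3u,\lambda_-^3v)$. Using $A^*\alpha_\pm=\lambda_\pm\alpha_\pm$ and $\lambda_-\lambda_+=1$, each of the three terms of $\omega$ is multiplied by $\lambda_-^2$ under $f^*$ (for the two mixed terms because $\lambda_+\cdot\lambda_-^3=\lambda_-^2$), so $f^*\omega=a\,\omega$ with $a=\lambda_-^2\in(0,1)$: $f$ is conformal symplectic of ratio $a$.

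It then remains to read off the stated properties. The submanifold $\cL\times\{0\}$ (which I rename $\cL$) is $f$‑invariant; being $4$‑dimensional in the $6$‑dimensional $\cM$ it is automatically non‑isotropic; and since the transverse directions are contracted by the factor $\lambda_-^3<1$, it is the global attractor — it is compact and invariant, it attracts every orbit, and the projection to $\R^2$ of any compact invariant set must be $\{0\}$. The restriction $f_{|\cL}=A\times A$ is a hyperbolic automorphism of $\T^4$ whose stable and unstable foliations are the products of those of $A$, hence $2$‑dimensional; it is topologically transitive because none of its eigenvalues ($\lambda_\pm$, each of multiplicity $2$) is a root of unity; and its topological entropy is $h_{\mathrm{top}}(A)+h_{\mathrm{top}}(A)=2\log\lambda_+=-2\log\lambda_-=-\log a$.

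I do not expect a real obstacle: the content is a construction, and the work is entirely routine once the rank constraint above is recognized. The only points needing care are the verification that the explicit $\omega$ is a non‑exact symplectic form, and the bookkeeping that pins the transverse contraction rate at $\lambda_-^3=\lambda_-^2/\lambda_+$, so that the two "mixed" terms of $\omega$ scale by the same factor $a$ as the "dissipative" term $\alpha_-^{(1)}\wedge\alpha_-^{(2)}$. As in the flow case, one may afterwards replace $\R^2$ by any $f$‑invariant disk bundle, rescale to reach any prescribed positive entropy, or take a product with a conformal system carrying a periodic orbit to get an invariant submanifold which is neither isotropic nor coisotropic.
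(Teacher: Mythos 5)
Your proposal is correct and takes essentially the same route as the paper: the paper's example is $\cM=\{r_2=p r_1,\ r_4=p r_3\}\subset\T^4\times\R^4$ with $\omega=\pi^*\big((d\theta_2-p\,d\theta_1)\wedge(d\theta_4-p\,d\theta_3)\big)$ plus the restriction of the canonical form of $T^*\T^4$, and $f(\theta,r)=(T\theta,\lambda^3 r)$, which in fiber coordinates is exactly your $\omega=\alpha_-^{(1)}\wedge\alpha_-^{(2)}+\alpha_+^{(1)}\wedge du+\alpha_+^{(2)}\wedge dv$ on $\T^4\times\R^2$ with the same map and the same ratio $a=\lambda_-^2$. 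Your verifications (closedness, $\omega^{\wedge 3}\neq 0$, non-exactness via the restriction to $\T^4\times\{0\}$, attractor, transitivity and entropy $-\log a$) all check out.
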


\begin{question*}
 In our example we have
    $a=\left(\frac{3+\sqrt{5}}{2}\right)^2$. In fact we can replace
    this number by the square of the largest eigenvalue of any Anosov
    automorphism of $\T^2$. We don't know if we can achieve other
    constants by a conformal symplectic diffeomorphisms  of the same symplectic manifold.
\end{question*}

\begin{proof}
  We consider the hyperbolic toral automorphism
  $T:\T^2\rightarrow \T^2$ that is defined by
  $T(\theta_1, \theta_2)=(2\theta_1+\theta_2, \theta_1+\theta_2)$. The
  associated linear map has eigenvalues $\lambda=\frac{3-\sqrt{5}}{2}<1$ and
  $\lambda^{-1}=\frac{3+\sqrt{5}}{2}>1$. Let $p = \frac{\sqrt{5}-1}{2}$. The unstable
  direction is spanned by $(1, p)$ and the stable
  one by $(1, -\frac{1}{p})$. The topological entropy is
  $-\log \lambda$ (see \cite{HaKa1995}).
 
  Then the product map
  $F=(T, T):(\theta_1, \theta_2, \theta_3, \theta_4)\in\T^2\times
  \T^2\mapsto (T(\theta_1, \theta_2), T(\theta_3, \theta_4))$ has   topological entropy equal to
  $-2\log \lambda$. We endow $\T^4$ with the
  closed 2-form $\Omega$ that is defined by
  $$\Omega=(d\theta_2 - p \, d \theta_1)\wedge
  (d\theta_4 - p \, d\theta_3).$$ 
  Observe that the kernel of $\Omega$ is the direction of the unstable
  foliation. Obviously,
  $F^*\Omega= \lambda^2\, \Omega$.  Now, we
  consider the subbundle
  $${\cM}=\left\{ (\theta, r)\in \T^4\times \R^4;
  r_2= p r_1\quad\text{and}\quad
  r_4=p  r_3\right\}$$ of $\T^4\times\R^4$.  This bundle
  corresponds to the tangent bundle to the unstable foliation in the
  identification of $T\T^4$ with $\T^4\times \R^4$.
  
  We denote by $\Omega_1$ the closed 2-form on ${\cM}$ that is equal to
  $\pi^*\Omega$ where $\pi: (\theta, r)\in{\cM}\mapsto \theta\in\T^4$
  and by $\Omega_2$ the restriction of the usual symplectic form
  $d\theta\wedge dr$ of $T^*\T^4$ to ${\cM}$:
  \begin{itemize}
  \item $\Omega_1=(d\theta_2-p \, d\theta_1)\wedge
    (d\theta_4-p\, d\theta_3);$  
  \item
    $\Omega_2=\frac{1}{5}(d\theta_2+ \frac{1}{p} d\theta_1)\wedge (dr_2+ \frac{1}{p}dr_1)+
    \frac{1}{5}(d\theta_4+ \frac{1}{p}d\theta_3)\wedge
    (dr_4+\frac{1}{p} dr_3).$
  \end{itemize}
  Let then $\omega=\Omega_1+\Omega_2$ be the chosen symplectic form on
  ${\cM}$.

  If we define $f~:{\cM}\rightarrow {\cM}$ by
  $f(\theta, r)=(T(\theta),\left(\frac{3-\sqrt{5}}{2}\right)^3r)$,
  then we have
\begin{itemize}
\item $f^*\Omega_1= \pi^*F^*\Omega =
  \lambda^2\Omega_1$;  
\item
  $f^*\Omega_2 = \frac{\lambda^3}{\lambda} \Omega_2 =
  \lambda^2\Omega_2$.
\end{itemize}
So finally $f:{\cM}\rightarrow {\cM}$ is a conformal symplectic
diffeomorphism such that
$f^*\omega=\lambda^2\omega$ and
$f^*(\T^4\times \{ 0\})=\T^4\times \{ 0\}$, where $\T^4\times \{ 0\}$
is not isotropic and the topological entropy of
$f_{|\T^4\times \{ 0\}}$ is $-2\log \lambda$.
\end{proof}

Given these counter-examples to isotropy, we start with the case of an
invariant surface (2-dimensional submanifold).

\begin{proposition}
  If a closed $C^1$ surface ${\cL}$ is invariant by a conformal { and non symplectic} $C^1$
  diffeomorphism of $({\cM}, \omega)$, ${\cL}$ is $\omega$-isotropic.

\end{proposition}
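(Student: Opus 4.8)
The plan is to use the \emph{growth of area} argument alluded to in the introduction: the $2$-form $\omega$ itself defines a notion of "area" on the $2$-dimensional submanifold $\cL$, and since $f$ scales this area by $a\neq1$ while mapping the \emph{compact} manifold $\cL$ onto itself, that area must vanish. Since $\dim\cL=2$, $\omega$-isotropy of $\cL$ means precisely that the induced $2$-form $\mu:=\omega_{|\cL}$ vanishes identically, so this is exactly what I will prove.

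First I would set up the relevant measure-theoretic object. Since $\omega$ is smooth and $\cL$ is a closed $C^1$-surface, $\mu=\omega_{|\cL}$ is a continuous $2$-form on $\cL$. Rather than integrating $\mu$ directly (which, when $\cL$ is orientable, only yields $\int_\cL\mu=0$ and is \emph{not} enough to conclude), I would work with the associated nonnegative $1$-density $|\mu|$: in any $C^1$-chart in which $\mu=h\,dx\wedge dy$ one sets $|\mu|=|h|\,|dx\wedge dy|$, and these glue to a well-defined continuous density on $\cL$ regardless of orientability, hence can be integrated over $\cL$. The transformation law is then immediate: because $f(\cL)=\cL$, the restriction $g:=f_{|\cL}\colon\cL\to\cL$ is a $C^1$-diffeomorphism, and $f^*\omega=a\,\omega$ with $a>0$ gives $g^*\mu=a\,\mu$, hence $g^*|\mu|=a\,|\mu|$.

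Finally, integrating over $\cL$ and applying the change-of-variables formula for the $C^1$-diffeomorphism $g$ of $\cL$ onto itself,
$$\int_\cL|\mu|=\int_\cL g^*|\mu|=a\int_\cL|\mu|.$$
As $\cL$ is compact this integral is finite, and as $f$ is non symplectic, $a\neq1$; therefore $\int_\cL|\mu|=0$. Since $|\mu|$ is continuous and nonnegative, this forces $|\mu|\equiv0$, i.e. $\omega_{|\cL}=0$, which is the desired isotropy. There is essentially no obstacle here beyond two points of care, which I would flag explicitly: one must use the absolute value $|\omega_{|\cL}|$ rather than $\omega_{|\cL}$ itself (so that the quantity is monotone under $f$ and positivity can be exploited), and one should phrase the argument with densities (or, equivalently, pass to the orientation double cover of $\cL$) so that it is valid whether or not $\cL$ is orientable; the low regularity is harmless since the ordinary $C^1$ change-of-variables formula suffices.
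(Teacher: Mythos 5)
Your proof is correct and rests on the same area-growth idea as the paper's: compactness of $\cL$ makes the total $\omega$-area finite, while conformality scales it by $a\neq 1$, forcing it to vanish. The packaging differs slightly: the paper bounds the signed integrals $\int_U\omega$ uniformly over all open subsets $U\subset\cL$ via a finite atlas and then lets $n\to\pm\infty$ in $\int_{f^nU}\omega=a^n\int_U\omega$, whereas you integrate the unsigned density $\vert\omega_{|\cL}\vert$ over all of $\cL$ and apply the change-of-variables formula once, getting $I=aI$ directly; your use of the density neatly handles non-orientability and the possible cancellation in the signed global integral, so both routes are valid and yours is, if anything, a little more economical.
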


\begin{proof}
  We have $f^*\omega=a\, \omega$ for some $a\neq 1$.  Let ${\cL}$
  be an invariant 2-dimensional submanifold of $f$. We choose a
  finite atlas $\cA =\{ (U_i, \Phi_i) \}_{1\leq i\leq N}$ of
  ${\cL}$.  Endow ${\cL}$ with a Riemannian metric and define
  $$
  \begin{cases}
    \|\omega\|_{\cL,\infty}=\sup_{x\in{\cL}, u, v\in T_x{\cL}\setminus\{
    0\}}\frac{\vert\omega (u,v)\vert}{\| u\| \, \| v\|}\\
    \|D\Phi_i^{-1}\|_{\cL,\infty} = \sup_{u \in T\cL \setminus \{0\}} 
  \frac{\|D\Phi_i^{-1}(u)\|}{\|u\|}.
  \end{cases}$$  
  Then,
  $\int_U \omega$ is bounded over open subsets $U$ of $\cL$:
  \begin{equation}\label{E1}
    \left|\int_U\omega\right| \leq\sum_{i=1}^N\int_{\Phi_i(U_i)} \|\omega\|_{\cL,\infty} \, \|
    D\Phi_i^{-1}\|_{\cL,\infty}^2.d\text{Leb}
  \end{equation} 
  Now, let $U$ be an open set of $\cL$ and $n \in
  \Z$. Since $f^nU$ is an open subset of $\cL$ and 
  $$\int_{f^nU}\omega=a^n\int_U\omega,$$
  we see that $\int_U \omega$ must be zero. Thus $\cL$ is isotropic.
\end{proof}

If $\cL$ has any dimension, the same conclusion holds provided some
constraint on the topological entropy $\ent (f_{|\cL})$ of the
dynamics carried by $\cL$.  Define the spectral
radius of a self-map $g$ as 
\[\rad (Dg) = \limsup_{n\rightarrow+\infty} \| Dg^{n} \|_\infty^{\frac{1}{n}}.\]

\begin{proposition}\label{PYomdin}
  Let $f$ be a conformal diffeomorphism of a symplectic manifold
  $(\cM, \omega)$, i.e. such that $ f^*\omega=a\,\omega$ with
  $a\in ]0, 1[$.  Let ${\cL}$ be an %$\ell$-dimensional 
  invariant
  closed submanifold. Assume one of the following hypothesis.
  \begin{enumerate} 
  \item The diffeomorphism $f$ is smooth{, $\cL$ is
      smooth} and
    \[\ent(f_{|{\cL}}) < - \log(a);\]
  \item The diffeomorphism $f$ and $\cL$ are $C^r$ 
    for some $r\geq 1$ and
    \[\ent(f_{|{\cL}}) + \log^+\left(\text{\rm Rad}
        (Df^{-1}_{|{\cL}})^{2/r}\right) < -\log(a).\]%\marginpar{\color{blue} J'ai mis  $\frac{2}{r}$ au lieu de $\frac{\ell}{r}$ en exposant.}
  \end{enumerate}
  Then ${\cL}$ is $\omega$-isotropic.
\end{proposition}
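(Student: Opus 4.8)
The plan is to bound $\left|\int_U \omega\right|$ for an arbitrary open set $U \subset \cL$ and show it must vanish, exactly as in the surface case, but now controlling how the volume (more precisely, the integral of $\omega$) of $f^n(U)$ can grow. Since $f^*\omega = a\,\omega$ we have $\int_{f^n U}\omega = a^n \int_U \omega$, so $\left|\int_U\omega\right| = a^{-n}\left|\int_{f^n U}\omega\right|$, and it suffices to show that $\left|\int_{f^n U}\omega\right|$ grows subexponentially at rate strictly less than $-\log a$ — then letting $n\to\infty$ forces $\int_U \omega = 0$, whence $\cL$ is isotropic. To estimate $\left|\int_{f^n U}\omega\right|$ I would pick a finite atlas of $\cL$ and, working in charts, bound $\int_{f^n U}\omega$ by $\|\omega\|_{\cL,\infty}$ times the sum over charts of the $2$-dimensional Hausdorff-type content of the pieces of $f^n(U)$ — i.e. by something comparable to the $2$-dimensional volume growth of $f^n$ restricted to $\cL$. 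This is where Yomdin–Gromov theory enters.

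The key input is the Yomdin–Gromov estimate on the growth of the volume of iterated images: for a $C^r$ map $f$ of a compact manifold $\cL$, the exponential growth rate of the $k$-dimensional volumes of $f^n$-images of $k$-dimensional submanifolds is bounded by $\ent(f_{|\cL}) + \frac{k}{r}\log^+\rad(Df_{|\cL})$ — and by $\ent(f_{|\cL})$ alone in the $C^\infty$ case (Yomdin's theorem, as formulated by Gromov). Applying this with $k = 2$ (the relevant dimension for integrating a $2$-form) and, under hypothesis (2), with $Df^{-1}$ in place of $Df$ — because we are pushing forward by $f$ but the relevant distortion of the $2$-dimensional volume element is governed by $\|Df^{-1}\|$ through the change-of-variables bound analogous to \eqref{E1} with $D\Phi_i^{-1}$ replaced by the cocycle of $f^{-n}$ — gives
\[
\limsup_{n\to\infty}\frac1n\log\left|\int_{f^n U}\omega\right| \;\le\; \ent(f_{|\cL}) + \log^+\!\left(\rad(Df^{-1}_{|\cL})^{2/r}\right).
\]
Under hypothesis (1) the second term drops and the bound is just $\ent(f_{|\cL})$. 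In either case the hypothesis of the proposition makes this rate strictly smaller than $-\log a > 0$, so $a^{-n}\left|\int_{f^n U}\omega\right|\to 0$, completing the argument. Since $U$ was arbitrary, every $2$-form period of $\omega$ over $\cL$ vanishes on open sets, hence $\omega|_{\cL}\equiv 0$.

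The main obstacle is getting the Yomdin–Gromov volume-growth bound in precisely the form needed and matching the power of the derivative cocycle to the degree of the form: one must be careful that integrating a $2$-form is controlled by $2$-dimensional volume growth (not the full dimension of $\cL$), that the factor $2/r$ rather than $\dim\cL / r$ is what appears, and that it is the inverse derivative $Df^{-1}$ whose spectral radius governs the distortion when one writes $\int_{f^n U}\omega$ as an integral over $U$ of $(f^n)^*$ of a local volume form. I would quote \cite{Yom1987, Gromov1987} for the semialgebraic/reparametrization lemma and the resulting volume-growth inequality, and reduce hypothesis (2) to hypothesis (1) in the smooth case by letting $r\to\infty$. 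A secondary technical point is that $f$ need only be defined and conformal on $\cM$ while $\cL$ is compact and invariant, so all the sup-norms over $\cL$ in the chart estimates are finite; this keeps the chart bound \eqref{E1}-style argument uniform in $n$ up to the exponential factor we have just estimated.
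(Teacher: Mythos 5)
There is a genuine gap, and it is the direction of iteration. You push a set forward by $f^n$ and use the exact identity $\int_{f^nU}\omega=a^n\int_U\omega$; but then $a^{-n}\bigl|\int_{f^nU}\omega\bigr|$ is identically equal to $\bigl|\int_U\omega\bigr|$, a constant, so nothing can force it to tend to $0$ unless it already vanishes. Moreover your concluding step has a sign error: since $a^{-n}=e^{n(-\log a)}$, an upper bound $\bigl|\int_{f^nU}\omega\bigr|\leq Ce^{cn}$ would force $a^{-n}\bigl|\int_{f^nU}\omega\bigr|\to 0$ only if $c<\log a<0$, not if $c<-\log a$; and no Yomdin-type estimate can produce a rate below $\log a$, because $\ent$ and $\log^+$ are nonnegative. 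So the forward-iteration scheme yields no contradiction and no conclusion. The paper's argument runs the other way, by contradiction: if $\cL$ is not isotropic, choose a small $2$-dimensional surface $\cS\subset\cL$ whose tangent spaces meet the characteristic bundle of $\cL$ only at $0$, so that $\int_\cS\omega\neq 0$. Then $\int_{f^{-n}(\cS)}\omega=a^{-n}\int_\cS\omega$ grows at the positive rate $-\log a$, and since $|\omega|\leq k\,\vol$ on $\cL$ the $2$-dimensional Riemannian volume of $f^{-n}(\cS)$ must grow at rate at least $-\log a$. Yomdin's inequality applied to $f^{-1}_{|\cL}$ bounds this growth by $\ent(f_{|\cL})+\log^+\bigl(\rad(Df^{-1}_{|\cL})^{2/r}\bigr)$ (by $\ent(f_{|\cL})$ alone in the smooth case), contradicting the hypothesis. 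This is also the honest reason $Df^{-1}$ appears in the statement: it is the map actually being iterated, not a correction coming from a change of variables while iterating $f$, as your write-up suggests.

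A secondary flaw: when $\dim\cL>2$ the integral $\int_U\omega$ over an open subset $U$ of $\cL$ is not defined, so ``every $2$-form period over open sets vanishes'' is not the right formulation of isotropy; non-isotropy must be witnessed by a $2$-dimensional piece of surface in $\cL$ on which $\omega$ restricts to an area form, which is exactly the object the paper iterates. With these two corrections — work with such a surface and iterate $f^{-1}$ — your ingredients (the conformality identity, the comparison $|\omega|\leq k\,\vol$ on the compact invariant $\cL$, and the Yomdin--Gromov bound with exponent $2/r$) do assemble into the paper's proof.
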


\begin{proof}
   We assume that $\cL$ is invariant and not
    isotropic. There exists a constant $k>0$ such that on $\cL$, we
    have $\vert\omega\vert\leq k\vert \mathrm{vol}\vert$ where
    $\mathrm{vol}$ is the 2-dimensional volume form induced by the
    Riemannian metric. We
  % endow $\cL$ with a Riemannian metric $g$ and  
  choose in $\cL$ a small piece $\cS$ of symplectic surface (whose tangent space
  intersects the characteristic bundle of $\cL$ only in $0$). Then
  $\omega(f^{-n}(\cS))=a^{-n}\omega(\cS)\not=0$ and then
  \[\limsup_{n\rightarrow \infty} \frac{1}{n}\log\Big\vert \mathrm{vol}(f^{-n}
  (\cS)\Big\vert\geq  \lim_{n\rightarrow\infty}\frac{1}{n}\big(\log\vert
  \omega(f^{-n}(\cS))\vert-\log k\big)=-\log(a).\] 
  The conclusion follows from Yomdin's inequality, which we have recalled in appendix~\ref{AYomdin}.
\end{proof}

{ \begin{remark}
  This statement implies in particular that if $\cL$ is an invariant submanifold by a conformal flow $(\varphi_t)$ then
  \begin{itemize}
      \item if $\cL$ and $(\varphi_t)$ are $C^1$ and if $\varphi_{t|\cL}$ is $C^1$ conjugate to a rotation on a torus for some $t\not=0$, then $\cL$ is isotropic; indeed, in this case, the entropy vanishes and the spectral radius of $Df$ is 1. A simpler proof of this statement is given in \cite{CaCeLlA2013}.
       \item if $\cL$ and $(\varphi_t)$ are smooth and if $\varphi_{t|\cL}$ is $C^0$ conjugate to a rotation on a torus for some $t\not=0$, then $\cL$ is isotropic; indeed, in this case, the entropy vanishes. 
  \end{itemize}
\end{remark}}

\section{Entropy}\label{Sentropy}

  The purpose of this section is to improve regularity in  Proposition~\ref{PYomdin}. We will start by giving an abstract result on a manifold endowed with a form with constant rank and then we will give an application to invariant submanifolds of conformal symplectic dynamics.

%{\color{blue} Je me suis rendue compte que m\^eme dans le cas presymplectique on a besoin de $C^2$, car pour d\'efinir l'exponentielle pour la m\'etrique restreinte \`a la feuille il faut un champ de vecteurs $C^1$ et donc les feuilles doivent \^etre au moins $C^2$ pour avoir un flot $C^1$. Du coup dans l'application au cas conforme symplectique, pour que les feuilles soient $C^2$, il me semble qu'on doit supposer la sous-variété invariante $C^3$ (pour que le feuilletage caractéristique soit . }.

Let
\begin{itemize}
\item $\cN^{(n)}$ be a compact Riemannian $C^2$ manifold and $d$ its distance
\item $\cF$ be a $C^2$ foliation induced by a subbundle $F$ of $T\cN$ of rank $p{\leq n-1}$
\item $\Omega$ be an $(n-p)$-form on $\cN$ which induces a volume on
  submanifolds transverse to $\cF$
\item $f$ be a $C^1$-diffeomorphism of $\cN$ preserving $\cF$ and such
  that
  \[f^*\Omega = b \, \Omega\]
  for some $b>1$.
\end{itemize}

\begin{theorem}
  \label{thm:ent}%
  The topological entropy of $f$ satisfies
  \[\ent f \geq \ln b.\]
\end{theorem}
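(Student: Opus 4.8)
The goal is to bound the topological entropy of $f$ from below by $\ln b$, where $f$ multiplies a transverse volume form $\Omega$ by $b > 1$. The plan is to argue that the exponential volume growth of a piece of transverse submanifold forces entropy, via Yomdin's theory (which bounds volume growth above by entropy) — but with the crucial refinement that, because $\Omega$ is insensitive to directions along the leaves of $\cF$, we only need to control the geometry transversally. The first step is to fix a small $C^2$ embedded disk $\Sigma$ of dimension $n-p$ transverse to $\cF$, so that $\int_\Sigma \Omega \neq 0$. Since $f^*\Omega = b\,\Omega$ and $f$ preserves $\cF$, the images $f^n(\Sigma)$ are again transverse, and $\int_{f^n(\Sigma)} \Omega = b^n \int_\Sigma \Omega$. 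Comparing $\Omega$ with the Riemannian volume on transverse submanifolds (a bounded comparison on the compact $\cN$), we get
\[
\limsup_{n\to\infty} \frac{1}{n} \log \vol_{n-p}\big(f^n(\Sigma)\big) \geq \ln b,
\]
where $\vol_{n-p}$ is the $(n-p)$-dimensional Riemannian volume.

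The heart of the matter is then to convert this transverse volume growth into entropy. The naive application of Yomdin's inequality would require $f$ smooth and would produce a correction term $\frac{n-p}{r}\log^+\rad(Df)$; here $f$ is only $C^1$, so that route fails. Instead I would work with a fixed transverse disk and follow Yomdin's local argument adapted to the foliated setting: the key point is that along the leaf direction $f$ can stretch arbitrarily, but this stretching does not contribute to the growth of $\int \Omega$, so one reparametrizes $f^n(\Sigma)$ by composing with holonomy/projection along $\cF$ onto a fixed local transversal, reducing the estimate to the dynamics induced by $f$ on the (at least topologically, locally $C^1$) transverse quotient. On a small foliation box $U \cong V \times W$ with $V$ the transverse factor, $f$ induces a well-defined continuous map on local leaf spaces, and the growth of the transverse volume of $f^n(\Sigma)$ is governed by the local degree / covering multiplicity of these induced maps. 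Standard arguments (separated-set counting: an exponentially growing transverse volume forces exponentially many $(n,\epsilon)$-separated points) then give $\ent f \geq \ln b$. The technical engine here is essentially Yomdin's: one covers $f^n(\Sigma)$ by balls of radius $\epsilon$, notes that the number of such balls grows like $e^{n\ln b}$ by the volume estimate, and then — using only that $f$ is $C^1$ and the base manifold compact, so $\|Df\|$ is bounded — pulls these back along the orbit to produce the required separated set, taking advantage of the fact that distortion along leaves is irrelevant because we only measure $\Omega$.

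The main obstacle, and the part needing the most care, is exactly this last reduction: making precise that the $C^1$ (rather than $C^\infty$) regularity suffices once one quotients out the leaf directions. In the fully transverse, zero-leaf case ($p = 0$, $\Omega$ a genuine volume form of top degree), $f$ scales the total volume by $b > 1$, which is impossible on a compact manifold — so implicitly $p \geq 1$ and the foliation is genuinely present; the statement is really about the interplay. The delicate estimate is to show that the holonomy maps of $\cF$ along pieces of $f^n(\Sigma)$ have controlled $C^1$ norms uniformly in $n$ (they do, because $\cF$ is a fixed $C^2$ foliation on compact $\cN$, independent of the dynamics), so that projecting $f^n(\Sigma)$ transversally neither creates nor destroys volume growth beyond bounded factors. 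Once that is in hand, the entropy lower bound follows from the standard volume-growth-implies-entropy half of Yomdin's theorem (recalled in Appendix~\ref{AYomdin}), which, unlike the converse, requires no smoothness beyond $C^1$ and no spectral-radius correction. I would conclude by remarking that applying this with $\Omega = \omega^{\wedge \ell}$ restricted to an invariant submanifold on which $\omega$ has constant rank $2\ell$ — whose characteristic foliation plays the role of $\cF$ — yields the promised improvement of Proposition~\ref{PYomdin}.
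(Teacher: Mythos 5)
There is a genuine gap at the crucial step, namely the passage from transverse volume growth to an entropy lower bound. Your first step (choosing a transversal $\Sigma$ with $\Omega(\Sigma)\neq 0$ and getting growth at rate $\ln b$) matches the paper, but your conversion rests on two claims that do not hold as stated. First, the assertion that the ``volume-growth-implies-entropy half of Yomdin's theorem'' needs no smoothness beyond $C^1$ and no spectral-radius correction is false: Yomdin's inequality is exactly the direction $\logvol \leq \ent + \frac{s}{r}\log^+\rad(Df)$, and for $r=1$ the correction term is there and can swamp $\ln b$ --- eliminating it is precisely the point of this section, so it cannot be cited away. Second, the covering argument ``$\Omega(f^n(\Sigma))\sim b^n$, hence $e^{n\ln b}$ balls of radius $\epsilon$ are needed to cover $f^n(\Sigma)$'' does not follow: nothing in the volume estimate prevents the transverse volume from concentrating, with many sheets of $f^n(\Sigma)$ crossing the same plaques inside a single $\epsilon$-ball, so the number of covering balls need not grow at all; and even if it did, a count of static $\epsilon$-balls is not per se a lower bound for $\ent f$ --- one must count dynamical objects (itineraries or Bowen balls).

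The missing idea, which is the heart of the paper's proof, is a uniform bound on the transverse $\Omega$-volume of each \emph{itinerary piece} $f^{k}(\cS)\cap f^{k-1}(Q_{j_1})\cap\cdots\cap Q_{j_k}$, where the $Q_j$ are small cubes subordinate to foliation boxes. The paper shows inductively that such a piece meets each plaque of the relevant foliation box at most once, whence its $\Omega$-volume is bounded by a constant $M$ independent of $k$, and then $b^k|\Omega(\cS)|\leq N_k M$ forces the number $N_k$ of itineraries, hence the entropy, to grow at rate $\geq \ln b$. Establishing this multiplicity-one property is where the dynamics enters: one needs a Lipschitz comparison between the ambient distance and the leafwise distance $d_\cF$ (close to the paper's Lemma~\ref{Ldistleaf}), and, crucially, the uniform continuity of $f^{-1}$ with respect to $d_\cF$, which uses that $f$ preserves $\cF$ and that $\cN$ is compact. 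Your ``delicate estimate'' --- uniform $C^1$ control of the holonomy maps of the fixed foliation --- is a static statement and does not by itself prevent an itinerary piece from returning many times to the same plaque; without that dynamical multiplicity bound, neither the separated-set count nor the entropy inequality follows. So the overall strategy (exploit that $\Omega$ ignores leaf directions) is the right one, but as written the proof does not go through at its central step.
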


\begin{proof}
  Key to the proof is the refined distance $d_\cF$ on $\cN$ defined by
  \[d_\cF(x,y) =
  \begin{cases}
    \infty \quad \mbox{if $x$ and $y$ are not on the same leaf}\\
    \mbox{distance from $x$ to $y$ along their common leaf otherwise}.
  \end{cases}
  \]

  \begin{lemma}\label{Ldistleaf}
There exist $\varepsilon>0$ and  $K>0$ such that for every $x,y \in \cN$ 
  \begin{equation}
    \label{eq:dF}
    d_\cF(x,y)<\varepsilon \quad \Rightarrow d_\cF(x,y) \leq K d(x,y).
  \end{equation}
  \end{lemma}
Replacing the Riemannian metric $d$ by $\frac{1}{\varepsilon}d$, we will asssume that $\varepsilon=1$. 
\begin{proof}[Proof of Lemma \ref{Ldistleaf}]
We choose $\varepsilon>0$ that is strictly less than the radius of injectivity of the metric $d$ restricted to every leaf and introduce
$$\cD=\{ (x, y)\in\cN\times\cN; d_{\cF}(x, y)\leq\varepsilon\}.$$
This set is closed and due to our choice of $\varepsilon$, $d_\cF$ is continuous on $\cD$. If we use the notation 
$$\Delta=\{ (x, x); x\in\cN\},$$
then the continuous function  $\frac{d_\cF}{d}$ is bounded on the complement of every neighbourhood of $\Delta$ in $\cD$.

The exponential maps for the Riemannian form $g$ and for the Riemannian form $g_\cF$ restricted to the leaves are tangent along the tangent bundle to the leaves, hence
$$\lim_{(x, y)\rightarrow \Delta}\frac{d_\cF(x, y)}{d(x,y)}=1.$$
\end{proof}
  
  For every $x \in \cN$, let $\cU_x^{(n-p)}$ be a submanifold through
  $x$ of dimension $n-p$, transverse to $\cF$. Let $\cV_x$ be a
  tubular neighborhood of $\cU_x$, of the form
  \[\cV_x = \cup_{y\in \cU_x} \{z\in \cN, \; d_\cF(y,z)<\e_x\}.\]
  We choose $\cU_x$ and $\e_x<1$ small enough so that $\cV_x$ has
  a product structure. Furthermore, let 
  \[\cW_x = \cup_{y\in \cU_x} \{z\in \cN, \; d_\cF(y,z)<\e_x/2\}.\]
  Let $\cF_{\cW_x}$ be the foliation induced on $\cW_x$ by $\cF$. (Due
  to the product structure, leaves of $\cF_{\cW_x}$ are of the form
  $\cW_x \cap L_x$, where $L_x$ is the leaf through $x$ of the
  foliation induced on $\cV_x$.) The neighborhood $\cW_x$ has the
  property that for any two points $y$ and $z$ of $\cW_x$, if
  $d_\cF(y,z) <\e_x/2$ then $y$ and $z$ must belong to the same leaf
  of $\cF_{\cW_x}$; indeed, if $y$ and $z$ do not lie on the same leaf
  of $\cF_{\cW_x}$, their distance must be $\geq\e_x$ since any path
  from $y$ to $z$ along a leaf of $\cF$ runs twice across
  $\cV_x \setminus \cW_x$.

  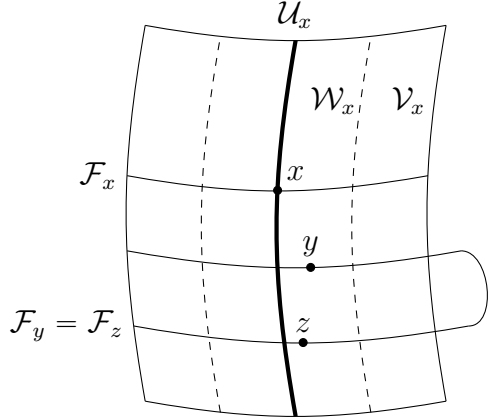
\begin{figure}[!htbp]
    \centering
    \begin{tikzpicture}[scale=1]
        \draw (0,0) to[out=100,in=260] (0,5);
        \draw (0,5) to[out=-10,in=190] (4,5);
        \draw (4,0) to[out=100,in=260] (4,5);
        \draw (0,0) to[out=-10,in=190] (4,0);

        \draw[dashed] (1,-0.15) to[out=100,in=260] (1,4.85);
        \draw[dashed] (3,-0.15) to[out=100,in=260] (3,4.85);

        \draw[ultra thick] (2,-0.2) to[out=100,in=260] (2,4.8) node[anchor=south] {$\cU_x$};
        \draw (-0.24,3) node[anchor=east] {$\cF_x$} to[out=-10,in=190] (3.76,3);

        \draw (-0.15,1) node[anchor=east] {$\cF_y=\cF_z$} to[out=-10,in=190] (4.3,1);
        \draw (-0.24,2) to[out=-10,in=190] (4.2,2);
        \draw (4.3,1) to[out=0,in=0] (4.2,2);

        \filldraw (1.76,2.805) circle (1.5pt);
        \node[align=right,above] at (2,2.8) {$x$};

        \filldraw (2.2,1.78) circle (1.5pt) node[above] {$y$};
        \filldraw (2.1,0.78) circle (1.5pt) node[above] {$z$};

        \node at (2.5,4) {$\mathcal{W}_x$};
        \node at (3.5,4) {$\mathcal{V}_x$};
    \end{tikzpicture}
    \caption{Construction of the finite covering of $\cN$}
  \end{figure}
  
Let $\cW_{x_1}$, ..., $\cW_{x_I}$ be a finite subcovering of $\cN$. Denote $\cW_{x_i}$ by $\cW_i$, and let $\e = \min_i \e_{x_i}/2$. So, the following property holds:

  \begin{itemize}
  \item[(*)] For every $i =1,...,I$ and $y,z \in \cW_i$ such that
    $d_\cF(y,z)< \e$, $y$ and $z$ belong to the same leaf of the
    foliation $\cF_{\cW_i}$ induced by $\cF$ on $\cW_i$.
  \end{itemize}
  
  Moreover, since $f^{-1}$ and $\cF$ are continuous and $f$ preserves
  $\cF$, there exists $\eta < \e$ such that

  \begin{itemize}
  \item[(**)] For every $x,y \in \cN$ such that $d_\cF(x,y) < \eta$,
    $d_\cF(f^{-1}x, f^{-1}y) < \e$.
  \end{itemize}
  
  According to Lebesgue   covering lemma, there exists
  $\theta < \eta/K$ such that every ball of radius $\theta$ is
  inside at least one of the $\cW_{i}$'s.

  Let $(Q_j)_{1 \leq j \leq J}$ be a decomposition of $\cN$ into cubes
  (or compact submanifolds with boundaries) such that each cube is
  contained in a ball of radius $< \theta$.

  Let $\cS$ be a submanifold of $\cN$ of dimension $n-p$, included into some cube $Q_j$ and transverse to $\cF$. $\cS$ must lie into some $\cW_{i}$. For any $\cW_i$ containing $\cS$, $\cS$ meets each leaf of $\cF_{\cW_i}$ at isolated points. By narrowing $\cS$, we may assume that $\cS$ meets each leaf of $\cF_{\cW_{i}}$ at one point at most.

  We claim that

  \begin{itemize}    
  \item[(***)] For every $k$ and $j_1,...,j_k \in \{1,...,J\}$, 
    \[f^{k}(\cS) \cap f^{k-1}(Q_{j_1}) \cap \cdots \cap Q_{j_k}\]
    meets each leaf of any $\cW_i$ containing $Q_{j_k}$ at one point at most.
  \end{itemize}
  
  Let $j \in \{1,...,J\}$. Then $\cS' = f(\cS) \cap Q_j$
  is also transverse to the foliation. Let $x,y \in \cS'$ be on a
  common leaf of $\cF_{\cW_{i}}$, with $Q_j \subset \cW_{i_0}$. Since such
  leaves have a diameter $<1$ (due to our choice $\e_x<1$),
  using~\eqref{eq:dF}\footnote{Recall the metric was changed in order to have $\varepsilon=1$ in \eqref{eq:dF}.}, we see that
  \[d_\cF(x,y) \leq K d(x,y) \leq K \mathrm{diam}\, Q_j \leq K \theta
  \leq \eta.\]
  Using (**), $d_\cF(f^{-1}x,f^{-1}y) < \e$. But using (*), $f^{-1}x$
  and $f^{-1}y$ belong to the same leaf of $\cF_{\cW_{i_0}}$. So, by the
  constructing property of $\cW_{i_0}$, $f^{-1}x=f^{-1}y$ and $x=y$. By
  induction, (***) holds.

  If $\cS \subset \cW_i$,  we have
  \[\left|\Omega\left(f^k(\cS) \cap f^{k-1}(Q_{j_1}) \cap \cdots \cap Q_{j_k} \right) \right| \leq \max \{|\Omega(\cU_1)|, ...,
  |\Omega(\cU_I)|\} = M,\] uniformly with respect to $k$. Let
  \[N_k = \sharp \large\{(j_1,...,j_k), \; f^k(\cS) \cap
  f^{k-1}(Q_{j_1}) \cap ... \cap Q_{j_k} \neq \emptyset\large\}.\]
  Then
  \[b^k\, |\Omega(\cS)| \leq N_k M,\]
  hence
  \[\frac{1}{k} \ln N_k \geq \frac{1}{k} \ln
  \frac{|\Omega(\cS)|}{M} + \ln b,\]
  hence the wanted inequality.
\end{proof}

Now assume that $\omega$ is a presymplectic form\footnote{A
  presymplectic form is a a closed $2$-form with constant rank.} of $\cN$ of (even) rank $2\ell{ \geq 2}$ and
\[f^*\omega = a \, \omega, \quad a >1.\]
The kernel of $\omega$ is a uniquely integrable subbundle $F$ of corank $2\ell$. Setting $\Omega=\omega^\ell$ and $b=a^\ell$
brings us back to the prior setting.

\begin{corollary}
  \label{cor:ent}%
  The topological entropy of $f : \cN \righttoleftarrow$ satisfies
  \[\ent f \geq \frac{\rank (\omega)}{2} \ln a.\]
\end{corollary}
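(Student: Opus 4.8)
The plan is to show that the triple $(\cF,\Omega,b)$ obtained by taking $\cF$ to be the characteristic foliation of $\omega$ (the one integrating $F=\ker\omega$), $\Omega=\omega^{\ell}$, and $b=a^{\ell}$, puts us exactly in the situation of Theorem~\ref{thm:ent}; the corollary then follows at once, since $\ell\ln a=\tfrac12\rank(\omega)\ln a$. So the work reduces to checking the four hypotheses of that theorem.

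First I would argue that $F=\ker\omega$ is a $C^{2}$ subbundle of rank $p=n-2\ell$. It has constant rank because $\omega$ does, and it inherits the regularity of $\omega$, being the kernel of a constant-rank bundle map; in the intended applications $f$, $\cN$, and hence $\omega$, are $C^{3}$, so the resulting foliation is $C^{2}$ as required, and $p=n-2\ell\le n-2\le n-1$ because $2\ell\ge 2$. Unique integrability of $F$ comes from $d\omega=0$: for local sections $X,Y$ of $F$ and any vector field $Z$, all terms but one in the expansion of $d\omega(X,Y,Z)$ vanish since $\omega(X,\cdot)=\omega(Y,\cdot)=0$, leaving $0=d\omega(X,Y,Z)=-\omega([X,Y],Z)$, so $[X,Y]\in\ker\omega=F$; Frobenius gives the foliation $\cF$. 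Next, $f$ preserves $\cF$: from $\omega_{f(x)}(Df_x u,Df_x v)=a\,\omega_x(u,v)$ and $Df_x$ being an isomorphism, $Df_x$ sends $\ker\omega_x=F_x$ onto $\ker\omega_{f(x)}=F_{f(x)}$, hence maps leaves onto leaves. Then $\Omega=\omega^{\ell}$ restricts to a volume form on every $2\ell$-dimensional submanifold $\cS\subset\cN$ transverse to $\cF$: transversality and the count $\dim\cS+\rank F=n$ give $T_x\cN=T_x\cS\oplus F_x$, and if $v\in T_x\cS$ is $\omega$-orthogonal to $T_x\cS$ then, $\omega(v,\cdot)$ vanishing also on $F_x$, we get $v\in\ker\omega=F_x$, so $v\in T_x\cS\cap F_x=\{0\}$; hence $\omega|_{T_x\cS}$ is symplectic and $\omega^{\ell}|_{\cS}$ is a volume form. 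Finally $f^{*}\Omega=(f^{*}\omega)^{\ell}=(a\omega)^{\ell}=a^{\ell}\Omega=b\,\Omega$ with $b=a^{\ell}>1$.

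With these checks in place, Theorem~\ref{thm:ent} yields $\ent f\ge\ln b=\ell\ln a=\tfrac{\rank(\omega)}{2}\ln a$. I do not expect a genuine obstacle here: the real content is Theorem~\ref{thm:ent}, which is already proved, and the reductions above are formal. The only points that need a little care are the nondegeneracy argument showing that $\omega^{\ell}$ induces a transverse volume, and the regularity of the characteristic foliation, which is precisely what pins down the $C^{2}$ (hence, through $\omega$, the $C^{3}$) hypotheses appearing in the applications. One may also note in passing that the extreme case $2\ell=n$ cannot occur, since a symplectic form on a closed manifold admits no diffeomorphism with $f^{*}\omega=a\omega$, $a>1$, as the Liouville volume would be strictly expanded.
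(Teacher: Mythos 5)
Your proposal is correct and is essentially the paper's own argument: the paper likewise sets $\Omega=\omega^{\ell}$, $b=a^{\ell}$ and invokes Theorem~\ref{thm:ent}, with the Frobenius/involutivity point ($d\omega=0$ giving $[X,Y]\in\ker\omega$) appearing there in the proof of Corollary~\ref{CCisoento}. Your extra verifications (that $f$ preserves the characteristic foliation, that $\omega^{\ell}$ is a transverse volume, and the remark excluding $2\ell=n$ on a compact $\cN$) are sound details the paper leaves implicit.
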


Let us now return to our usual setting, where  $(\cM,\omega)$ is a symplectic manifold.

\begin{corollary}\label{CCisoento}
    Let $f:\cM\righttoleftarrow$ be a $C^3$ conformal symplectic diffeomorphism such that $f^*\omega=a\omega$ with $a>1$. Suppose that $\cN$ is an invariant $C^3$ submanifold such that the induced form $\omega_{\vert \cN}$ on $\cN$ has constant rank. Then
    \[\ent f_{\vert\cN} \geq \frac{\rank (\omega_{\vert \cN})}{2} \ln a;\]
    in particular, if the entropy of $f_{|\cN}$ vanishes, $\cN$ is isotropic. 
    
   %Then $\cN$ is isotropic. This result applies e.g. to an invariant submanifold on which the dynamics is minimal (every orbit is dense) and the entropy vanishes. 
\end{corollary}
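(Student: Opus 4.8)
The plan is to derive the inequality as a direct application of Corollary~\ref{cor:ent}, taking $\cN$ itself as the ambient manifold and the restriction $\omega_{|\cN}=j^{*}\omega$ as the presymplectic form, where $j\colon\cN\hookrightarrow\cM$ is the inclusion. If $\rank(\omega_{|\cN})=0$ there is nothing to prove: the asserted bound reads $\ent f_{|\cN}\geq 0$ and $\cN$ is $\omega$-isotropic by definition. So assume $2\ell:=\rank(\omega_{|\cN})\geq 2$.

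Next I would verify that the hypotheses of Theorem~\ref{thm:ent} (hence of Corollary~\ref{cor:ent}) hold for $(\cN,\omega_{|\cN},f_{|\cN})$. Since $\omega$ is smooth and $\cN$ is $C^{3}$, the form $\omega_{|\cN}$ is $C^{2}$ (one derivative being lost in $j^{*}$), it is closed, and by hypothesis it has constant rank $2\ell$; thus it is a $C^{2}$ presymplectic form, $\cN$ carries a $C^{2}$ Riemannian metric, its kernel $F$ is a $C^{2}$ subbundle of $T\cN$ of corank $2\ell$, and closedness of $\omega_{|\cN}$ makes $F$ involutive (it is the characteristic distribution), so Frobenius integrates it to a $C^{2}$ foliation $\cF$. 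Because $\cN$ is invariant, $f_{|\cN}$ is a $C^{3}$ (in particular $C^{1}$) diffeomorphism of $\cN$, and from $j\circ f_{|\cN}=f\circ j$ we obtain $(f_{|\cN})^{*}\omega_{|\cN}=j^{*}f^{*}\omega=a\,\omega_{|\cN}$; hence $Df$ preserves $\ker\omega_{|\cN}$ and $f_{|\cN}$ preserves $\cF$. The relation $(f_{|\cN})^{*}\omega_{|\cN}=a\,\omega_{|\cN}$ with $a>1$ is exactly the conformality hypothesis required.

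Corollary~\ref{cor:ent} (whose proof sets $\Omega=(\omega_{|\cN})^{\wedge\ell}$, $b=a^{\ell}>1$, and invokes Theorem~\ref{thm:ent}) then gives $\ent f_{|\cN}\geq\frac{\rank(\omega_{|\cN})}{2}\ln a$; and if $\ent f_{|\cN}=0$ then, since $\ln a>0$, necessarily $\rank(\omega_{|\cN})=0$, i.e.\ $\omega_{|\cN}=0$ and $\cN$ is $\omega$-isotropic. The only genuinely delicate point is the regularity bookkeeping of the second paragraph: one must make sure that restricting a smooth symplectic form to a merely $C^{3}$ submanifold still leaves its characteristic foliation $C^{2}$, which is precisely the regularity that Theorem~\ref{thm:ent} requires — this is why $C^{3}$ (and not $C^{1}$, as in Proposition~\ref{PYomdin}) is assumed here. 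Two side remarks: compactness of $\cN$ enters through the finite subcovering in the proof of Theorem~\ref{thm:ent}, so $\cN$ is understood to be closed; and $(\omega_{|\cN})^{\wedge\ell}$ genuinely defines a volume transverse to $\cF$ since $\omega_{|\cN}$ restricts nondegenerately to any $2\ell$-dimensional submanifold transverse to $\cF$, the proof only ever using the absolute value $|\Omega(\cdot)|$, so no global orientation of $\cN$ is needed.
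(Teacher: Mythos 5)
Your proposal is correct and follows essentially the same route as the paper: the paper's proof likewise consists of observing that $\ker\omega_{|\cN}$ is an involutive subbundle of sufficient regularity so that Frobenius yields the characteristic foliation, and then invoking Corollary~\ref{cor:ent} with $\Omega=(\omega_{|\cN})^{\wedge\ell}$ and $b=a^{\ell}$. Your extra care with the rank-zero case and the regularity bookkeeping only makes explicit what the paper leaves implicit.
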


Note that if $\cN$ is a  compact submanifold such that $f_{|\cN}$ is minimal,\footnote{By definition, it is minimal if every orbit is dense} $\omega_{|\cN}$ has constant rank and so the corollary applies.  

\begin{proof}
As $\cN$ is $C^2$, its tangent bundle is $C^1$. Then Frobenius Theorem applies to $F = \ker \omega_{\vert \cN}$\footnote{The infinitesimal integrability condition is well known: if $X,Y$ are sections of $F$ and $Z$ is a section of $T\cN$, $0 = d\omega(X,Y,Z) = - \omega([X,Y],Z)$, which shows that $[X,Y]$ itself is a section of $F$.} and the characteristic foliation $\cF$ exists.
\end{proof}

\section{Liouville class of invariant submanifolds}\label{SLiouville}%conformal symplectic dynamics are exact

\label{sec:exactnessdiff}
In this section we assume that $(\cM,\omega={ -d\lambda})$ is an exact symplectic manifold. The goal is to prove that, given a conformal dynamics,  there is only one Liouville class that  an isotropic invariant submanifold may have.

\subsection{Action of conformal dynamics on Liouville classes}

\begin{definition}
    Let $j : \cS \hookrightarrow \cM$ be an isotropic embedding.
    
    $\bullet$ Its \emph{Liouville class} $[j] \in H^1(\cS,\R)$  is  the cohomology class of the induced form $j^*\lambda$. 
    
    $\bullet$ It is \emph{exact} if its Liouville class vanishes.
\end{definition}

So, except if the submanifold is exact, its Liouville class depends on the chosen embedding with a given image.

When $\cM=T^*\cQ$ is the cotangent bundle of a closed manifold endowed with its tautological 1-form $\lambda$ { and $\cL$ is a Lagrangian submanifold of $T^*\cQ$ that is homotopic to the zero section $\cZ$, the restriction  to $\cL$ of the canonical projection $\pi:T^*\cQ\rightarrow \cQ$ is a homotopy equivalence between $\cL$ and $\cQ$ and  induces an isomorphism between $H^1(\cL,\R)$ and $H^1(\cQ,\R)$. Denoting by $j_\cL:\cL\hookrightarrow T^*\cQ$ the canonical injection defined by $j_\cL(x)=x$, the Liouville class of the submanifold $\cL$ is the cohomological class
$$[\cL]=\Big[\big(\pi_{|\cL}\big)_*\big(j_\cL^*\lambda\big)\Big]\in H^1(\cQ,\R).
$$
}
%the Liouville class of a Lagrangian submanifold $\cL$ which is H-isotopic to a graph may rather be viewed as an element of $H^1(\cQ,\R)$. Indeed, let $(\phi_t)$ be a symplectic isotopy between $\cL$ and the zero section $\cZ$, such that $\phi_0={\rm id}_{T^*\cQ}$ and $\phi_1(\cZ)=\cL$. Then $\phi_{1|\cZ}$ is a homotopy inverse of $\pi_{|\cL}$, where $\pi: T^*\cQ \to \cQ$ is the canonical projection.
In this case, we may thus update the definition of Liouville classes.

\begin{definition}
 Let  $\cL$ be a Lagrangian submanifold of $T^*\cQ$ that is %H-isotopic to a graph,
 { homotopic to the zero section,} the Liouville class $[\cL]$ of $\cL$ is the cohomology class on $\cQ$ whose pull back  by $\pi_{|\cL}$ is the cohomology class of $\lambda_{|T\cL}$. 
\end{definition}

The following straightforward proposition explains that the group of conformal dynamics acts {on the set of} Liouville classes of isotropic embeddings { that are homotopic to a given isotropic embedding 
} of a given manifold $\cS$ by homotheties (translations when the dynamics is symplectic).

\begin{proposition}\label{Phomothety}
  Let $f: \cM \righttoleftarrow$ be a conformal diffeomorphism with conformality ratio $a$. Then $\eta=f^*\lambda-a\lambda$ is a closed 1-form.
  
 { Let $  j_0:\cS\hookrightarrow\cM$ be an isotropic embedding.  For every isotropic embedding $ j:\cS\hookrightarrow\cM$ that is homotopic to $j_0$, the Liouville class of the isotropic embedding $f\circ j:\cS\hookrightarrow\cM$  is
  $$[f\circ j]=a[j]+[j_0^*\eta].$$}
\end{proposition}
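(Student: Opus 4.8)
The plan is to unwind the definition of the Liouville class of a composite embedding directly, using the hypothesis that $f^*\lambda = a\lambda + \eta$ with $\eta$ closed, and that $j$ is homotopic to $j_0$.

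First I would observe that, since $f$ is conformal with ratio $a$, the form $f^*\lambda - a\lambda$ is closed: indeed $d(f^*\lambda - a\lambda) = f^*\omega - a\omega = 0$ (using $\omega = -d\lambda$ and $f^*\omega = a\omega$). Call this closed $1$-form $\eta$. Next I would compute the induced form $(f\circ j)^*\lambda = j^*(f^*\lambda) = j^*(a\lambda + \eta) = a\, j^*\lambda + j^*\eta$. Since $j$ is an isotropic embedding, $j^*\lambda$ is closed (its differential is $-j^*\omega = 0$), so $(f\circ j)^*\lambda$ is a sum of closed forms and $f\circ j$ is again isotropic, with Liouville class $[f\circ j] = a\,[j^*\lambda] + [j^*\eta] = a\,[j] + [j^*\eta]$ in $H^1(\cS,\R)$.

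The remaining point — and the only genuinely substantive one — is to replace $[j^*\eta]$ by $[j_0^*\eta]$, i.e.\ to show the cohomology class of $j^*\eta$ depends only on the homotopy class of $j$, not on $j$ itself. This is the homotopy invariance of pullback on de Rham cohomology: if $j$ and $j_0$ are homotopic maps $\cS \to \cM$, then $j^*$ and $j_0^*$ induce the same map $H^1(\cM,\R)\to H^1(\cS,\R)$; applying this to the class $[\eta]\in H^1(\cM,\R)$ gives $[j^*\eta] = [j_0^*\eta]$. (Concretely, a smooth homotopy $J:\cS\times[0,1]\to\cM$ between $j_0$ and $j$ furnishes, via integration along the $[0,1]$-fibre, a $1$-form $\beta$ on $\cS$ with $j^*\eta - j_0^*\eta = d\beta$, since $\eta$ is closed.) Substituting yields $[f\circ j] = a\,[j] + [j_0^*\eta]$, as claimed.

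There is essentially no obstacle here: the statement is, as the paper says, straightforward, and the only ingredient beyond the definitions is the standard homotopy invariance of de Rham cohomology. The mild care needed is to note that isotropy of $j$ is exactly what makes $j^*\lambda$ closed so that its cohomology class is defined, and that isotropy of $f\circ j$ follows automatically (it need not be assumed separately) because $f$ is conformal and hence $f^*\omega = a\omega$ restricts to $0$ on the image of any isotropic embedding.
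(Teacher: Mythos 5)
Your proof is correct: the paper itself gives no argument for this proposition (it is labelled ``straightforward''), and your computation --- closedness of $\eta$ from $f^*\omega=a\omega$, the pullback identity $(f\circ j)^*\lambda=a\,j^*\lambda+j^*\eta$ together with isotropy of $f\circ j$, and homotopy invariance of de Rham cohomology to replace $[j^*\eta]$ by $[j_0^*\eta]$ --- is exactly the intended route. (Only a harmless sign slip: with $\omega=-d\lambda$ one has $d(f^*\lambda-a\lambda)=-(f^*\omega-a\omega)$, which is of course still $0$.)
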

  
\begin{definition}
  A diffeomorphism $f: \cM \righttoleftarrow$ is $\lambda$ conformal Hamiltonian (CH) if  there exists an isotopy $(f_t)_{t\in[0, 1]}$ such that $f_0={\rm Id}_\cM$, $f_1=f$ and two functions $H:[0, 1]\times \cM\rightarrow \R$ and $\alpha:[0, 1]\rightarrow \R$ such that 
  $$\forall (t, x)\in [0, 1]\times \cM, i_{\dot f_t(x)}\omega=\alpha(t)\lambda+\partial_xH(t, x).$$
\end{definition}

\begin{remark} A diffeomorphism $f: \cM \righttoleftarrow$ is conformal Hamiltonian if and only if  there exists an isotopy $(f_t)_{t\in[0, 1]}$ of CES diffeomorphisms such that $f_0={\rm Id}_\cM$ and $f_1=f$.
  \end{remark}
  
  \begin{definition}
The flow $(\varphi_t)$ associated to the vector field $X$ on $\cM$ is  $\lambda$ conformal Hamiltonian  if there exists $\alpha\in \R$ and $H:\cM\rightarrow \R$ such that $i_X\omega=\alpha \lambda +dH$.
\end{definition}
\begin{remark}
  A flow is a flow of  $\lambda$ conformal exact symplectic diffeomorphisms if and only if it is $\lambda$ conformal Hamiltonian.
\end{remark}

To describe the behavior of Lagrangian submanifolds of $T^*\cQ$ that are H-isotopic to a graph, we first need the following invariance result. 
{
\begin{proposition}\label{Pisotopytograph} Let $(\cL_t)$ be an isotopy of Lagrangian submanifolds of $T^*\cQ$  such that $\cL_0=\cZ$. Then $\cL_1$ is  H-isotopic to a graph. \\
  %If moreover all the $\cL_t$'s are exact, then $\cL_1$ is H-isotopic to the zero-section.
  \end{proposition}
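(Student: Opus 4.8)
The plan is to correct the given Lagrangian isotopy by fibrewise translations of $T^*\cQ$ so as to cancel its flux, thereby reducing the statement to the classical fact that a Lagrangian isotopy all of whose velocity $1$-forms are exact is the orbit of a Hamiltonian isotopy.

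Write $\cL_t=j_t(L)$, where $(j_t)_{t\in[0,1]}$, $j_t:L\hookrightarrow T^*\cQ$, is a smooth family of Lagrangian embeddings of a fixed closed manifold $L$ with $j_0(L)=\cZ$, and put $Y_t=\partial_t j_t$ (the velocity field along $j_t$). The velocity $1$-form of the isotopy is $\sigma_t\in\Omega^1(L)$, $\sigma_t(v)=\omega\big(Y_t,\,dj_t(v)\big)$ for $v\in TL$; it is closed because $\cL_t$ is Lagrangian and $\omega$ is closed. Since $j_t$ is homotopic to $j_0$, the map $\pi\circ j_t:L\to\cQ$ is a homotopy equivalence and the induced isomorphism $H^1(\cQ,\R)\cong H^1(L,\R)$ is independent of $t$; through it I regard $[\sigma_t]$ as an element of $H^1(\cQ,\R)$, and I set $c_t=\int_0^t[\sigma_s]\,ds\in H^1(\cQ,\R)$ (so $c_0=0$; this is the flux of the isotopy up to time $t$). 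Using Hodge theory, fix a linear section $s$ of the projection from the space of closed $1$-forms on $\cQ$ onto $H^1(\cQ,\R)$, and set $b_t=s(c_t)$: this is a smooth path of closed $1$-forms on $\cQ$ with $b_0=0$ and $[\partial_t b_t]=[\sigma_t]$ for every $t$.

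For a closed $1$-form $\gamma$ on $\cQ$, let $\tau_\gamma:(q,p)\mapsto(q,p+\gamma_q)$ be the corresponding fibrewise translation. It is a symplectomorphism of $(T^*\cQ,-d\lambda)$, since $\tau_\gamma^*\lambda-\lambda=\pi^*\gamma$ is closed; moreover $\pi\circ\tau_\gamma=\pi$ and $\tau_\gamma$ maps $\cZ$ onto the graph of $\gamma$. Set $\widetilde\cL_t=\tau_{b_t}(\cL_t)=\widetilde j_t(L)$ with $\widetilde j_t=\tau_{b_t}\circ j_t$, so that $\widetilde\cL_0=\cZ$. Differentiating $\widetilde j_t$ in $t$ and using that $\tau_{b_t}$ is a symplectomorphism with $\pi\circ\tau_{b_t}=\pi$, the velocity $1$-form of $\widetilde\cL_t$ is found to be $\widetilde\sigma_t=\sigma_t-(\pi\circ j_t)^*\partial_t b_t$ (the sign being that dictated by the convention $\omega=-d\lambda$); passing to cohomology, $[\widetilde\sigma_t]=[\sigma_t]-[\partial_t b_t]=0$ in $H^1(\cQ,\R)$, so $\widetilde\sigma_t$ is exact for every $t$.

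Now invoke the classical fact — a consequence of Weinstein's Lagrangian neighbourhood theorem together with the isotopy extension argument — that a Lagrangian isotopy of a closed manifold whose velocity $1$-forms are all exact extends to a Hamiltonian isotopy of the ambient symplectic manifold: there is a Hamiltonian isotopy $(\psi_t)$ of $T^*\cQ$ with $\psi_0=\mathrm{Id}$ and $\psi_t(\cZ)=\widetilde\cL_t$. As $\tau_{b_1}^{-1}=\tau_{-b_1}$ sends $\cZ$ onto the graph of the closed $1$-form $-b_1$, and $\cL_1=\tau_{b_1}^{-1}(\widetilde\cL_1)=\tau_{b_1}^{-1}\psi_1(\cZ)$, we obtain
\[
  \cL_1=\bigl(\tau_{b_1}^{-1}\circ\psi_1\circ\tau_{b_1}\bigr)\bigl(\mathrm{graph}(-b_1)\bigr).
\]
Since $\tau_{b_1}$ is a symplectomorphism, $t\mapsto\tau_{b_1}^{-1}\circ\psi_t\circ\tau_{b_1}$ is again a Hamiltonian isotopy (generated by $H_t\circ\tau_{b_1}$ if $H_t$ generates $\psi_t$), so $\cL_1$ is H-isotopic to the graph $\mathrm{graph}(-b_1)$, which is the assertion. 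I expect the delicate point to be the cohomological bookkeeping of the third paragraph: verifying the splitting of $\widetilde\sigma_t$, up to an exact form, as $\sigma_t$ minus the pull-back of $\partial_t b_t$, and checking that the isomorphisms $H^1(L,\R)\cong H^1(\cQ,\R)$ furnished by the various $\pi\circ j_t$ and by $\pi\circ\widetilde j_t$ all coincide; the analytic input used in the last paragraph is standard.
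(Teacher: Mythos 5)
Your proof is correct and follows essentially the same route as the paper: cancel the non-exactness by fibrewise translations $\tau_{b_t}$ along a path of closed $1$-forms representing the (flux, equivalently Liouville-class) data, reduce to an isotopy of exact Lagrangians, and then conjugate a Hamiltonian isotopy back by the translation. The only difference is presentational: you invoke the exact Lagrangian isotopy extension theorem as a classical black box, whereas the paper proves the needed statement directly (Weinstein tubular neighbourhood plus an open--closed argument in $t$), which is the standard proof of that same fact.
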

 }

\begin{corollary}

\label{PHisotopytograph}
  Let $(g_t)_{t\in[0, 1]}$ be an isotopy of conformal symplectic diffeomorphisms such that $g_0={\rm Id}_{T^*\cQ}$. Let $\cL$ be a Lagrangian submanifold of $T^*\cQ$ that is H-isotopic to a graph. Then $g_1(\cL)$ is H-isotopic to a graph. \\
  If moreover $\cL$ is H-isotopic to the zero-section and the isotopy is conformal Hamiltonian, then $g_1(\cL)$ is H-isotopic to the zero-section.
\end{corollary}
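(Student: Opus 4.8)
The plan is to deduce both assertions from Proposition~\ref{Pisotopytograph} and the transformation rule for Liouville classes of Proposition~\ref{Phomothety}, using two elementary facts. First, a conformal symplectic diffeomorphism maps Lagrangian submanifolds to Lagrangian submanifolds: if $g^*\omega=a\,\omega$ and $\omega$ vanishes on $\cL$, then $\omega$ vanishes on $g(\cL)$ (pull back by $g$), and the dimension is unchanged. Second, for a closed $1$-form $\sigma$ on $\cQ$ the fibrewise translation $\tau_\sigma\colon(q,p)\mapsto(q,p+\sigma(q))$ is a symplectomorphism of $(T^*\cQ,-d\lambda)$ which sends the graph of any closed $1$-form $\rho$ to the graph of $\rho+\sigma$ and which, being a fixed symplectomorphism, conjugates Hamiltonian isotopies to Hamiltonian isotopies; in particular it carries submanifolds H-isotopic to a graph onto submanifolds H-isotopic to a graph.

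For the first assertion, I would argue as follows. Since $\cL$ is H-isotopic to a graph, there is a Hamiltonian isotopy $(\psi_t)$ with $\psi_0={\rm Id}$ and $\psi_1(\cL)=\mathrm{graph}(\sigma)$ for some closed $1$-form $\sigma$; run backwards it is an isotopy of Lagrangian submanifolds from $\mathrm{graph}(\sigma)$ to $\cL$. Concatenating it with the Lagrangian isotopy $(g_t(\cL))_{t\in[0,1]}$ from the first fact and then applying the symplectomorphism $\tau_{-\sigma}$, which carries $\mathrm{graph}(\sigma)$ to the zero section $\cZ$, one obtains an isotopy of Lagrangian submanifolds from $\cZ$ to $\tau_{-\sigma}(g_1(\cL))$. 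Proposition~\ref{Pisotopytograph} then tells us that $\tau_{-\sigma}(g_1(\cL))$ is H-isotopic to a graph, and applying $\tau_\sigma$ shows that $g_1(\cL)$ is H-isotopic to a graph.

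For the second assertion, write $\cL=\phi(\cZ)$ with $\phi$ Hamiltonian (hence exact symplectic), so that $g_1(\cL)=(g_1\circ\phi)(\cZ)$. The diffeomorphism $g_1$ is CES of its ratio $a$, being conformal Hamiltonian, and a composition of CES diffeomorphisms is CES, so $g_1\circ\phi$ is CES of ratio $a$; put $\eta=(g_1\circ\phi)^*\lambda-a\lambda$, an exact form. Proposition~\ref{Phomothety}, applied with $f=g_1\circ\phi$ and $j=j_0=j_\cZ$, then gives that the Liouville class of $g_1(\cL)$ equals $a\,[j_\cZ]+[j_\cZ^*\eta]$, which is zero since the zero section has vanishing Liouville class and $\eta$ is exact; thus $g_1(\cL)$ is an exact Lagrangian. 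By the first assertion it is also H-isotopic to a graph, say $g_1(\cL)=\chi(\mathrm{graph}(\rho))$ with $\chi$ Hamiltonian and $\rho$ closed. A second use of Proposition~\ref{Phomothety} --- with ratio $1$, exact correction term, and the embedding of $\mathrm{graph}(\rho)$ homotopic to that of $\cZ$ through $\mathrm{graph}(t\rho)$ --- shows that the Liouville class of $g_1(\cL)$ equals $[\rho]$, so $[\rho]=0$ and $\rho=dh$ is exact. Since the graph of an exact $1$-form is H-isotopic to $\cZ$ (it is the time-one image of $\cZ$ under a suitable Hamiltonian flow), $g_1(\cL)=\chi(\mathrm{graph}(dh))$ is H-isotopic to $\cZ$.

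The one substantial input is Proposition~\ref{Pisotopytograph}; the rest is assembly. The point requiring genuine care --- more a bookkeeping hazard than a difficulty --- is the gap between the two conclusions: being H-isotopic to \emph{some} graph is flexible and follows directly from the Lagrangian-isotopy statement, whereas being H-isotopic to the \emph{zero section} is rigid and is reached only after pinning the Liouville class down to zero via Proposition~\ref{Phomothety} --- which is exactly the step that uses the hypothesis that the isotopy $(g_t)$ is conformal \emph{Hamiltonian}, so that the correction term $[j_\cZ^*\eta]$ disappears. One also has to keep checking, at each invocation of Proposition~\ref{Phomothety} and of the definition of the Liouville class, that the embeddings involved are homotopic to $j_\cZ$ (graphs of closed $1$-forms are), and that reversing and concatenating Lagrangian isotopies and conjugating by the translations $\tau_\sigma$ are all legitimate; I expect these to be routine.
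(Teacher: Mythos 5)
Your proof is correct, and for the first assertion it is essentially the paper's argument in a slightly different packaging: the paper builds a single Lagrangian isotopy $(\cL_t)=(g_t\circ h_t\circ T_t(\cZ))$ from $\cZ$ to $g_1(\cL)$ (where $(h_t)$ is the H-isotopy to the graph and $T_t$ the fibrewise translation by $t\eta$) and feeds it to Proposition~\ref{Pisotopytograph}, whereas you concatenate the reversed H-isotopy with $(g_t(\cL))$ and then conjugate by $\tau_{-\sigma}$ to start at $\cZ$; these are the same idea, and both require the small observation that conformal symplectic maps preserve Lagrangian submanifolds and that fibrewise translations conjugate H-isotopies to H-isotopies. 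For the second assertion your route genuinely differs: the paper observes that in this case all the maps $g_t\circ h_t\circ T_t$ are conformal Hamiltonian, so every $\cL_t$ is \emph{exact} Lagrangian, and then invokes the exact case established inside the proof of Proposition~\ref{Pisotopytograph} (the open--closed argument via Weinstein neighbourhoods), which yields H-isotopy to the zero section directly. You instead use Proposition~\ref{Phomothety} with $f=g_1\circ\phi$ (CES, since $g_1$ is CES as the endpoint of a CH isotopy and compositions with Hamiltonian maps remain CES) to pin the Liouville class of $g_1(\cL)$ to zero, and then upgrade ``exact and H-isotopic to a graph'' to ``H-isotopic to $\cZ$'' by noting the graph must be that of an exact form $dh$, whose graph is Hamiltonianly isotopic to the zero section. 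Your version has the advantage of relying only on the \emph{stated} conclusions of Propositions~\ref{Pisotopytograph} and~\ref{Phomothety} (the paper's citation of ``the second part of Proposition~\ref{Pisotopytograph}'' actually points to a step inside its proof rather than its statement), at the cost of using the remark that the time-one map of a CH isotopy is CES and the parametrization-independence of the vanishing of the Liouville class; the paper's version is more uniform in that it tracks exactness along the whole isotopy rather than only at the endpoint.
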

 {  \begin{proof}[Proof of Proposition \ref{Pisotopytograph}] 
   We will prove
  \begin{lemma}\label{Lisotexactlagr} Assume that $\cL$ is H-isotopic to the zero section  and that $(\cL_t)_{t\in[-\varepsilon, \varepsilon]}$ is an isotopy of exact Lagrangian submanifolds  such that $\cL_0=\cL$. Then  there exists a neighbourhood $\cN$ of $0$ in $[-\varepsilon, \varepsilon]$ such that for every $t\in\cN$, $\cL_t$ is H-isotopic to the zero section.
  \end{lemma}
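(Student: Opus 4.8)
The plan is to exploit the Weinstein neighbourhood theorem to linearize the problem near $\cL$, and then to use the fact that ``H-isotopic to the zero section'' is both an open and a closed condition along a path of exact Lagrangians. More precisely, fix a Weinstein tubular neighbourhood $\Psi : \cU \to T^*\cL$ of $\cL$ in $\cM$, sending $\cL$ to the zero section $\cZ_\cL$ and $\lambda$ to $\lambda_{\cL} + \pi^*\theta$ for some closed $1$-form $\theta$ on $\cL$ (the ambiguity coming from the primitive). Since $\cL$ is exact, after adjusting the primitive we may take $\theta = 0$, so $\Psi^*\lambda_{\cL} = \lambda_{|\cU}$ up to an exact form; in particular the Liouville class of a Lagrangian $\cL' \subset \cU$ computed in $\cM$ agrees with the Liouville class of $\Psi(\cL') \subset T^*\cL$. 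For $t$ in a neighbourhood $\cN_0$ of $0$ the Lagrangian $\cL_t$ still lies in $\cU$ and, being a Lagrangian $C^1$-close to $\cZ_\cL$, its image $\Psi(\cL_t)$ is the graph of a closed $1$-form $\sigma_t$ on $\cL$, with $\sigma_0 = 0$ and $\sigma_t$ depending continuously on $t$. Exactness of $\cL_t$ means exactly that $[\sigma_t] = 0 \in H^1(\cL,\R)$, i.e. $\sigma_t = dS_t$ for a smooth function $S_t$ on $\cL$ (chosen to depend continuously on $t$).

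The graph of $dS_t$ is the image of $\cZ_\cL$ under the time-one map of the Hamiltonian isotopy generated by $H_s(q,p) = S_t(q)$ (fiberwise constant in $p$ would not move anything, so more precisely one uses the vertical translation flow $\phi^s : (q,p)\mapsto (q, p + s\, dS_t(q))$, which is the Hamiltonian flow of $H(q,p) = \langle p, \text{(nothing)}\rangle$ — concretely, $\phi^s = $ time-$s$ flow of the Hamiltonian $h_t = S_t \circ \pi$, since $X_{h_t} = $ the vertical vector field $dS_t(q)$ in the fiber). Transporting this isotopy back to $\cM$ via $\Psi$ and cutting it off away from $\cU$ yields a Hamiltonian isotopy of $\cM$ carrying $\cL$ to $\cL_t$. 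Composing with a fixed Hamiltonian isotopy from $\cZ$ to $\cL$ (which exists by hypothesis on $\cL$), we conclude that $\cL_t$ is H-isotopic to the zero section for all $t$ in the neighbourhood $\cN = \cN_0$ of $0$.

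The main obstacle is the bookkeeping around the primitive $\lambda$: the Weinstein theorem only identifies a neighbourhood of $\cL$ symplectomorphically with a neighbourhood of $\cZ_\cL$, and one must check that, because $\cL$ is exact (Liouville class zero) and each $\cL_t$ is exact, the primitive can be arranged so that the identification is genuinely ``exact'' — otherwise the $1$-form $\sigma_t$ would only be well-defined up to a fixed closed form and the vanishing of its cohomology class would not directly give a globally defined generating function $S_t$. This is a standard but slightly delicate point; once it is settled, the openness argument above is routine, and the continuity in $t$ of all the objects involved (Weinstein chart, the forms $\sigma_t$, the primitives $S_t$, and the resulting Hamiltonian isotopies) is straightforward from the compactness of $\cL$ and smooth dependence on parameters.
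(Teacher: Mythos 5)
Your proposal is correct and follows essentially the same route as the paper: a Weinstein tubular neighbourhood of $\cL$, the observation that exactness of $\cL$ makes the identification exact symplectic so that exactness of $\cL_t$ yields a globally defined primitive $S_t$ of the graph form, a Hamiltonian pulled back from the base (cut off by a bump function) carrying $\cL$ to $\cL_t$, and transitivity of H-isotopy. The only cosmetic difference is that you translate the zero section to the graph of $dS_t$ by the autonomous fiberwise flow of $S_t\circ\pi$ for each fixed $t$, while the paper uses the time-dependent Hamiltonian $-\tfrac{du_t}{dt}\circ\pi$ moving through the family of graphs; your parenthetical remark that a fiber-independent Hamiltonian ``would not move anything'' is a slip (its flow is exactly the vertical translation you need), but you correct it in the same sentence.
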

  \begin{proof}[Proof of Lemma \ref{Lisotexactlagr}] 
   We use Weinstein tubular neighbourhood Theorem, \cite{Weinstein1977}. Let $\cT$ be a symplectic tubular of $\cL$, i.e. there exists a neighbourhood $\cU$ of the zero section in $T^*\cL$ and a symplectic embedding $\phi:\cU \hookrightarrow T^*\cQ$ with image $\cT$ that is ${\rm Id}_{\cL}$ on $\cL$. As $\Phi$ maps the exact Lagrangian submanifold $\cL$ of $T^*\cL$ onto the exact Lagrangian submanifold $\cL$ of $T^*\cQ$, then $\Phi$ is exact symplectic. \\
 This implies that every submanifold $\phi^{-1}(\cL_t)$ is exact Lagrangian. Moreover, there exists a neighbourhood $\cN$ of $0$ in $[-\varepsilon, \varepsilon]$ such that for every $t\in\cN$, $\phi^{-1}(\cL_t)$ is a graph. Hence this is the graph of an exact 1-form $du_t$. 
  %As this graph is Lagrangian, this is the graph of a closed $1$-form $\eta_t$ of $\cL$ that depends smoothly on $t$.\\
 % Let us explain why $\eta_t$ is exact. If not, there exists a closed curve $\gamma_t:\T\rightarrow \cL$ on $\cL$ such that the area of the immersed annulus $\{ \phi^{-1}\circ g_s\circ\gamma_t(u); s\in [0, t], u\in\T\}$ is non zero~: indeed, by Stokes formula, this area is $\int_{\phi\circ g_t\circ\gamma_t}\lambda-\int_{\phi\circ \gamma_t}\lambda$ and we can fix the first integral to be non-zero because $\eta_t$ is non exact and the second integral is zero because the zero section is exact. As $\phi$ is symplectic, the area $\{ g_s\circ\gamma_t(u); s\in [0, t], u\in\T\}$ is also non zero. 
  
 % This implies that the Liouville classes of $\cL$ and $\cL_t$ are different. But because $(g_t)$ is conformal Hamiltonian, every $g_t$ is CES and  by Proposition \ref{Puniqueclass}, $g_t(\cL)$ is exact as $\cL$ is, which provides a contradiction.\\Hence $\eta_t=du_t$. 
 Then $\phi^{-1}(\cL_t)$ is the image by the time-1 Hamiltonian flow of $H=-\frac{du_t}{dt}\circ \pi$. Using a bump function, we can assume that $H$ has support in $\cU$, and then the time-1 map of the Hamiltonian $H\circ\phi$ maps $\cL$ onto $\cL_t$.
  
  \end{proof}
We now prove Proposition \ref{Pisotopytograph}.  Let us firstly deal with the case when all the $\cL_t$s are exact. We introduce  
  $$\{ t\in[0, 1]; \forall s\in [0, t], g_s(\cL)\;{\rm is}\; H-{\rm isotopic}\;{\rm to}\;{\rm the}\;{\rm zero}\;{\rm section}\}.$$
  Lemma \ref{Lisotexactlagr}  and the transitivity of the relation of $H$-isotopy imply that this set is closed and open in $[0, 1]$, hence equal to $[0,1]$.
  
 Now we just assume that $(\cL_t)$ is an isotopy of Lagrangian submanifolds of $T^*\cQ$  such that $\cL_0=\cZ$. We choose an arc $(\eta_t)_{t\in[0,1]}$ of closed 1-forms on $\cQ$ whose cohomology class $[\eta_t]=[\cL_t]$ is the Liouville class of $\cL_t$. We denote by $T_t:T^*\cQ\righttoleftarrow$ the symplectic diffeomorphisms such that $T_t(p)=p+\eta_t\circ\pi (p)$. Then $\cL^*_t=T_{-t}(\cL_t)$ defines a homotopy of exact Lagrangian submanifolds of $T^*\cQ$. A result of the first part of the proof is that $\cL^*_t$ is $H$-isotopic to the zero section, i.e. there exists a $H$-isotopy $(\phi_t)_{t\in[0, 1]}$ such that $\phi_0={\rm Id}$ and $\phi_1(\cZ)=\cL^*_1$. Hence $\cL_1=T_1(\cL^*_1)$ is $H$ isotopic to the graph of $\eta_1$ via  the $H$-isotopy 
 $$(\psi_t)_{t\in[0,1]}=(T_1\circ \phi_t\circ T_1^{-1})_{t\in[0, 1]}.$$

  \end{proof}
  
  \begin{proof}[Proof of Corollary  \ref{PHisotopytograph}]
  We assume that  $(g_t)_{t\in[0, 1]}$ is an isotopy of conformal symplectic diffeomorphisms such that $g_0={\rm Id}_{T^*\cQ}$ and that  $\cL$ is a Lagrangian submanifold of $T^*\cQ$ that is H-isotopic to a graph. Then  there exist a closed $1$-form $\eta$ on $\cQ$ and  a $H$-isotopy $(h_t)_{t\in[0,1]}$ such that $h_0={\rm Id}_{T^*\cQ}$ and  $\cL=h_1({\rm graph}(\eta))$. We introduce the symplectic diffeomorphisms $(T_t)_{t\in[0, 1]}$ of $T^*\cQ$ that are defined by $T_t(p)=p+t\eta\circ \pi(q)$. Then $$(\cL_t)_{t\in[0,1]}=(g_t\circ h_t\circ T_t(\cZ))_{t\in[0,,1]}$$ is a isotopy of Lagrangian submanifolds such that $\cL_0=\cZ$ and $\cL_1=g_1(\cL)$. A result of Proposition \ref{Pisotopytograph} is that $g_1(\cL)$ is H-isotopic to a graph. \\
  
  If moreover $\cL$ is H-isotopic to the zero-section and the isotopy is conformal Hamiltonian, then all the maps $g_t\circ h_t\circ T_t$ are conformal Hamiltonian  and thus every manifold $\cL_t$ is exact Lagrangian. The conclusion is a result of the second part of Proposition \ref{Pisotopytograph}.

  \end{proof}}
  
 % Then we have.
 % \begin{proposition}\label{Phomothetiecotangent}
 % Assume that $(g_t)$ is an isotopy of conformal diffeomorphisms such that $g_0={\rm Id}_{T^*\cQ}$ and $g_t^*\omega=a(t)\omega$, then there exists a family $(\eta_t)$ of closed 1-forms on $\cQ$ such that for every Lagrangian submanifold $\cL$ of $T^*\cQ$ that is H-isotopic to a graph, we have
 % $$\forall t, [g_t(\cL)]=a(t)[\cL]+[\eta_t].$$
 % \end{proposition}
  
  %\begin{proof}[Proof of Proposition \ref{Phomothetiecotangent}] Observe that every form $g_t^*\lambda-a(t)\lambda$ is closed. As $\pi:T^*\cQ\rightarrow \cQ$ is an homotopy equivalence, we can choose in a continuous way a closed 1-form $\eta_t$ on $M$ such that $g_t^*\lambda-a(t)\lambda-\pi^*\eta_t$ is exact.
  
 % If now $\cL$ is H-isotopic to a Lagrangian graph, then 
 % \begin{equation}\label{Egacohom}\big(g_t^*\lambda-a(t)\lambda-\pi^*\eta_t\big)_{|T\cL}\end{equation}
 % is exact.
%  As $\cL$ is H-isotopic to a graph, there exists a symplectic isotopy $(\phi_t)$ such that $\phi_0={\rm Id}_{T^*\cQ}$ and $\phi_1(\cZ)=\cL$ where $\cZ$ is the zero section (that we identity with $\cQ$). We deduce from \eqref{Egacohom} that
%  $$[(g_t\circ\phi_1)^*\lambda_{|T\cZ}]=a(t)[\phi_1^*\lamb%da_{|T\cZ}]+[\phi_1^*\pi^*\eta_t]$$
%  and then 
 % $[g_t(\cL]=a(t)[\cL)]+[\eta_t]$.
 % \end{proof}
\subsection{Liouville classes of invariant  submanifolds}

Let $  j_0:\cS\hookrightarrow\cM$ be an isotropic embedding. We denote by $\cJ(j_0)$ the set of isotropic embeddings $  j:\cS\hookrightarrow\cM$ that are homotopic to $j_0$. 

{ A consequence of Proposition \ref{Phomothety} is
\begin{proposition}\label{Puniqueclass}
Let $f: \cM \righttoleftarrow$ be a conformal diffeomorphism.
%Let $f: \cM \righttoleftarrow$ be a diffeomorphism that is CS-isotopic to ${\rm Id}_{\cM}$. 
Let $j\in\cJ(j_0)$ be an isotropic embedding which is strongly $f$-invariant in the sense that\\
$\bullet$ $j(\cS)=f\circ j(\cS)$\\
$\bullet$ $f$ acts trivially on $H^1(j(\cS),\R)$.\\
Then $j$ may have only one Liouville class, that we denote by $[\ell_f(\cJ(j_0))]$.\\
In particular, when $f$ is CES, then $[\ell_f(\cJ(j_0))]=0$ and $j$ has to be exact.
\end{proposition}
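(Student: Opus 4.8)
The plan is to reduce Proposition \ref{Puniqueclass} directly to Proposition \ref{Phomothety}. Let $j \in \cJ(j_0)$ be a strongly $f$-invariant isotropic embedding, so that $j(\cS) = f \circ j(\cS)$ and $f_{|j(\cS)}^*$ is the identity on $H^1(j(\cS),\R)$. The first step is to observe that $f \circ j$ is again an isotropic embedding (conformal diffeomorphisms preserve isotropy, since $(f\circ j)^*\omega = j^* f^* \omega = a\, j^*\omega = 0$) and that it is homotopic to $j_0$; indeed $f$ is isotopic to the identity only in the cotangent-bundle setting, but here we can argue more cheaply: since $j(\cS)$ and $f\circ j(\cS)$ have the same image, and since $f$ restricted to that image is a diffeomorphism of $j(\cS)$ acting trivially on $H^1$, the composite $f \circ j$ differs from $j$ by precomposition with the self-diffeomorphism $j^{-1}\circ f \circ j$ of $\cS$; the point is only that $f\circ j \in \cJ(j_0)$, which the statement is implicitly using as the ambient set on which all Liouville classes are compared. (If one wants $f\circ j$ genuinely homotopic to $j_0$ one notes $j^{-1}\circ f\circ j$ acts trivially on $H^1(\cS)$, but this is not needed for the cohomological computation below.)

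The second and main step is the computation of the Liouville class of $f \circ j$ via Proposition \ref{Phomothety}. That proposition gives, for any isotropic $j \in \cJ(j_0)$,
\[
[f\circ j] = a\,[j] + [\,j_0^*\eta\,], \qquad \eta = f^*\lambda - a\lambda,
\]
where the right-hand cohomology class $[j_0^*\eta] \in H^1(\cS,\R)$ depends only on $f$ and on the homotopy class $\cJ(j_0)$, not on the particular representative $j$. Now I use the strong invariance. Since $f \circ j$ and $j$ have the same image $\cN := j(\cS)$, write $\varphi := j^{-1}\circ f \circ j : \cS \righttoleftarrow$; then $f\circ j = j \circ \varphi$, so $(f\circ j)^*\lambda = \varphi^*(j^*\lambda)$, and therefore $[f\circ j] = \varphi^*[j]$ in $H^1(\cS,\R)$. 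The hypothesis that $f$ acts trivially on $H^1(\cN,\R)$ translates, under the identification $j_* : H^1(\cS) \to H^1(\cN)$, into $\varphi^* = \mathrm{id}$ on $H^1(\cS,\R)$. Hence $[f\circ j] = [j]$.

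Combining the two displays, $[j] = a\,[j] + [j_0^*\eta]$, i.e. $(1-a)[j] = [j_0^*\eta]$. Since $f$ is conformal and non-symplectic, $a \neq 1$, so
\[
[j] = \frac{1}{1-a}\,[\,j_0^*\eta\,] =: [\ell_f(\cJ(j_0))],
\]
a class depending only on $f$ and $\cJ(j_0)$. This is exactly the asserted uniqueness. For the final sentence: if $f$ is $\lambda$-CES then by definition $\eta = f^*\lambda - a\lambda$ is exact, so $j_0^*\eta$ is exact and $[j_0^*\eta] = 0$, whence $[j] = 0$ and $j$ is exact.

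I expect the only genuinely delicate point to be the bookkeeping around homotopy classes — verifying that $f\circ j$ lies in the same set $\cJ(j_0)$ on which Proposition \ref{Phomothety} is being applied, and that the class $[j_0^*\eta]$ is indeed the same for $j$ and for $f\circ j$ (which is automatic since it is computed from the fixed reference embedding $j_0$, not from $j$). The Lie-derivative/cohomology identity $[f\circ j] = \varphi^*[j] = [j]$ under trivial action on $H^1$ is the substantive use of the hypothesis; everything else is the algebra $(1-a)[j] = [j_0^*\eta]$ with $a\neq 1$.
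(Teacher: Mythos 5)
Your proposal is correct and follows essentially the same route as the paper: apply Proposition \ref{Phomothety} to get $[f\circ j]=a[j]+[j_0^*\eta]$, use the trivial action on $H^1$ to get $[f\circ j]=[j]$, and conclude that $[j]$ is the unique fixed point of the affine map $x\mapsto ax+[j_0^*\eta]$ since $a\neq 1$. The extra care you take (the factorization $f\circ j=j\circ\varphi$ and the membership of $f\circ j$ in $\cJ(j_0)$) only makes explicit what the paper leaves implicit.
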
}

{ \begin{proof}
Let $  j:\cS\hookrightarrow\cM$ be such an embedding. 
With the notations of Proposition \ref{Phomothety}, we have
$$[f\circ j]=a[j]+[j_0^*\eta].$$
As $f$ acts trivially on on $H^1(j(\cS),\R)$, we have $[f\circ j]=[j]$ and finally $[j]$ has to be  the only fixed point of the homothety that maps $[j]$ on $a[j]+[j_0^*\eta]$. 
\end{proof}}

As a consequence:

\begin{proposition} \label{Pinvalagrexact}
 Let $f:\cM \righttoleftarrow$ be a $\lambda$ CES diffeomorphism. %{\color{red}that is CS-isotopic to ${\rm Id}_{\cM}$ Je crois que cette hypoth\`ese ne sert \`a rien}\footnote{By \emph{CS-isotopic}, we mean isotopic in the set of conformal diffeomorphisms}.
 Then every invariant  isotropic submanifold { $\cS$  such that $f_{|\cS}$ acts trivially on $H^1(\cS)$} is exact. 
\end{proposition}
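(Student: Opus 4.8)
The plan is to deduce Proposition~\ref{Pinvalagrexact} directly from Proposition~\ref{Puniqueclass} by taking the isotropic embedding to be the inclusion of the invariant submanifold itself. Concretely, given an invariant isotropic submanifold $\cS \subset \cM$ on which $f_{|\cS}$ acts trivially on $H^1(\cS,\R)$, I would let $j_0 = j : \cS \hookrightarrow \cM$ be the canonical inclusion, so that $\cS = j(\cS)$ is strongly $f$-invariant: the condition $j(\cS) = f\circ j(\cS)$ is precisely the $f$-invariance of the submanifold, and the triviality of the action on $H^1$ is the second bullet of Proposition~\ref{Puniqueclass}. Since a CES diffeomorphism $f$ satisfies $f^*\lambda - a\lambda = d\varphi$ for some function $\varphi$, the closed $1$-form $\eta$ of Proposition~\ref{Phomothety} is exact, hence $[j_0^*\eta] = 0$; applying Proposition~\ref{Puniqueclass} we get $[\ell_f(\cJ(j_0))] = 0$ and therefore $[j] = 0$, i.e. the Liouville class of $\cS$ vanishes, which is exactly the assertion that $\cS$ is exact.

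There is essentially no obstacle here beyond bookkeeping: the work has all been done in Propositions~\ref{Phomothety} and~\ref{Puniqueclass}. The only point worth spelling out is why $[j_0^*\eta]=0$ when $f$ is CES. By definition of CES, $\eta = f^*\lambda - a\lambda$ is exact on $\cM$, say $\eta = dg$; then $j_0^*\eta = d(g\circ j_0)$ is exact on $\cS$, so its cohomology class vanishes. Feeding this into the fixed-point computation from the proof of Proposition~\ref{Puniqueclass} — the Liouville class $[j]$ must be the unique fixed point of $x \mapsto ax + [j_0^*\eta]$, and since $a \neq 1$ and $[j_0^*\eta] = 0$ this fixed point is $0$ — gives $[j] = 0$.

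Thus the proof is short: apply Proposition~\ref{Puniqueclass} with $j_0 = j$ equal to the inclusion of $\cS$, use that $f$ being $\lambda$~CES forces $\eta = f^*\lambda - a\lambda$ to be exact so that $[j_0^*\eta] = 0$, and conclude that the Liouville class of $\cS$ is the unique fixed point $0$ of the associated homothety. Hence $\cS$ is exact.
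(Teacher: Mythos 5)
Your proof is correct and is exactly the paper's intended argument: the paper states Proposition~\ref{Pinvalagrexact} as an immediate consequence of Proposition~\ref{Puniqueclass}, applied as you do with $j_0=j$ the inclusion of $\cS$, the CES hypothesis making $[j_0^*\eta]=0$ so that the unique fixed point of the homothety (using $a\neq 1$) is the zero Liouville class.
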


{}

% \begin{corollary}[Proposition %\ref{Pinvalagrexact}]\label{Cinvalagrexact}
% Let $f:\cM \righttoleftarrow$ be a $\lambda$ CES %diffeomorphism that is CS-isotopic to ${\rm Id}_{\cM}$. %Then every invariant isotropic submanifold is exact. 
%\end{corollary}
 
 \begin{corollary}\label{Cuniqueclassflow}
Let $X$  be a CS vector field  on $\cM$ with flow $(\varphi_t)$. Let $  j_0:\cS\hookrightarrow\cM$ be an isotropic embedding. We denote by $\cJ(j_0)$ the set of isotropic embeddings $  j:\cS\hookrightarrow\cM$ that are homotopic to $j_0$. 
Then there is only one Liouville class that we denote by $[\ell_X(\cJ)]$, that an isotropic embedding $j\in\cJ(j_0)$ such that
$$\forall t\in\R, \varphi_t(j(\cS))=j(\cS)$$
may have.\\
In particular, when $X$ is CH, then $[\ell_X(\cJ)]=0$.
\end{corollary}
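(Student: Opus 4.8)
The plan is to deduce Corollary \ref{Cuniqueclassflow} from Proposition \ref{Puniqueclass} by applying the latter to the time-one map $f = \varphi_1$, after checking that the hypotheses transfer. First I would observe that if $j\in\cJ(j_0)$ is an isotropic embedding with $\varphi_t(j(\cS)) = j(\cS)$ for all $t\in\R$, then in particular $f\circ j(\cS) = \varphi_1(j(\cS)) = j(\cS)$, so $j$ is strongly $f$-invariant in the first sense required by Proposition \ref{Puniqueclass}. Moreover $\varphi_1$ is a conformal symplectic diffeomorphism (with conformality ratio $a = e^{\alpha}$, where $\alpha$ is the conformality rate of $X$), so Proposition \ref{Puniqueclass} is applicable provided we also verify that $\varphi_1$ acts trivially on $H^1(j(\cS),\R)$.

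The key step is precisely this triviality of the action on $H^1$. Here I would use that $(\varphi_t)_{t\in[0,1]}$ is an isotopy from $\mathrm{Id}_\cM$ to $\varphi_1$, and that along this isotopy each $\varphi_t$ preserves the set $j(\cS)$ (since $\varphi_t(j(\cS)) = j(\cS)$ for all $t$, by hypothesis). Restricting to $j(\cS)$, the family $(\varphi_{t|j(\cS)})_{t\in[0,1]}$ is a homotopy of self-maps of $j(\cS)$ from the identity to $\varphi_{1|j(\cS)}$; hence $\varphi_{1|j(\cS)}$ is homotopic to the identity on $j(\cS)$, and therefore induces the identity on $H^1(j(\cS),\R)$ (cohomology is a homotopy invariant). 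This verifies the second bullet of Proposition \ref{Puniqueclass}, so that result yields a unique possible Liouville class $[\ell_{\varphi_1}(\cJ(j_0))]$; we set $[\ell_X(\cJ)] := [\ell_{\varphi_1}(\cJ(j_0))]$. The main (really the only) obstacle is making sure the homotopy argument is phrased correctly: the hypothesis that $\varphi_t(j(\cS)) = j(\cS)$ for \emph{every} $t$ (not just $t\in\Z$) is exactly what makes the restricted isotopy land inside $j(\cS)$ at every time, which is what forces $\varphi_{1|j(\cS)}$ to be homotopic to the identity.

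Finally, for the "in particular" statement: if $X$ is CH, then by definition $i_X\omega = \alpha\lambda + dH$ for some $\alpha\in\R$ and $H:\cM\to\R$, so the flow $(\varphi_t)$ is a flow of $\lambda$ conformal exact symplectic diffeomorphisms (by the remark following the definition of CH flows). In particular $\varphi_1$ is CES, and Proposition \ref{Puniqueclass} in that case gives $[\ell_{\varphi_1}(\cJ(j_0))] = 0$, i.e. $[\ell_X(\cJ)] = 0$. This completes the deduction; no computation beyond the homotopy-invariance of cohomology is needed.

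\begin{proof}[Proof of Corollary \ref{Cuniqueclassflow}]
Set $f = \varphi_1$. Since $X$ is conformal symplectic of rate $\alpha$, $f$ is a conformal symplectic diffeomorphism with conformality ratio $a = e^{\alpha}$. Let $j\in\cJ(j_0)$ be an isotropic embedding such that $\varphi_t(j(\cS)) = j(\cS)$ for every $t\in\R$. In particular $f\circ j(\cS) = j(\cS)$. Moreover, the isotopy $(\varphi_{t|j(\cS)})_{t\in[0,1]}$ is a homotopy of self-maps of $j(\cS)$ from $\mathrm{Id}_{j(\cS)}$ to $f_{|j(\cS)}$; hence $f_{|j(\cS)}$ is homotopic to the identity and acts trivially on $H^1(j(\cS),\R)$. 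Thus $j$ is strongly $f$-invariant in the sense of Proposition \ref{Puniqueclass}, which provides a single possible Liouville class; we denote it by $[\ell_X(\cJ)]$.

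If moreover $X$ is CH, then $i_X\omega = \alpha\lambda + dH$ for some $H:\cM\to\R$, so $(\varphi_t)$ is a flow of $\lambda$ conformal exact symplectic diffeomorphisms and in particular $f = \varphi_1$ is CES. By the last assertion of Proposition \ref{Puniqueclass}, $[\ell_X(\cJ)] = 0$.
\end{proof}
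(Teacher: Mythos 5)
Your deduction is correct and is essentially the argument the paper intends (the corollary is left without proof as an immediate consequence of Proposition \ref{Puniqueclass}): the invariance of $j(\cS)$ under $\varphi_t$ for \emph{all} $t$ gives the homotopy of $\varphi_{1|j(\cS)}$ to the identity, hence the trivial action on $H^1(j(\cS),\R)$, and the CH case follows since the flow then consists of $\lambda$ CES maps. The only point left implicit is the paper's standing assumption $\alpha\neq 0$, i.e. $a=e^{\alpha}\neq 1$, which is what makes the affine action $[j]\mapsto a[j]+[j_0^*\eta]$ on Liouville classes have a unique fixed point.
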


 \begin{corollary}\label{Cuniqueclasscotangent}
Let $f: T^*\cQ \righttoleftarrow$ be a CS-diffeomorphism that is homotopic to ${\rm Id}_{T^*\cQ}$. Then there is only one Liouville class that we denote by $[\ell_f]$, that a  homotopic  to the zero section%\marginpar{\color{red} Il me semble qu'il suffit que $\cL$ soit homotope \`a un graphe (m\^eme pas parmi les lagrangiennes)
 and $f$-invariant submanifold may have.
\end{corollary}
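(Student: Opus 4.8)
The plan is to deduce Corollary~\ref{Cuniqueclasscotangent} directly from Proposition~\ref{Puniqueclass} (or equivalently from Corollary~\ref{Cuniqueclassflow}'s discrete analogue, Proposition~\ref{Phomothety}), specializing to $\cM = T^*\cQ$ and to embeddings homotopic to the zero section. First I would fix a homotopic-to-the-zero-section isotropic embedding $j_0$: say $j_0$ the inclusion of the zero section $\cZ$ itself, which is Lagrangian (hence isotropic) and has vanishing Liouville class. Any submanifold $\cL$ that is homotopic to $\cZ$ and $f$-invariant is then the image of some isotropic embedding $j \in \cJ(j_0)$ with $j(\cS) = f(j(\cS))$, where $\cS = \cZ$ is the model manifold.

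The key step is to check the two hypotheses of Proposition~\ref{Puniqueclass}, in particular that $f$ acts trivially on $H^1(j(\cL),\R)$. Here is where the assumption that $f$ is homotopic to ${\rm Id}_{T^*\cQ}$ enters: a diffeomorphism homotopic to the identity induces the identity on cohomology of $T^*\cQ$, and since $\cL$ is homotopic to $\cZ$ (and $f(\cL) = \cL$), the restriction map $H^1(T^*\cQ,\R) \to H^1(\cL,\R)$ is an isomorphism (as $\pi_{|\cL}:\cL \to \cQ$ is a homotopy equivalence, noted in the excerpt). Because $f$ is homotopic to the identity through maps of $T^*\cQ$ and preserves $\cL$, the induced map $f_{|\cL}^*$ on $H^1(\cL,\R)$ is conjugate, via this restriction isomorphism, to $f^* = \mathrm{id}$ on $H^1(T^*\cQ,\R)$, hence is the identity. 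This verifies the ``strongly invariant'' condition, and Proposition~\ref{Puniqueclass} then yields that $[j] = [\cL]$ is the unique fixed point of the homothety $x \mapsto ax + [j_0^*\eta]$ on $H^1(\cZ,\R)$ — here $a \neq 1$ since $f$ is conformal and non-symplectic, so the fixed point genuinely is unique — and this fixed point, transported to $H^1(\cQ,\R)$ via $\pi_{|\cL}$, is by definition $[\ell_f]$, independent of $\cL$.

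I would then remark that the passage from ``Liouville class of the embedding $j\in\cJ(j_0)$'' to ``Liouville class $[\ell_f] \in H^1(\cQ,\R)$ of the submanifold'' is exactly the identification in the updated definition of Liouville classes for Lagrangian submanifolds homotopic to the zero section: the pushforward by $(\pi_{|\cL})_*$ of the homotopy equivalence. One subtlety to address carefully is that the closed 1-form $\eta = f^*\lambda - a\lambda$, and hence the class $[j_0^*\eta]$, does not depend on the choice of the representative embedding $j$ but only on $f$ and on the homotopy class $\cJ(j_0)$; this is precisely the content of Proposition~\ref{Phomothety}, whose statement already records that the formula $[f\circ j] = a[j] + [j_0^*\eta]$ holds for all $j$ homotopic to $j_0$ with the \emph{same} correction term $[j_0^*\eta]$.

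The main obstacle — really the only non-formal point — is the cohomological triviality of $f_{|\cL}$: one must be sure that ``homotopic to ${\rm Id}_{T^*\cQ}$'' (as a map of the total space, not necessarily through diffeomorphisms) suffices to force $f_{|\cL}^* = \mathrm{id}$ on $H^1(\cL)$, using that $\cL$ is itself homotopic to the zero section and is $f$-invariant. I expect this to follow from a small commutative-diagram chase: $f$ restricts to $\cL$, the inclusion $\cL \hookrightarrow T^*\cQ$ is a homotopy equivalence, $f$ is homotopic to the identity on $T^*\cQ$, so $f \circ \iota_\cL$ is homotopic to $\iota_\cL$ as maps $\cL \to T^*\cQ$; composing with a homotopy inverse of $\iota_\cL$ and using $f(\cL)=\cL$ shows $f_{|\cL}$ is homotopic to $\mathrm{id}_\cL$, whence triviality on all of $H^*(\cL)$. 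Everything else is a direct citation of Proposition~\ref{Puniqueclass} and the definitions.
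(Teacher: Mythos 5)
Your proposal is correct and follows essentially the same route as the paper: fix $j_0$ as the inclusion of the zero section, use that $\pi_{|\cL}$, $\pi$ are homotopy equivalences together with $f\simeq{\rm Id}_{T^*\cQ}$ to get that $f$ acts trivially on $H^1(\cL,\R)$, and then invoke Proposition~\ref{Puniqueclass} (the fixed point of the homothety $[j]\mapsto a[j]+[j_0^*\eta]$, $a\neq 1$). Your explicit diagram chase showing $f_{|\cL}\simeq \mathrm{id}_\cL$ merely spells out the step the paper states in one line, so no substantive difference.
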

\begin{proof} Let $j_0: \cZ{}\hookrightarrow T^*\cQ$ be the canonical injection of the zero-section.\\ 
We assume that $\cL$ is   an $f$-invariant submanifold that is H-isotopic to a Lagrangian graph. Let $  j:\cL\hookrightarrow T^*\cQ$ be the canonical injection. With the notations of Proposition \ref{Puniqueclass}, we have $j\in \cJ(j_0)$.\\
Because\\
$\bullet$  $\pi_{|\cL}$ defines an homotopy equivalence between $\cL$ and $\cQ$;\\
$\bullet$ $\pi$ defines an homotopy equivalence between $T^*\cQ$ and $\cQ$;\\
$\bullet$ $f$ is homotopic to ${\rm Id}_{T^*\cQ}$,\\
then $f$ acts trivially on $H^1(\cL,\R)$.\\
A result of Proposition \ref{Puniqueclass} is that $[j]=[\ell_f(\cJ(j_0))]$, i.e. $[\cL]=[\ell_f(\cJ(j_0))]$.

\end{proof}

\section{Liouville class of Lagrangian submanifolds of \texorpdfstring{$T^*\cQ$}{T*Q} with compact orbits}

The goal of this section is to prove that, given a conformal dynamics on $T^*\cQ$, there is only one Liouville class that a   Lagrangian submanifold with compact orbit may have.

We assume that $\cM=T^*\cQ$ and that $f: \cM \righttoleftarrow$   is CS-isotopic to ${\rm Id}_\cM$. %We will use the following result.

%and alters the integral of the Liouville 1-form on every loop up to the scaling factor $a$.  Then, %by Proposition %\ref{PactionCESloop},
%f$ alters the  the Liouville class of every Lagrangian submanifold  that is H-isotopic to a graph up to the scaling factor %$a$.

We suppose that $j:\cQ\hookrightarrow\cM$ is a Lagrangian embedding such that $j(\cQ)=\cL$  is H-isotopic to a graph and has compact orbit (for example is contained in some compact attracting set). %\marginpar{\color{red} Dans les trois \'enonc\'es qui suivent, on peut se contenter de supposer que les vari\'et\'es sont homotope \`a la section nulle parmi les lagrangiennes vu ce que j'ai fait en proposition \ref{Pisotopytograph}.}

\begin{theorem}\label{Tcompactexactdiff} 
If $f: \cM \righttoleftarrow$   is a $\lambda$  CES diffeomorphism  that is CS-isotopic to ${\rm Id}_\cM$ and $\cL$ is a % H-isotopic to a graph
{  Lagrangian submanifold that is isotopic to the zero section among the Lagrangian submanifolds} of $T^*\cQ$ such that $\bigcup_{k\in\Z}f^k(\cL)$ is relatively compact, then $\cL$ is exact.
\end{theorem}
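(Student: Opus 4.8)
The plan is to reduce the non-invariant case to the invariant one by extracting a limit Lagrangian from the bounded forward orbit and then applying Proposition \ref{Pinvalagrexact} (or Corollary \ref{Cuniqueclasscotangent}). First I would record that, by Corollary \ref{PHisotopytograph}, every iterate $f^k(\cL)$ is again H-isotopic to a graph, so in particular all of them are homotopic to the zero section and the Liouville classes $[f^k(\cL)]\in H^1(\cQ,\R)$ are defined. Since $f$ is CES and CS-isotopic to $\mathrm{Id}_\cM$, Proposition \ref{Phomothety} (with $j_0$ the zero section) gives $[f^{k+1}(\cL)] = a\,[f^k(\cL)]$, because the closed form $\eta = f^*\lambda - a\lambda$ is exact (CES) so $[j_0^*\eta]=0$. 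Hence $[f^k(\cL)] = a^k\,[\cL]$ for all $k\in\Z$.

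Now the hypothesis that $\bigcup_{k\in\Z} f^k(\cL)$ is relatively compact is used to control the Liouville classes uniformly: there is a compact set $\mathcal{K}\subset T^*\cQ$ containing every $f^k(\cL)$, and on $\mathcal{K}$ the form $\lambda$ is bounded, so the integrals $\int_{\gamma} \lambda$ over a fixed finite family of loops generating $H_1(\cL,\Z)\cong H_1(\cQ,\Z)$ (pushed around by $f^k$, which stays in $\mathcal{K}$) are bounded independently of $k$. Therefore the sequence $\bigl([f^k(\cL)]\bigr)_{k\in\Z} = \bigl(a^k[\cL]\bigr)_{k\in\Z}$ is bounded in $H^1(\cQ,\R)$. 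If $a\neq 1$ (which holds since we are in the non-symplectic case), letting $k\to +\infty$ or $k\to-\infty$ according to whether $a<1$ or $a>1$ forces $[\cL]=0$, i.e. $\cL$ is exact.

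The step I expect to be the main obstacle is making the uniform bound on Liouville classes rigorous, i.e. genuinely deducing boundedness of $\int_\gamma \lambda$ over representatives from relative compactness of the orbit. The subtlety is that a homology class in $\cL$ must be represented by a loop lying in $\cL$ itself, and as $k$ varies the manifold $f^k(\cL)$ moves, so one needs the loops to be realized inside $\mathcal{K}$ with uniformly bounded length; this can be arranged, for instance, by fixing loops $\gamma_1,\dots,\gamma_m$ in $\cL$ generating $H_1(\cL;\Z)$ and taking $f^k\circ\gamma_i$, whose images lie in $\mathcal{K}$, and bounding their lengths using that $f^k$ restricted to $\cL$ has controlled derivatives on the compact orbit — or, more robustly, by using that $\pi_{|f^k(\cL)}$ is a homotopy equivalence onto $\cQ$ and choosing loops projecting to a fixed finite family in $\cQ$ while staying over $\mathcal{K}$. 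Once this uniform bound is in hand, the rest is the elementary observation that a geometric sequence $a^k c$ in a normed space is bounded only if $c=0$ (when $a\neq1$), which yields exactness; alternatively one can phrase the whole argument via a convergent subsequence $f^{k_n}(\cL)$ whose limit is an invariant exact Lagrangian and invoke Corollary \ref{Cuniqueclasscotangent} to identify $[\ell_f]=0$ and hence $[\cL]=0$.
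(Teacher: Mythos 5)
Your reduction is the right one and matches the first half of the paper's argument: using Corollary \ref{PHisotopytograph} and Proposition \ref{Phomothety} (with $\eta=f^*\lambda-a\lambda$ exact), each $f^k(\cL)$ is H-isotopic to a graph with Liouville class $a^k[\cL]$, so everything hinges on showing that a relatively compact orbit forbids $a^k[\cL]$ from being unbounded. But that is exactly the step you leave unproved, and the justifications you sketch do not work. First, ``$f^k$ restricted to $\cL$ has controlled derivatives on the compact orbit'' is false: compactness of $\bigcup_k f^k(\cL)$ gives no bound whatsoever on $\|Df^k\|$ along it (think of hyperbolic behaviour on an attractor), so the lengths of the loops $f^k\circ\gamma_i$ may grow exponentially. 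Second, replacing them by loops in $f^k(\cL)$ ``staying over $\mathcal K$'' does not help, because $\lambda$ is not closed on $T^*\cQ$: the integral $\int_c\lambda$ is only bounded by $\sup_{\mathcal K}\|\lambda\|$ times the length of $c$, and a loop confined to a compact set but winding back and forth can have arbitrarily large $\lambda$-integral; there is no elementary reason why every representative of the relevant class inside the (possibly very wrinkled) Lagrangian $f^k(\cL)$ should have bounded length. In other words, ``bounded submanifold $\Rightarrow$ bounded Liouville class'' is true but is precisely the nontrivial content of the theorem, not an observation about $\lambda$ being bounded on $\mathcal K$. Your fallback — extracting a convergent subsequence $f^{k_n}(\cL)$ and invoking Corollary \ref{Cuniqueclasscotangent} for ``the limit'' — also fails, since Hausdorff limits of Lagrangian submanifolds need not be submanifolds at all (cf. the Birkhoff-attractor phenomena of Le Calvez cited in the paper).

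The paper fills this gap with symplectic-topological input: writing $[\cL]=[\eta]$ and composing with the fibrewise translation $\tau_{-a^k}$, the manifold $\tau_{-a^k}\circ f^k(\cL)$ is H-isotopic to the zero section, hence admits a generating function quadratic at infinity and a graph selector, i.e.\ a Lipschitz $u_k:\cQ\to\R$ with $du_k(q)\in\tau_{-a^k}\circ f^k(\cL)$ for a.e.\ $q$. Integrating $a^k\eta+du_k$ along a loop $\gamma_k$ close to a loop $\gamma$ with $\int_\gamma\eta\neq0$, and using $\int_\T d(u_k\circ\gamma_k)=0$, one finds points of $f^k(\cL)$ of norm at least $\tfrac{a^k}{2K}\bigl|\int_\gamma\eta\bigr|$, contradicting relative compactness (on $\T^n$ one can instead intersect $f^k(\cL)$ with the graph of $a^k\eta$ for a nonvanishing representative $\eta$, as the paper remarks). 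Some argument of this kind — graph selectors, generating functions, or Lagrangian intersections — is needed where your proposal currently has only a heuristic, so as written the proof has a genuine gap.
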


\begin{corollary}\label{Ccompactdiff}
Let  $f: \cM \righttoleftarrow$   be a   diffeomorphism that is CS-isotopic to ${\rm Id}_\cM$ and let  $\cL$ be a {  Lagrangian submanifold that is isotopic to the zero section among the Lagrangian submanifolds} such that $\bigcup_{k\in\Z}f^k(\cL)$ is relatively compact, then $[\cL]=\ell_f$.
\end{corollary}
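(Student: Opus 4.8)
The plan is to reduce Corollary~\ref{Ccompactdiff} to Theorem~\ref{Tcompactexactdiff} by choosing a new primitive of the symplectic form that turns $f$ into a CES diffeomorphism, and then to convert the resulting exactness statement back into the equality $[\cL]=\ell_f$ via the affine action of Proposition~\ref{Phomothety}.

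First I would fix notation: write $f^*\omega=a\,\omega$ (assuming $a\neq 1$, the standing non-symplectic hypothesis), so that $\eta=f^*\lambda-a\lambda$ is closed by Proposition~\ref{Phomothety}; since $f$ is homotopic to ${\rm Id}_{T^*\cQ}$, the class $[\eta]$ makes sense in $H^1(T^*\cQ;\R)\cong H^1(\cQ;\R)$. Unwinding the proof of Corollary~\ref{Cuniqueclasscotangent} (through Proposition~\ref{Puniqueclass}), $\ell_f$ is the unique fixed point of the homothety $x\mapsto ax+[\eta]$, namely $\ell_f=[\eta]/(1-a)$, where one uses $j_0^*\lambda=0$ on the zero section to identify $[j_0^*\eta]$ with $[\eta]$.

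Next I would pick a closed $1$-form $\bar\mu$ on $\cQ$ with $[\bar\mu]=-\ell_f$, set $\mu=\pi^*\bar\mu$ and $\lambda'=\lambda+\mu$, and check the two routine points: $-d\lambda'=\omega$, so $\lambda'$ is a legitimate primitive; and $f^*\lambda'-a\lambda'=\eta+(f^*\mu-a\mu)$ is exact, because its class is $[\eta]+(1-a)[\bar\mu]=0$ (here $f^*\mu\sim\mu$ since $f\simeq{\rm Id}$). Thus $f$ is $\lambda'$-CES. Now the three hypotheses of Theorem~\ref{Tcompactexactdiff} --- $f$ is CS-isotopic to ${\rm Id}$, $\cL$ is isotopic to the zero section among Lagrangian submanifolds, and $\bigcup_{k\in\Z}f^k(\cL)$ is relatively compact --- are all independent of the chosen primitive, so they still hold for the datum $(\cM,\omega=-d\lambda',f,\cL)$. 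Theorem~\ref{Tcompactexactdiff} then gives that $\cL$ is $\lambda'$-exact, i.e. $[\cL]_{\lambda'}=0$.

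Finally I would transfer back: since $\pi_{|\cL}:\cL\to\cQ$ is a homotopy equivalence and $\mu=\pi^*\bar\mu$ restricts on $\cL$ to $\pi_{|\cL}^*\bar\mu$, the Liouville classes for the two primitives are related by $[\cL]_{\lambda'}=[\cL]_\lambda+[\bar\mu]$; combined with $[\cL]_{\lambda'}=0$ and $[\bar\mu]=-\ell_f$ this yields $[\cL]_\lambda=\ell_f$, as claimed. Granting Theorem~\ref{Tcompactexactdiff} (which is the real content), the only delicate point is this last bit of bookkeeping --- verifying that the change of primitive $\lambda\mapsto\lambda+\pi^*\bar\mu$ shifts the Liouville class of a Lagrangian homotopic to the zero section by exactly $[\bar\mu]$, in a way that matches the fixed-point description of $\ell_f$. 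In the excluded case $a=1$, relative compactness of the orbit already forces $[\eta]=0$, since otherwise the classes $[f^k\circ j]=[j]+k[\eta]$ would be unbounded while $f^k(\cL)$ remains in a fixed compact set, so the statement degenerates appropriately.
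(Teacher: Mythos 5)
Your overall strategy coincides with the paper's: reduce to Theorem \ref{Tcompactexactdiff} by modifying the data so that $f$ becomes exact conformal symplectic, then transport the conclusion back through the affine action on Liouville classes. The paper does this by conjugating $f$ with a symplectomorphism $g$ symplectically isotopic to ${\rm Id}_\cM$ (Proposition \ref{PCES}), applying the theorem to $g\circ f\circ g^{-1}$ and $g(\cL)$, and converting back; you instead keep $f$ and change the primitive to $\lambda'=\lambda+\pi^*\bar\mu$ with $[\bar\mu]=-\ell_f$. Your identification $\ell_f=\frac{1}{1-a}[\eta]$ (via Proposition \ref{Puniqueclass} and $j_0^*\lambda=0$) and the final bookkeeping $[\cL]_{\lambda'}=[\cL]_\lambda+[\bar\mu]$ are correct.

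The one step not licensed as written is the invocation of Theorem \ref{Tcompactexactdiff} ``with the primitive $\lambda'$''. That theorem is stated, and proved, for the tautological form $\lambda$: its proof uses that a Lagrangian H-isotopic to a graph whose $\lambda$-Liouville class is $[\eta]$ is H-isotopic to the graph of $\eta$ (a property specific to the tautological primitive), together with graph selectors and the fiber norm. Since being CES is genuinely primitive-dependent, the observation that the other hypotheses are primitive-independent does not by itself justify the substitution. The gap is easy to close in either of two ways: (i) rerun the proof of the theorem with $\lambda'$, noting that the iterates then satisfy $[f^k(\cL)]_{\lambda'}=a^k[\cL]_{\lambda'}$, i.e. $[f^k(\cL)]_\lambda=a^k[\cL]_{\lambda'}-[\bar\mu]$, so the same unboundedness argument applies; or (ii), closer to the paper, conjugate by the fiberwise translation $T_{\bar\mu}(p)=p+\bar\mu(\pi(p))$, which is symplectic, satisfies $T_{\bar\mu}^*\lambda=\lambda'$, preserves both ``isotopic to the zero section among Lagrangian submanifolds'' and relative compactness of orbits, and turns $f$ into the $\lambda$-CES diffeomorphism $T_{\bar\mu}\circ f\circ T_{\bar\mu}^{-1}$; the theorem as stated then gives $[T_{\bar\mu}(\cL)]=0$, i.e. $[\cL]=-[\bar\mu]=\ell_f$. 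With that emendation your argument is a correct variant of the paper's proof, which uses the more general conjugating map $g$ from Appendix \ref{Aconfexact} instead of a fiberwise translation. (Your closing remark about $a=1$ lies outside the paper's standing non-symplectic assumption, where $\ell_f$ is not even defined, so it can simply be dropped.)
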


%\begin{theorem}\label{Tcompactexact}
%If $(\varphi_t)$ is exact conformal symplectic and $\cL$ is a Lagrangian submanifold of $T^*\cQ$ such that %$\bigcup_{t\in\R}\varphi_t(\cL)$ is relatively compact, then $\cL$ is exact.
%\end{theorem}
\begin{corollary}
Let  $(\varphi_t)$ be the flow of the    conformal symplectic vector field $X$ and let  $\cL$ be a {  Lagrangian submanifold that is isotopic to the zero section among the Lagrangian submanifolds} of $T^*\cQ$ such that $\bigcup_{t\in\R}\varphi_t(\cL)$ is relatively compact, then $[\cL]=\ell_X$.
\end{corollary}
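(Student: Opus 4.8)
The plan is to deduce this corollary about flows directly from Theorem~\ref{Tcompactexactdiff} (or equivalently from Corollary~\ref{Ccompactdiff}) by applying the discrete-time statement to a time-one map of the flow. First I would check that $f:=\varphi_1$ satisfies all the hypotheses of Theorem~\ref{Tcompactexactdiff}. Since $(\varphi_t)$ is the flow of a conformal symplectic vector field $X$, the diffeomorphism $\varphi_1$ is conformal symplectic with some ratio $a=e^\alpha$; by the Remark following the definition of CH flows, a flow of conformal exact symplectic diffeomorphisms is exactly a conformal Hamiltonian flow, and since $\varphi_1$ is the time-one map of such a flow it is in particular $\lambda$ CES. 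Moreover $(\varphi_t)_{t\in[0,1]}$ is itself a conformal symplectic isotopy from ${\rm Id}_\cM$ to $\varphi_1$, so $\varphi_1$ is CS-isotopic to ${\rm Id}_\cM$.

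Next I would relate the two compactness hypotheses. We are given that $\bigcup_{t\in\R}\varphi_t(\cL)$ is relatively compact; since $\bigcup_{k\in\Z}\varphi_1^k(\cL)\subset\bigcup_{t\in\R}\varphi_t(\cL)$, the set $\bigcup_{k\in\Z}f^k(\cL)$ is a fortiori relatively compact. The hypothesis that $\cL$ is a Lagrangian submanifold isotopic to the zero section among Lagrangian submanifolds is unchanged. Therefore Theorem~\ref{Tcompactexactdiff} applies to $f=\varphi_1$ and $\cL$, giving that $\cL$ is exact; equivalently, invoking Corollary~\ref{Ccompactdiff} for the diffeomorphism $\varphi_1$ yields $[\cL]=\ell_{\varphi_1}$.

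It remains to identify $\ell_{\varphi_1}$ with $\ell_X$ as defined in Corollary~\ref{Cuniqueclassflow}. This is where a small amount of care is needed: $\ell_X=[\ell_X(\cJ)]$ is the unique Liouville class of a $\varphi_t$-invariant (for all $t$) isotropic embedding homotopic to $j_0$, and by Corollary~\ref{Cuniqueclassflow}, since $X$ is CH, this class is $0$. On the other hand, by the last sentence of Theorem~\ref{Tcompactexactdiff}, $\cL$ is exact, i.e. $[\cL]=0$. Hence $[\cL]=0=\ell_X$, which is the desired conclusion. (If one prefers the formulation via $\ell_f$: Corollary~\ref{Ccompactdiff} gives $[\cL]=\ell_{\varphi_1}$, and applying Corollary~\ref{Cuniqueclassflow} to the single invariant class shows $\ell_{\varphi_1}$ coincides with $\ell_X$ because any $\varphi_t$-invariant submanifold homotopic to $\cZ$ is in particular $\varphi_1$-invariant, so the two uniquely determined classes agree.)

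The only genuine subtlety — and thus the main point to get right rather than a real obstacle — is verifying that $\varphi_1$ is $\lambda$ CES (not merely conformal symplectic): this uses that $i_X\omega=\alpha\lambda+dH$ implies, along the flow, that $\varphi_t^*\lambda-e^{\alpha t}\lambda$ is exact, which is precisely the Remark in section~\ref{SLiouville} stating that a conformal Hamiltonian flow consists of CES diffeomorphisms. Everything else is a routine transfer of hypotheses from the continuous-time to the discrete-time setting, so the proof is essentially a one-line reduction once this point is recorded.
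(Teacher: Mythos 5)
There is a genuine gap: you have silently strengthened the hypothesis on $X$. The corollary assumes only that $X$ is a conformal \emph{symplectic} vector field, i.e.\ $i_X\omega=\alpha\lambda+\xi$ with $\xi$ closed but not necessarily exact; it does not assume $i_X\omega=\alpha\lambda+dH$. Your main line of argument ("$\varphi_1$ is the time-one map of such a flow, hence $\lambda$ CES", and later "since $X$ is CH, this class is $0$") uses the Remark that CH flows are exactly flows of CES diffeomorphisms, but that Remark only applies when $X$ is conformal \emph{Hamiltonian}. For a general CS vector field $\varphi_1$ need not be CES, Theorem~\ref{Tcompactexactdiff} does not apply, and the conclusion you reach, $[\cL]=0=\ell_X$, is simply not the statement being proved: the corollary asserts $[\cL]=\ell_X$, where $\ell_X$ is in general a nonzero class (it vanishes only in the CH case, by Corollary~\ref{Cuniqueclassflow}).

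The correct reduction is the one in your parenthetical: apply Corollary~\ref{Ccompactdiff} (which requires no exactness of $f$, since it handles the general case through the conjugation of Proposition~\ref{PCES}) to $f=\varphi_1$, which is CS-isotopic to ${\rm Id}$ via $(\varphi_t)_{t\in[0,1]}$, and whose orbit of $\cL$ is contained in $\bigcup_{t\in\R}\varphi_t(\cL)$, hence relatively compact; this gives $[\cL]=\ell_{\varphi_1}$. But even there your identification $\ell_{\varphi_1}=\ell_X$ is not justified as written: you argue that "the two uniquely determined classes agree" because a flow-invariant submanifold is $\varphi_1$-invariant, yet both $\ell_{\varphi_1}$ and $\ell_X$ are defined as the only class an invariant submanifold \emph{may} have, and no invariant submanifold is assumed to exist. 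The identification should instead go through Proposition~\ref{Phomothety}: each $\varphi_t$ acts on Liouville classes by the affine map $c\mapsto e^{\alpha t}c+[j_0^*(\varphi_t^*\lambda-e^{\alpha t}\lambda)]$, these maps commute because the $\varphi_t$ do, and each (for $t\neq 0$) has a unique fixed point; commutativity forces the fixed point of the $\varphi_1$-action to be fixed by every $\varphi_t$, so it coincides with $\ell_X$, and hence $[\cL]=\ell_{\varphi_1}=\ell_X$.
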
 
\begin{remark} We give a proof of Theorem \ref{Tcompactexactdiff} that uses the notion of graph selector. If $\cQ$ (as $\T^n$) satisfies that every element of $H^1(\cQ, \R)\backslash\{ 0\}$  contains a non-vanishing $1$-form, we can give a simpler proof.
  Indeed, in the proof, we are reduced to prove that if we have a sequence $(\cL_n)$ of Lagrangian submanifolds such that $[ \cL_n]=k^n[\cL_0]$ tends to infinity as $n \to  \infty$, then $\bigcup_{n\in\N}\cL_n$ is not relatively compact. If the 1-form $\eta$ on $\cQ$ represents
  $[\cL_0]$, then $\cL_n$ and the graph of $k^n\eta$ intersect. As $\eta$ doesn't vanish, we can conclude.   \end{remark}

  \begin{proof}[Proof of Theorem \ref{Tcompactexactdiff}] We endow $\cQ$ with a Riemannian metric and denote by $\|.\|$ the norm on $T\cQ$. \\
  As $f$ is CES and $f^*\lambda-a\lambda$ is exact, $f^k$ is also CES with
  $$(f^k)^*\lambda-a^k\lambda=\sum_{j=0}^{k-1}a^{k-j-1}(f^j)^*(f^*\lambda-a\lambda)$$
  is exact.\\
Suppose ad absurdum that $[\cL]$ is not $0$.  Let $\eta$ be a $1$-form on $\cQ$ representing
  $[\cL]$. There is a loop $\gamma:\T\rightarrow\cQ$ such that $\int_\gamma \eta\not=0$.

As $f$ is CS-isotopic to  ${\rm Id}_\cM$ and by transitivity of the relation of CS-isotopy, $f^k$ is also CS-isotopic to  ${\rm Id}_\cM$. Hence by Corollary \ref{PHisotopytograph}, $f^k(\cL)$ is H-isotopic to a graph. The submanifold $\cL$ is H-isotopic to the graph of
  $\eta$.   A result of Proposition \ref{Phomothety} is that 
  %, so by Proposition \ref{PHisotopytograph}, 
  $f^k(\cL)$ is H-isotopic to the graph of
  $a^k\eta$. If we denote by $(\tau_t):\cM\righttoleftarrow$ the flow of symplectic diffeomorphisms $\tau_t(p)=p+t\eta(\pi(p))$, then $\tau_{-a^k}\circ f^k(\cL)$ is H-isotopic to the zero section and then admits a generating function and a graph selector that is (see e.g. \cite{Sib2004} p 98 and references herein) a Lipschitz function $u_k:\cQ\rightarrow \R$ that is $C^1$ on an open subset $\cU_0$ of $\cQ$ with full Lebesgue measure such that 
  \[ \forall q\in\cU_0, du_k(q)\in \tau_{-a^k}\circ f^k(\cL).\]
  Using Fubini theorem, we find a loop $\gamma_k$ that is $C^1$ close to $\gamma$ and such that
  \begin{itemize}
  \item $\gamma_k$ is smooth and isotopic to $\gamma$;
  \item for Lebesgue almost $s\in\T$, we have $\gamma_k(s)\in\cU_0$.
  \end{itemize}
  As $u_k\circ \gamma_k $ is Lipschitz and then absolutely continuous, we have
  $$0=\int_\T \frac{d(u_k\circ \gamma_k)}{ds}(s)ds.$$
  Because $\gamma_k(s)\in\cU_0$ for almost every $s$, we deduce
  $$0=\int_\T du_k(\gamma_k(s)).  \gamma_k'(s)ds$$
  and because $\gamma_k$ is homotopic to $\gamma$ and $\eta$ is closed,
  \[\int_\T \Big( a^k\eta(\gamma_k(s))+du_k(\gamma_k(s))\Big).  \gamma_k'(s)ds=a^k\int_\gamma\eta
  \]
  As the loops $\gamma_k$ are $C^1$-close to $\gamma$, there exists a constant $K$ that is a upper bound for all the $\| \gamma_k'(s)\|$. Hence there is a subset $E_k$ with non-zero Lebesgue measure of $\T$ such that for every $s\in E_k$, we have  
\begin{equation}\label{Egrande}\|a^k\eta(\gamma_k(s))+du_k(\gamma_k(s)\|\geq \frac{a^k}{2K}\Big\vert\int_\gamma\eta\Big\vert.\end{equation}

  Moreover, for almost every $s\in\T$, we have 
  \[du_k(\gamma_k(s))\in \tau_{-a^k}\circ f^k(\cL)\]
i.e.
\begin{equation}\label{Eselec} a^k\eta(\gamma_k(s))+ du_k(\gamma_k(s))\in  f^k(\cL).\end{equation}
  We deduce from \eqref{Egrande} and \eqref{Eselec} that there is $p\in f^k(\cL)$ such that $\| p\|\geq \frac{a^k}{2K}$.

\end{proof} 
\begin{proof}[Proof of Corollary \ref{Ccompactdiff}] Let $(f_t)_{t\in [0, 1]}$ be an isotopy of conformal symplectic diffeomorphisms such that $f_0={\rm Id}_\cM$ and $f_1=f$.    By Proposition \ref{PCES}, see Appendix \ref{Aconfexact}, we know that there is a  diffeomorphism $g: \cM \righttoleftarrow$ symplectically isotopic to ${\rm Id}_\cM$ such that   $g\circ f\circ g^{-1}$ is $\lambda$ CES. Then $(h_t)=(g\circ f_t\circ g^{-1})_{t\in [0, 1]}$ is an isotopy of conformal symplectic diffeomorphisms such that $h_0={\rm Id}_\cM$ and $h_1=g\circ f\circ g^{-1}$ is $\lambda$ CES. 

Let $\cL$ be a Lagrangian submanifold of $\cM$ that is H-isotopic to a graph such that $\bigcup_{n\in\Z}f^n(\cL)$ is relatively compact. As $(g_t)$ is a symplectic isotopy, $g(\cL)$ is H-isotopic to a graph and we can apply Theorem \ref{Tcompactexactdiff}. We deduce that $g(\cL)$ is exact.

As $g$ is symplectic, there is a closed 1-form $\eta$ on $\cQ$ such that $[\pi^*\eta]=[g^*\lambda-\lambda]$. Then the Liouville class of $g(\cL)$ is
$$[\cL]=[g(\cL)]+[\eta]=[\eta].$$
As $h_1=g\circ f\circ g^{-1}$, the fixed point $0$ of the action of $h_1$ on the set of Liouville classes is the image by $g^*$ of the fixed point of the action by $f$ on the Liouville classes. This means that $[\ell_f]=[\eta]$
\end{proof}

{ \begin{question*} Is the hypothesis on $H$-isotopy to the zero section necessary?\end{question*}}

\section{Uniqueness}\label{Suniqueness}
{
We work on the cotangent bundle $(T^*\cQ, -d\lambda)$ of a closed orientable manifold.\\                        
 Viterbo introduced in the seminal paper \cite{Vit1992}, see also \cite{Vit2008}, the spectral distance $\gamma$  that is   defined on the set of H-isotopic to the zero-section Lagrangian submanifolds.\\
  We will recall the main results of this theory and apply this to prove that if two submanifolds $\cL$, $\cL'$  are H-isotopic to the zero section and if  $(\varphi_t)$  is  a CH flow of $T^*\cQ$, then
 $$ \text{either}\quad\gamma (\varphi_t(\cL), \varphi_t(\cL'))\xrightarrow[]{t\rightarrow +\infty}+\infty\quad\text{or}\quad\gamma (\varphi_t(\cL), \varphi_t(\cL'))\xrightarrow[]{t\rightarrow -\infty}+\infty.$$
 Using a recent result due to Shelukhin, \cite{Shel2020}, we will deduce that for certain manifolds $\cQ$, e.g. tori $\T^n$, there is at most one H-isotopic to the zero section submanifold whose orbit is compact and when it exists, this submanifold is in fact invariant.
\subsection{On Viterbo spectral distance \texorpdfstring{$\gamma$}{gamma}}
If $\cL$, $\cL'$ are H-isotopic to the zero section submanifolds of $T^*\cQ$, they have quadratic at infinity {\em generating functions} $S:\cQ\times \R^k\rightarrow \R$ and $S':\cQ\times \R^{k'}\rightarrow \R$.\\
We recall that a generating function $S$ for $\cL$ is such that
\begin{itemize}
\item if we use the notation $(q, \xi)\in \cQ\times \R^k$, on $\Sigma_S=\Big(\frac{\partial S}{\partial \xi}\Big)^{-1}(0)$, $\frac{\partial S}{\partial \xi}$  has maximal rank;
\item the map $j_S:\Sigma_S \hookrightarrow T^*\cQ$ defined by $j_S(q, \xi)=\frac{\partial S}{\partial q}(q, \xi)$ is an embedding and its image is $\cL$.
\end{itemize}
The generating function is {\em quadratic at infinity} is there exists a non-degenerate quadratic form $Q:\R^k\rightarrow \R$ such that outside a compact subset of $\cQ\times \R^k$, we have $S(q,\xi)=Q(\xi)$.  \\
The function $S\ominus S':M\times \R^k\times \R^{k'}\rightarrow \R$ is defined by
$$(S\ominus S')(q, \xi, \chi)=S(q, \xi)-S'(q, \chi).$$
Observe that 
$$\cL\cap\cL'=\{\frac{\partial S}{\partial q}(q,\xi); d(S\ominus S')(q, \xi, \chi)=0\}=\{\frac{\partial S'}{\partial q}(q, \chi); d(S\ominus S')(q, \xi, \chi)=0\}.$$
The function $S\ominus S'$ is not quadratic at infinity, but it satisfies conditions of Proposition 1.6. of \cite{Vit2004} that ensure that it can be replaced by such a function, which we also denote by $S\ominus S'$.
There exists a compact set $K\subset \cQ\times \R^k\times \R^{k'}$ such that
$$\forall (q, \xi, \chi)\notin K, (S\ominus S')(q, \xi, \chi)=Q(\xi, \chi)$$
where $Q$ is a non degenerate quadratic form on $\R^k\times \R^{k'}$. We denote by $m$ its index. Moreover, there exist $a, b\in\R$ such that 
$$K\cap \Big(\{ (S\ominus S')\geq b\}\cup \{ (S\ominus S')\leq a\}\Big)=\emptyset.$$
For $c\in\R$, we denote by $\cE^c$ and $\cF^c$ the sublevels 
$$\cE^c=\{ (q, \xi, \chi); (S\ominus S')(q, \xi, \chi)\leq c\}\quad\text{and}\quad \cF^c=\{ (\xi, \chi); Q(\xi, \chi)\leq c\}.$$
As ($S\ominus S')(q, \xi, \chi)$ and $Q(\xi, \chi)$ are equal on $\cE^a$ and outside $\cE^b$,  we have
$$\forall c\notin]a, b[, \cE^c=\cQ\times \cF^c
.$$
Hence, by Kunneth theorem \cite{God1971},
there is  an isomorphism 
$$K:H(\cF^b, \cF^a)\otimes H(\cQ)\rightarrow H(\cE^b, \cE^a).$$
As $Q$ is a non-degenerate quadratic form with index $m$, we have $H^p(\cF^b, \cF^a)=\{ 0\}$ for $p\not=m$ and $H^m(\cF^b, \cF^a)=\R C$ is one dimensional. We deduce an isomorphism
$$T:\R C\otimes H^*(\cQ)\rightarrow H^{*+m}(\cE^b, \cE^a).$$
Then, if $\alpha\in H^*(\cQ)$ is non-zero, 
$$c(\alpha, S\ominus S')=\inf\{ t\in [a, b], j_t^*(C\otimes\alpha)\not=0\}$$
where $j_t:(\cE^t, \cE^a)\rightarrow (\cE^b, \cE^a)$ is the inclusion. The number $c(\alpha, S\ominus S')$ is then a critical value of $S\ominus S'$ that continuously depend on $S$ and $S'$ for the uniform $C^0$ distance.\\
 Viterbo proved that $c(\alpha, S\ominus S')$ depends only on $\cL$ and $\cL'$ and not on the choice of generating functions. It is then denoted by $c(\alpha, \cL, \cL')$.\\
 If $\mu$ is the orientation class of $\cQ$, the distance  $\gamma(\cL,\cL')$ is defined by
 $$\gamma(\cL, \cL')=c(\mu, \cL, \cL')-c(1, \cL, \cL').$$

\begin{theorem}\label{Tconformgamma} 
Let $f: \cM \righttoleftarrow$  be a CES diffeomorphism that is CH-isotopic to ${\rm Id}_{T^*\cQ}$.  
 Let $\cL$, $\cL'$ be two distinct  submanifolds of $T^*\cQ$ which are  H-isotopic to the zero section, then
$$ \text{either}\quad\gamma (f^n(\cL), f^n(\cL'))\xrightarrow[]{n\rightarrow +\infty}+\infty\quad$$ $$\text{or}\quad\gamma (f^{-n}(\cL), f^{-n}(\cL'))\xrightarrow[]{n\rightarrow +\infty}+\infty.$$

 %$$ \text{either}\quad\gamma (f^n(\cL), f^n(\cL'))\xrightarrow[]{n\rightarrow \infty}+\infty\quad\text{or}\quad\gamma (f^{-n}(\cL), f^{-n}(\cL'))\xrightarrow[]{n\rightarrow \infty}+\infty.$$
\end{theorem}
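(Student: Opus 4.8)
The key dichotomy should come from combining two ingredients: (i) the fact, from Proposition~\ref{Phomothety} together with Corollary~\ref{PHisotopytograph}, that $f^n(\cL)$ and $f^n(\cL')$ are both $H$-isotopic to graphs, with Liouville classes $a^n[\cL]+\sigma_n$ and $a^n[\cL']+\sigma_n$ where $\sigma_n=[j_0^*\eta_n]$ is a common ``drift'' term depending only on $f^n$, not on $\cL,\cL'$; and (ii) the conformality of the spectral distance under $f$. The crucial observation is that although $\gamma$ itself is defined only for pairs that are $H$-isotopic to the zero section, the \emph{difference} of the two Liouville classes, $[\cL]-[\cL']$, is scaled by $a^n$ (resp. $a^{-n}$) under $f^n$ (resp. $f^{-n}$), and one of these two sequences tends to $\infty$ in norm since $a\neq 1$. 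After translating by a fixed symplectic shift $\tau_t$ as in the proof of Theorem~\ref{Tcompactexactdiff}, the pair $(f^n(\cL),f^n(\cL'))$ becomes, up to applying the \emph{same} shift to both, a pair of submanifolds $H$-isotopic to the zero section, so $\gamma$ applies; and $\gamma$ is invariant under a common symplectic shift because $c(\alpha,\cdot,\cdot)$ only sees the difference $S\ominus S'$ of generating functions.

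\textbf{Key steps, in order.} First, I would establish the conformality law for $\gamma$ under a CES diffeomorphism: if $f^*\lambda-a\lambda=du$ with $f$ CH-isotopic to ${\rm Id}$, then for $\cL,\cL'$ both $H$-isotopic to the zero section and such that $f(\cL),f(\cL')$ are too (which, if $[\cL]=[\cL']=0$, holds by Corollary~\ref{PHisotopytograph}), one has $c(\alpha,f(\cL),f(\cL'))=a\,c(\alpha,\cL,\cL')$ up to a common additive constant coming from $u$, hence $\gamma(f(\cL),f(\cL'))=a\,\gamma(\cL,\cL')$. This uses that a CES diffeomorphism transforms a generating function $S$ for $\cL$ into a generating function for $f(\cL)$ by composing with the point transformation and adding the (same) function $u\circ\pi$ and scaling by $a$ — so the additive piece cancels in $S\ominus S'$ and only the multiplicative factor $a$ survives in the critical values. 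Second, I would reduce the general case to the zero-section case: writing $\eta,\eta'$ for closed $1$-forms representing $[\cL],[\cL']$, by Proposition~\ref{Phomothety} $f^n(\cL)$ is $H$-isotopic to ${\rm graph}(a^n\eta+\rho_n)$ and $f^n(\cL')$ to ${\rm graph}(a^n\eta'+\rho_n)$ with the \emph{same} $\rho_n$; applying the common symplectic shift $\tau$ by $-(a^n\eta+\rho_n)$ brings both to the zero-section sector simultaneously, and $\gamma$ is unchanged since it only depends on $S\ominus S'$. One gets $\gamma(f^n(\cL),f^n(\cL'))=\gamma\big(\tau_n f^n(\cL),\tau_n(\ {\rm graph}((a^n(\eta'-\eta)))\ \cdots)\big)$, which by the conformality law equals $a^n\,\gamma(\cL,\cL'')$ for an appropriate fixed $\cL''$ with $[\cL'']=[\cL']-[\cL]\neq 0$ (if $[\cL]\neq[\cL']$) — forcing blow-up in forward or backward time according to whether $a<1$ or $a>1$. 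Third, the remaining case $[\cL]=[\cL']$: then after a fixed symplectic shift both are exact, $\gamma(f^n(\cL),f^n(\cL'))=a^n\gamma(\cL,\cL')$, and since $\cL\neq\cL'$ implies $\gamma(\cL,\cL')>0$ (nondegeneracy of the spectral distance, Viterbo), one of the two sequences again diverges.

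\textbf{Main obstacle.} The delicate point is the conformality law $\gamma(f(\cL),f(\cL'))=a\,\gamma(\cL,\cL')$ and, more precisely, keeping careful track of the \emph{additive constant} contributed by the primitive $u$ with $f^*\lambda-a\lambda=du$: one must verify that when $f$ acts on generating functions, the cocycle formula $(f^k)^*\lambda-a^k\lambda=\sum_{j=0}^{k-1}a^{k-j-1}(f^j)^*(f^*\lambda-a\lambda)$ (already used in the proof of Theorem~\ref{Tcompactexactdiff}) produces a \emph{globally defined} function on $\cQ$ that is added identically to both $S$ and $S'$, so that it disappears from $S\ominus S'$ and hence from all critical values $c(\alpha,\cdot)$. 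A secondary subtlety is ensuring the generating-function calculus is compatible with the passage through Viterbo's Proposition~1.6 replacement (making $S\ominus S'$ quadratic at infinity) uniformly along the isotopy, so that $c(\alpha,f^n(\cL),f^n(\cL'))$ genuinely equals Viterbo's invariant $c(\alpha,f^n(\cL),f^n(\cL'))$ and the scaling identity is legitimate. Once these bookkeeping points are settled, the dichotomy is immediate from $a\neq 1$.
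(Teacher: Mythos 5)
Your overall skeleton (establish a scaling law $\gamma(f(\cL),f(\cL'))=a\,\gamma(\cL,\cL')$, iterate, and conclude from Viterbo's nondegeneracy $\gamma(\cL,\cL')>0$ together with $a\neq 1$) is exactly the structure of the paper's argument, and your use of Corollary~\ref{PHisotopytograph} to keep $\gamma$ defined along the orbit is correct. But the mechanism you propose for the key scaling law does not work. You claim that a CES diffeomorphism ``transforms a generating function $S$ for $\cL$ into a generating function for $f(\cL)$ by composing with the point transformation and adding the (same) function $u\circ\pi$ and scaling by $a$.'' A CES diffeomorphism of $T^*\cQ$ is in general not fibered over a diffeomorphism of $\cQ$, so there is no ``point transformation'' to compose with, and $aS+u\circ\pi$ (in whatever coordinates) is not a generating function of $f(\cL)$; even in the exact symplectic case $a=1$ the image of a Lagrangian under a Hamiltonian diffeomorphism is handled by composition formulas for generating functions, not by adding a function of the base. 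So the central identity $c(\alpha,f(\cL),f(\cL'))=a\,c(\alpha,\cL,\cL')$ ``up to a common additive constant'' is unproved in your write-up, and the bookkeeping you flag as the ``main obstacle'' is precisely the missing proof.

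The paper's route around this is different and worth noting: one never tracks how $f$ acts on $S$ and $S'$ individually. Instead, after reducing by $C^0$-continuity of the spectral invariants to the case where $\cL$ and $\cL'$ intersect transversally (finitely many critical values of $S\ominus S'$), one observes that a \emph{difference} of critical values $\Delta(x,y,\cL,\cL')$ equals $\int_{\eta_1\vee\eta_2}\lambda=\int_\cD\omega$ for a disc $\cD$ bounded by paths in $\cL$ and $\cL'$; under a conformal isotopy $(\phi_t)$ with $\phi_t^*\omega=a(t)\omega$ this area scales exactly by $a(t)$. Then $t\mapsto \frac{1}{a(t)}\bigl(c(\mu,\phi_t(\cL),\phi_t(\cL'))-c(1,\phi_t(\cL),\phi_t(\cL'))\bigr)$ is continuous with values in a fixed finite set, hence constant, which gives $\gamma(\phi_t(\cL),\phi_t(\cL'))=a(t)\gamma(\cL,\cL')$ (Lemma~\ref{Lconformgamma}); this is where the hypothesis that $f$ is CH-isotopic to the identity is used. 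Separately, your second step (the case $[\cL]\neq[\cL']$ and the common shifts $\tau$) is vacuous here: submanifolds H-isotopic to the zero section are exact, so $[\cL]=[\cL']=0$ by hypothesis, and no reduction by translations is needed. As written, your proposal has a genuine gap at the scaling law; with the paper's area/continuity argument substituted for your generating-function transformation, the rest of your conclusion goes through.
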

\begin{corollary}\label{Cunicityinavariant}
Let $f:\cM \righttoleftarrow$  be a CES diffeomorphism that is CH-isotopic to ${\rm Id}_{T^*\cQ}$. Then there exists at most one H-isotopic to the zero section submanifold of $T^*\cQ$ that is invariant by $f$.
\end{corollary}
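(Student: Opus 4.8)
The plan is to deduce Corollary~\ref{Cunicityinavariant} directly from Theorem~\ref{Tconformgamma}. Suppose $\cL$ and $\cL'$ are two distinct submanifolds of $T^*\cQ$, each H-isotopic to the zero section, and each invariant by $f$, i.e.\ $f(\cL)=\cL$ and $f(\cL')=\cL'$. Then for every $n\in\Z$ we have $f^n(\cL)=\cL$ and $f^n(\cL')=\cL'$, so $\gamma(f^n(\cL),f^n(\cL'))=\gamma(\cL,\cL')$ is a constant independent of $n$.

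On the other hand, Theorem~\ref{Tconformgamma} asserts that for two \emph{distinct} such submanifolds, either $\gamma(f^n(\cL),f^n(\cL'))\to+\infty$ as $n\to+\infty$, or $\gamma(f^{-n}(\cL),f^{-n}(\cL'))\to+\infty$ as $n\to+\infty$. In either case the sequence is unbounded, contradicting the fact that it is constant. (One should note that $\gamma(\cL,\cL')>0$ since $\cL\neq\cL'$ — nondegeneracy of the Viterbo distance — but even without that, the mere contradiction between ``constant'' and ``tends to $+\infty$'' suffices.) Hence no two distinct invariant submanifolds H-isotopic to the zero section can exist, which is exactly the claim.

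There is essentially no obstacle here: the corollary is a one-line consequence of the theorem, the only thing to check being that $f^n(\cL)=\cL$ for all $n$ when $\cL$ is invariant, and that a constant sequence cannot tend to $+\infty$. The substantive work — establishing the monotonicity/dichotomy of $\gamma$ under the action of $f$, which rests on the triangle inequality for $\gamma$, its behavior under conformal symplectic maps (scaling of the generating function by the conformality ratio $a$), and the fact that $f$ is CH-isotopic to the identity so that Corollary~\ref{PHisotopytograph} keeps the orbit inside the class of submanifolds H-isotopic to the zero section — is entirely contained in the proof of Theorem~\ref{Tconformgamma}, which we are entitled to assume.

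\begin{proof}[Proof of Corollary~\ref{Cunicityinavariant}]
Assume, for contradiction, that $\cL$ and $\cL'$ are two distinct submanifolds of $T^*\cQ$, both H-isotopic to the zero section and both invariant by $f$. Since $f(\cL)=\cL$ and $f(\cL')=\cL'$, we have $f^n(\cL)=\cL$ and $f^n(\cL')=\cL'$ for every $n\in\Z$, whence the sequence $\big(\gamma(f^n(\cL),f^n(\cL'))\big)_{n}$ is constant, equal to $\gamma(\cL,\cL')$. But Theorem~\ref{Tconformgamma} applied to the distinct submanifolds $\cL$, $\cL'$ yields that either $\gamma(f^n(\cL),f^n(\cL'))\to+\infty$ as $n\to+\infty$ or $\gamma(f^{-n}(\cL),f^{-n}(\cL'))\to+\infty$ as $n\to+\infty$; in both cases this sequence is unbounded, a contradiction. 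Therefore $\cL=\cL'$, and there is at most one invariant submanifold H-isotopic to the zero section.
\end{proof}
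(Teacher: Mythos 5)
Your proof is correct and follows exactly the route the paper intends: the corollary is stated as an immediate consequence of Theorem~\ref{Tconformgamma} (the paper gives no separate written proof), and your argument — invariance makes $\gamma(f^n(\cL),f^n(\cL'))$ constant, while the theorem forces it to be unbounded in forward or backward time unless $\cL=\cL'$ — is precisely that intended deduction.
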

\begin{proof}[Proof of Theorem \ref{Tconformgamma}] This is  direct application of the following result of which we provide a proof.
\begin{lemma}\label{Lconformgamma}
Let $\cL$, $\cL'$ be two H-isotopic to the zero section   submanifolds of $T^*\cQ$. Let $(\phi_t)$ be an isotopy of exact conformal symplectic diffeomorphisms of $T^*\cQ$ such that $\phi_0={\rm Id}_{T^*\cQ}$ and $\phi_t^*\omega=a(t)\omega$. Then
$$\gamma(\phi_t(\cL), \phi_t(\cL'))=a(t)\gamma(\cL, \cL').$$
\end{lemma}\begin{proof} As the distance $\gamma$  continuously depends on the generating functions, we only need to prove the results for submanifolds $\cL$ and $\cL'$ whose intersections are all transverse. In this case, there is only a finite number of critical points and critical values for $S\ominus S'$. If $x, y\in\cL\cap\cL'$, we denote by $\Delta(x, y, \cL, \cL')$ the difference of the corresponding critical values of $S\ominus S'$, i.e.
$$\Delta(x, y, \cL, \cL')=\Big(S\circ j_S^{-1}(y)-S'\circ j_{S'}^{-1}(y)\Big)-\Big(S\circ j_S^{-1}(x)-S'\circ j_{S'}^{-1}(x)\Big).$$
Then if $\eta_1$ is a path in $\cL$ joining $x$ to $y$ and $\eta_2$ a path in $\cL'$ joining $y$ to $x$,   the difference of the two corresponding critical values of $S\ominus S'$  is
$$\Delta(x, y, \cL, \cL')=\int_{\eta_1\vee\eta_2}\lambda.$$
We can always  choose $\eta_1$ and $\eta_2$ that are homotopic with fixed ends. Then, if $\cD$ is a disc with boundary $\eta_1\vee\eta_2$, we have
$$\Delta(x, y, \cL, \cL')=\int_{\cD}\omega.$$
The  intersection points of $\phi_t(\cL)$ and $\phi_t(\cL')$ are the points $\phi_t(x)$ with $x\in\cL\cap \cL'$. For $x$, $y$ in $\cL\cap\cL'$, we have 
$$\Delta(\phi_t(x), \phi_t(y), \phi_t(\cL), \phi_t(\cL'))=\int_{\Phi_t(\cD)}\omega=a(t)\int_\cD\omega=a(t)\Delta(x, y, \cL, \cL').$$
Hence $t\mapsto \frac{1}{a(t)}\Big(c(\mu, \phi_t(\cL), \phi_t(\cL'))-c(1, \phi_t(\cL), \phi_t(\cL'))\Big)$ is a continuous map that takes its values in a fixed finite set, it has to be constant.
\end{proof}

\end{proof}
\subsection{An application of a result due to Shelukhin}
\begin{theorem}   Let $f:T^*\T^n \righttoleftarrow$  be a CES diffeomorphism that is CH-isotopic to ${\rm Id}_{T^*\T^n}$.  
Then there exists at most one H-isotopic to the zero section submanifold $\cL$ such that
$$\bigcup_{k\in\Z}f^k(\cL)\quad\text{is}\quad\text{relatively}\quad\text{compact}.$$
Hence when it exists, $\cL$ is invariant by the $f$.
\end{theorem}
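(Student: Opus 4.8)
The plan is to combine Theorem~\ref{Tconformgamma} with Shelukhin's theorem on the boundedness of the $\gamma$-distance. First I would recall Shelukhin's result \cite{Shel2020}: on $T^*\T^n$, the Viterbo distance $\gamma$ is \emph{bounded} on the space of Lagrangian submanifolds that are H-isotopic to the zero section; equivalently, there is a constant $C=C(\cQ)$ with $\gamma(\cL,\cL')\leq C$ for all such $\cL,\cL'$. (This is the Lagrangian analogue of his $\gamma$-boundedness/non-degeneracy results, and it is precisely what fails for a general base $\cQ$, which is why the statement is restricted to tori.)

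The core argument is then a short contradiction. Suppose $\cL$ and $\cL'$ are two distinct H-isotopic-to-the-zero-section submanifolds of $T^*\T^n$, each with relatively compact full orbit under $f$. Since $f$ is CES and CH-isotopic to ${\rm Id}$, Corollary~\ref{PHisotopytograph} guarantees that $f^n(\cL)$ and $f^n(\cL')$ remain H-isotopic to a graph for every $n\in\Z$ (and in fact to the zero section, since the isotopy is conformal Hamiltonian — so they lie in the class on which $\gamma$ is defined and Shelukhin's bound applies). By Theorem~\ref{Tconformgamma}, either $\gamma(f^n(\cL),f^n(\cL'))\to\infty$ as $n\to+\infty$ or $\gamma(f^{-n}(\cL),f^{-n}(\cL'))\to\infty$ as $n\to+\infty$. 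In either case this contradicts the uniform bound $\gamma\leq C(\T^n)$ from Shelukhin's theorem. Hence $\cL=\cL'$, proving uniqueness.

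For the last sentence — that when such an $\cL$ exists it is automatically $f$-invariant — I would note that if $\cL$ has relatively compact orbit, then so does $f(\cL)$ (its orbit is a subset of the orbit of $\cL$), and $f(\cL)$ is again H-isotopic to the zero section by Corollary~\ref{PHisotopytograph}. So $f(\cL)$ is also a submanifold satisfying the hypothesis of the theorem; by the uniqueness just established, $f(\cL)=\cL$, i.e. $\cL$ is invariant.

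The only genuine obstacle is making sure Shelukhin's theorem is invoked in exactly the right form: one needs the boundedness of $\gamma$ on the space of H-isotopic-to-the-zero-section Lagrangians of $T^*\T^n$ (not merely, say, on a Hofer-bounded or Hamiltonian-isotopic-to-a-\emph{fixed}-graph subfamily), and one must check that the relative compactness of $\bigcup_k f^k(\cL)$ — together with the fact, from Theorem~\ref{Tconformgamma}'s proof via Lemma~\ref{Lconformgamma}, that $\gamma$ scales by the conformality ratio — is incompatible with that bound. Since Lemma~\ref{Lconformgamma} gives $\gamma(f^n(\cL),f^n(\cL'))=a^n\gamma(\cL,\cL')$ with $a\neq 1$, one of the two one-sided limits is $+\infty$ as soon as $\gamma(\cL,\cL')\neq 0$, and that is all that is needed.
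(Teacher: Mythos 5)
Your overall strategy (Theorem~\ref{Tconformgamma} plus Shelukhin plus compactness, and the final invariance argument via Corollary~\ref{PHisotopytograph}) is the same as the paper's, but there is a genuine gap in the form in which you invoke Shelukhin's theorem. The statement you use --- that $\gamma$ is uniformly bounded on the whole space of Lagrangian submanifolds of $T^*\T^n$ that are H-isotopic to the zero section --- is false: for the zero section $\cZ$ and the graph of $df$ one has $\gamma(\cZ,\mathrm{graph}(df))=\max f-\min f$, which is unbounded. (A telltale sign that something is off is that with your version of the bound the hypothesis of relative compactness of $\bigcup_k f^k(\cL)$ would play no role at all, and the argument would ``prove'' that $T^*\T^n$ contains at most one H-isotopic-to-the-zero-section Lagrangian.) What Shelukhin actually provides, and what the paper quotes, is a bound $\gamma(\cL_0,\cL_1)\leq C(g)$ only for exact Lagrangians contained in the \emph{unit codisk bundle} $D^*(g)$.

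The missing step, which the paper supplies, is to upgrade this to submanifolds of arbitrary size by a rescaling argument: the Liouville flow $\varphi^\lambda_t(p)=e^{-t}p$ is exact conformal symplectic, scales $\gamma$ by $e^{-t}$ (Lemma~\ref{Lconformgamma}), and maps $D^*(g)$ onto the codisk bundle of radius $e^{-t}$; this yields
$$\gamma(\cL,\cL')\leq 2C(g)\max\{\delta_g(\cL),\delta_g(\cL')\},$$
where $\delta_g(K)$ is the smallest radius of a codisk bundle containing $K$ (inequality \eqref{Egammanorm} in the paper). Only now does the relative compactness of the two orbits enter: it bounds $\delta_g(f^k(\cL))$ and $\delta_g(f^k(\cL'))$ uniformly in $k$, hence bounds $\gamma(f^k(\cL),f^k(\cL'))$ uniformly, contradicting Theorem~\ref{Tconformgamma} unless $\cL=\cL'$. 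Your closing paragraph gestures at the right ingredients (scaling plus compactness versus the bound), but as written the key quantitative link between them is replaced by an incorrect uniform bound, so the argument as stated does not close. The invariance argument in your last step is correct and is the one used in the paper.
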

\begin{proof} In \cite{Shel2020}, Shelukhin defines a notion of string-point invertible manifold.  The tori $\T^n$ are examples of such manifolds. His result implies
\begin{theorem*}[Shelukhin,\cite{Shel2020}] Let $g$ be a Riemannian metric
on $\T^n$. Then there exists a constant $C(g)$ such that for all exact Lagrangian submanifolds
$\cL_0$, $\cL_1$  contained in the unit codisk bundle $D^*(g)\subset T^*\T^n$, we have
$\gamma (\cL_0, \cL_1) \leq C(g)$.

\end{theorem*}
The Liouville vector field $Z_\lambda$ that is defined by $i_{Z_\lambda}\omega=\lambda$ satisfies 
$$L_{Z_\lambda}\omega=d\lambda=-\omega.$$
Hence its flow $(\varphi^\lambda_t)$ is conformal symplectic with $\Big(\varphi^\lambda_{t}\Big)_*\omega=e^{-t}\omega$ and even exact conformal symplectic because it preserves the zero section (and then the zero Liouville class). We have seen  in Lemma \ref{Lconformgamma} that $\varphi^\lambda_t$ alters the distance $\gamma$ up to the scaling factor $e^{-t}$. \\
Observe also that this flow is  a homothety the fiber direction: $\varphi^\lambda_t(p)=e^{-t}p$. Hence the image of the unit codisk bundle $D^*(g)$ by $\varphi_t$ is the   codisk bundle $D_{e^{-t}}^*(g)$ with radius $e^{-t}$.  \\
Let us introduce the following notation for $K\subset T^*\T^n$.
$$\delta_g(K)=\min\{r\geq 0; K\subset D^*_r(q)\}.$$
Finally, we have that for every H-isotopic to the zero section submanifolds $\cL$, $\cL'$ of $T^*\T^n$, 
\begin{equation}\label{Egammanorm}\gamma(\cL, \cL')\leq 2C(g)\max\{\delta_g(\cL), \delta_g(\cL')\}.\end{equation}
If now $\cL$ and $\cL'$ are two distinct H-isotopic to the zero section submanifolds of $T^*\T^n$ and $f: T^*\T^n \righttoleftarrow$  is a CES diffeomorphism that is CH isotopic to ${\rm Id}_{T^*\T^n}$, we deduce from Theorem \ref{Tconformgamma} that 
$$ \text{either}\quad\gamma (f^n(\cL), f^n(\cL'))\xrightarrow[]{n\rightarrow +\infty}+\infty\quad$$ $$\text{or}\quad\gamma (f^{-n}(\cL), f^{-n}(\cL'))\xrightarrow[]{n\rightarrow +\infty}+\infty.$$
By \eqref{Egammanorm}, one of the two sets 
$$\bigcup_{k\in\Z}f^k(\cL);\quad  \bigcup_{k\in\Z}f^k(\cL').$$
is not relatively compact. We deduce that there is at most one $\cL$ H-isotopic to the zero section such that  $$\bigcup_{k\in\Z}f^k(\cL)\quad\text{is}\quad\text{relatively}\quad\text{compact}.$$
When $\cL$ is H-isotopic to the zero-section, $f(\cL)$ is also $H$-isotopic to the zero-section because $f$ is CH-isotopic to ${\rm Id}_{T^*\T^n}$, see Corollary \ref{PHisotopytograph}. Moreover, the orbits of $\cL$ and $f(\cL)$ coincide. This implies that $\cL=f(\cL)$.

\end{proof}
}

%On $T^*\T^n$:

%If $\cL$ is H-isotopic to the zero section, its $\Gamma$ distance is bounded.

%If $\cL$ has a compact orbit, $\cL$ is thus invariant. 
\section{Examples}\label{AExamples}
\subsection{Ma\~n\'e example}\label{SecMan}  This example was introduced by Ma\~n\'e in the conservative Hamiltonian setting, \cite{Man1992}. It can be extended to the conformal symplectic setting. For every vector field $X$ of a closed manifold $\cQ$, it  provides a conformal Hamiltonian Tonelli flow of $T^*\cQ$ such that the zero section is invariant and the flow restricted to this zero section is conjugated to the flow of $X$.

Let $\cQ$ be a closed manifold endowed with a Riemannian metric, $T^*\cQ$ is endowed with its tautological $1$-form $\lambda$ and the symplectic form $\omega=-d\lambda$. We denote by $\|.\|$ the norm on the fibers of $T^*\cQ$ that is dual to the Riemanninan norm of $\cQ$ and by $p_q$ a point of $T^*\cQ$ above $q \in \cQ$.

If $X$ is a vector field on $\cQ$, we denote by $p_X$ the 1-form on $\cQ$ that is dual to $X$ via the Riemannian scalar product.  We define the Hamiltonian $$H_X(p_q)=\frac{1}{2}\| p_q+p_X(q)\|^2-\frac{1}{2}\| p_X(q)\|^2.$$

Since the zero-section $\cZ=\{ p=0\}$ is contained in the zero-energy level and is Lagrangian, $\cZ$  is invariant by the Hamiltonian flow of $H_X$. The restriction to $\cZ$ of the vector field is dual via $\omega$ to the derivative of $H$ in the fiber direction, so if we denote by $\sharp: T_q^*M\rightarrow T_qM$ the duality that is defined by the Riemannian metric, we have
$$\dot q_{|\cZ} = \sharp\big(p+p_X(q)\big)_{|\cZ}=\sharp p_X(q)=X(q).$$
Hence on the zero-section, the vector field is $X$. 

In the conformal Hamiltonian setting, we add $\alpha$ times the Liouville vector field to the Hamiltonian vector  field $X_H$ of $H$, for some $\alpha \in \R$. Since the Liouville vector field vanishes on $\cZ$, the dynamics remains conjugate to $X$.

\begin{remark}
    The global attractor may differ from the zero section. For example, $X$ may have an attractive fixed point whose unstable manifold is not contained in the $\cZ$, in which case the global attractor is not a submanifold either. 
\end{remark}

\subsection{An example of a Tonelli Hamiltonian that has an invariant Lagrangian submanifold that is not a graph}\label{secBir}

The example we are about to describe is inspired by an example of Le Calvez \cite{LeCalvez1988}.

Let $\beta>0$ be a positive number and let $\alpha\in(\beta, 2\beta)$. On $T^*\R=\R^2$, let $H$ be the quadratic Tonelli Hamiltonian 
$$H(x, y)=y^2-\beta xy.$$
Consider the sum of the Hamiltonian vector field of $H$ and of $\alpha$ times the Liouville vector field $-y\, \partial_y$:
\begin{equation}\label{EPat1}\begin{cases}
  \dot x=-\beta x+2y\\
  \dot y=(\beta-\alpha) y.
\end{cases}\end{equation}
The matrix of this linear system is $\begin{pmatrix} -\beta & 2 \\
0&\beta-\alpha
\end{pmatrix}.$ Hence  $\begin{pmatrix} 1\\0
\end{pmatrix}$ is an eigenvector for the eigenvalue $-\beta$ and $\begin{pmatrix} 1\\ \beta-\frac{\alpha}{2}\end{pmatrix}$ is an eigenvector for the eigenvalue $\beta-\alpha$. As $\alpha\in(\beta, 2\beta)$, $(0,0)$ is an attracting fixed point and the line $\R\begin{pmatrix} 1\\ 0\end{pmatrix}$ is the strong stable eigenspace.  Every solution that is not contained in an eigenspace is contained in a curve whose equation is
$$x=\frac{2}{2\beta-\alpha}y+K\vert y\vert^\frac{\beta}{\alpha-\beta}$$
where $K\not=0$, and then is not a graph if $x(0).y(0)>0$. 

\begin{figure}
    \begin{tikzpicture}
        \draw[->] (-3.5,0) -- (3.5,0) node[right] {$x$};
        \draw[->] (0,-2) -- (0,2) node[above] {$y$};
        \draw[domain=-1.7:1.7][samples=200] plot (\x-\x^3, \x);
    \end{tikzpicture}
\end{figure}

Let us choose two large real numbers $B>A>0$ and let $V:\R\rightarrow [-1, 0]$ be a function with support in $[-B, B]$ such that $V_{|[-A, A]}=-1$, $V_{[-B, -A]}$ is non-increasing and $V_{|[A, B]}$ is non-decreasing. Then we add $V(x)$ to $H(x,y)$ and the equations become
\begin{equation}\label{EPat2}\begin{cases}
  \dot x=-\beta x+2y\\
  \dot y=-V'(x)+(\beta-\alpha) y.
\end{cases}\end{equation}
As the support of $V'$ is in $[-B, -A]\cup [A,B]$, the two vector fields are equal in the complement of $([-B, -A]\cup [A, B])\times \R$. As $V'_{|[-B, -A]}\leq 0$, the orbit on the $x$-axis for $x\leq -B$  is pushed to the half plane $y>0$ and then coincides with an orbit of \eqref{EPat1} which tends to $(0, 0)$. In the same way, the orbit that coincides with the $x$-axis for $x\geq B$ tends to $(0, 0)$  at $+\infty$ with an incursion into the half-plane $y<0$. Hence the union of these two orbits and $\{ (0, 0)\}$ is an invariant curve $\Gamma$ for \eqref{EPat2} that is not a graph.

Now, let us choose $D>C>B$. Le $X:\R\rightarrow \R$ a vector field such that
\begin{itemize}
    \item $\forall x\in [-\frac{D+C}{2}, -B]\cup [B, \frac{C+D}{2}], X(x)=-\beta x$;
    \item $X(-D)=X(D)=0$ and all the derivatives of $X$ are the same at $-D$ and $D$;
    \item $(-D, -B]$ (resp. $[B, D)$) is a piece of unstable manifold of the equilibrium $-D$ (resp. $D$).
\end{itemize}
Then $X$ defines also a vector field on the circle ${\mathcal C}_D=[-D, D]/D\sim -D$. Let $H_X$ be the Hamiltonian that is associated to $X$ on $T^*\R=\R^2$ via the Ma\~n\'e construction
$$H_X(x, y)=\frac{1}{2}y(y+2X(x)).$$

Let us eventually define 
$$K(x,y)=(1-\eta (x))H_X(x,y)+\eta(x)\big(H(x,y)+V(x)\big)$$
$$=\frac{1-\eta (x)}{2}y(y+2X(x))+\eta(x)\big(y^2-\beta xy+V(x)\big),$$
where $\eta:\R\rightarrow [0, 1]$ is a bump function with support in $[-C, C]$ that is equal to 1 on $[-B, B]$. $K$ also defines a Hamiltonian function on the annulus $ {\mathcal C}_D\times \R$ and, since
$$\frac{\partial^2K}{\partial y^2}(x, y)=(1-\eta(x))+2\eta(x)\geq 1$$
hence $K$ is Tonelli.

Note the following:
\begin{itemize}
    \item $([-D, -B]\cup[B, D])\times\{ 0\}$ is in the zero level of $K$ and then is locally invariant by the Hamiltonian flow of $K$ and also by the conformal Hamiltonian flow $(\frac{\partial K}{\partial y}, -\frac{\partial K}{\partial x}-\alpha y)$;
    \item $K_{|[-B, B]\times \R}=(H+V)_{|[-B, B]\times \R}$.
\end{itemize}
Finally, the vector field $(\frac{\partial K}{\partial y}, -\frac{\partial K}{\partial x}-\alpha y)$ has an invariant curve that is not a graph, which is the union of $([-D, -B]\cup[B, D])\times\{ 0\}$ and the part of $\Gamma$ that is between $x=-B$ and $x=B$.

\appendix

\section{Yomdin's inequality}
\label{AYomdin}

Let $\cL$ be a a compact Riemannian $C^r$ manifold, $\cS \subset \cL$ be a
compact $C^r$ submanifold of dimension $s$ and $f : \cL \righttoleftarrow$ be a { $C^r$-diffeomorphism}
($r \geq 1$). (The general statement does not require $f$ to be invertible.)

Define
the logarithmic volume growth of $f_{|\cS}$ as
\[\logvol(f_{|\cS}) = \limsup_{n \to +\infty} \frac{1}{n}  \log \left| \vol(f^{n}(\cS)\right|,\]
where $\vol$ is the $s$-dimensional Riemannian volume,
and
\[\rad (Df) = \limsup_{n \to +\infty} \|Df^n\|_\infty^{1/n}, \quad \|Df\|_\infty = \sup_x \|Df_x\|.\]

\begin{theorem}[Yomdin~\cite{Yom1987}, Gromov~\cite{Gromov1987}]
    \[\logvol (f_{|\cS}) \leq \ent (f) +
    \log^+\left(\rad(Df)^{s/r}\right)\]
    (where $\log^+ t = \max (0, \log t)$).
    
    In particular, if $\cL$ and $f$ are smooth,
    \[\logvol (f_{|\cS}) \leq \ent (f_{|\cS}) \leq \ent f.\]  
\end{theorem}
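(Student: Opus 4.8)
The plan is to reconstruct the theorem of Yomdin \cite{Yom1987} and Gromov \cite{Gromov1987}: the statement is invoked, not reproved, in the body of the paper, so I indicate only the structure of the argument, whose substance is local and combinatorial. Fix a small $\e>0$; the target is the estimate
\[\logvol(f_{|\cS}) \le \ent(f) + \log^+\!\big(\rad(Df)^{s/r}\big) + \e,\]
after which one lets $\e\to 0$, and the smooth case then follows by letting $r\to\infty$. \emph{Step 1 (reduction to a parametrised local count).} Cover $\cS$ by finitely many $C^r$ charts $\phi:Q\to\cL$, $Q=[0,1]^s$, with $\|\phi\|_{C^r}\le 1$ in a fixed finite atlas of $\cL$. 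Since $\vol(f^n(\cS))\le\sum_\phi \vol(f^n\!\circ\phi(Q))$ up to a metric constant, and since any $C^1$ map $Q\to\cL$ of $C^1$-norm $\le 1$ has image of volume at most a constant $c_0=c_0(\cL)$, it suffices, for each $n$, to exhibit $C^r$ maps $\psi_1,\dots,\psi_{N_n}:Q\to Q$ with $\bigcup_i\psi_i(Q)=Q$ and $\|f^n\!\circ\phi\circ\psi_i\|_{C^r}\le 1$, and to bound $N_n$; then $\vol(f^n\!\circ\phi(Q))\le c_0 N_n$.

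\emph{Step 2 (the Yomdin--Gromov algebraic reparametrisation lemma --- the hard step).} One needs a constant $C_1=C_1(s,r,\dim\cL)$ such that every $C^r$ map $g:Q\to\R^{\dim\cL}$ with $\|g\|_{C^r}\le A$ admits a covering of $Q$ by the images of at most $\max(1,C_1A^{s/r})$ maps $\psi:Q\to Q$ of class $C^r$ with $\|g\circ\psi\|_{C^r}\le 1$. This is proved by approximating $g$ by polynomials of bounded degree, using semialgebraic cell decomposition and Bézout-type counts to cut $Q$ into a controlled number of cells on which, after an affine rescaling, the top-order derivative becomes small, and then inducting on the order of differentiation. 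I would import this verbatim from \cite{Yom1987,Gromov1987}. Its decisive feature is the exponent $s/r$, which replaces the trivial $s$ coming from the crude bound $\vol(f^n(\cS))\le\|Df^n\|_\infty^{\,s}\vol(\cS)$.

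\emph{Step 3 (dynamical bookkeeping: entropy times derivative growth).} Fix a finite cover $\{U_1,\dots,U_M\}$ of $\cL$ by charts of small diameter and iterate Step 2 along the dynamics. Starting from $\phi$ (of $C^r$-norm $\le 1$, with image in some chart), reparametrise $f\circ\phi$; each resulting piece has $C^r$-norm $\le 1$ and small image, hence lies in some $U_{j_1}$, which we record; repeat $n$ times, recording at step $k$ the index $j_k$ of a chart containing the current piece. Each of the $N_n$ surviving pieces $\psi$ then satisfies $\|f^n\!\circ\phi\circ\psi\|_{C^r}\le1$ and carries an itinerary $(j_1,\dots,j_n)$. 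The reparametrisation overhead at step $k$ is controlled by the local $C^r$-size of $f$ over $U_{j_k}$, whose leading part is $\|Df\|$ there, so the number of pieces sharing a fixed itinerary grows, along an orbit realising it, at rate at most $\frac{s}{r}\log^+\rad(Df)$ --- here one uses that $\limsup_n\frac1n\log\|Df^n\|_\infty=\log\rad(Df)$, together with small charts and large $n$ to absorb the higher-order nonlinear contributions into $\e$; this orbit-adapted estimate is a second delicate point, carried out in \cite{Yom1987}. On the other hand, pieces with distinct itineraries whose base points stay dynamically close number at most a $(n,\delta)$-spanning set of $f$, so the number of realised itineraries grows at rate at most $\ent(f)+\e$ once the charts are small. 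Multiplying the two counts gives $\limsup_n\frac1n\log N_n\le \ent(f)+\log^+(\rad(Df)^{s/r})+2\e$.

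\emph{Step 4 (conclusion and the smooth case).} Combining Steps 1 and 3 yields $\logvol(f_{|\cS})\le\ent(f)+\log^+(\rad(Df)^{s/r})+2\e$, and $\e\to0$ gives the stated inequality. If $\cL$ and $f$ are $C^\infty$, apply it for every integer $r\ge1$: $\rad(Df)$ being finite and independent of $r$, the correction term $\log^+(\rad(Df)^{s/r})\to0$, whence $\logvol(f_{|\cS})\le\ent f$; the intermediate quantity in the statement is to be read as the topological entropy of $f$ on the relevant invariant set (equal to $\ent f$ when no proper invariant subset is singled out, and to $\ent(f_{|\cL})$ when $\cS$ lies in an invariant submanifold $\cL$), and the last inequality is the elementary monotonicity of topological entropy under restriction to an invariant subset. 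The main obstacle is Step 2, the algebraic reparametrisation lemma; the orbit-adapted refinement in Step 3 required to obtain the sharp exponent $\rad(Df)^{s/r}$ rather than $\|Df\|_\infty^{\,s/r}$ is the secondary difficulty, and everything else is routine.
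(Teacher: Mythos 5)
The paper does not prove this theorem at all: it is quoted as a known result of Yomdin and Gromov, and your sketch faithfully reproduces the standard structure of their argument (chart reduction, the Yomdin--Gromov algebraic reparametrisation lemma imported from the same references, the entropy/itinerary bookkeeping, and the $r\to\infty$ limit plus monotonicity of entropy for the smooth case), so it is consistent with the paper's treatment. One minor remark: the sharpening from $\|Df\|_\infty^{s/r}$ to $\rad(Df)^{s/r}$ is usually obtained more simply by applying the crude inequality to the iterates $f^m$, using $\ent(f^m)=m\,\ent(f)$ and $\frac1m\log^+\|Df^m\|_\infty\to\log^+\rad(Df)$, rather than by the orbit-adapted refinement you invoke in Step 3, but this does not affect the correctness of the outline.
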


\section{Conformal Dynamics are exact} \label{Aconfexact}
We assume that $(\cM, \omega=-d\lambda)$ is an exact symplectic manifold. We prove that every conformal dynamics is symplectically conjugate to a CES dynamics.

%We also have a result in the non exact conformal case for isotropic embedding.
{ Our first result explains that every conformal dynamics on an exact symplectic manifold is exact conformal with respect to some primitive of the symplectic form. 

}

  \begin{proposition}\label{PLiouvillembedding}
Let $f: \cM \righttoleftarrow$ be a (CS) diffeomorphism that is homotopic to ${\rm Id}_{\cM}$  and such that $f^*\omega=a\omega$. Then there exists a primitive $\lambda_1$ of $-\omega$, namely 
$$\lambda_1=\frac{1}{1-a}(\lambda-f^*\lambda)$$
such that $f$ is $\lambda_1$ CES. Hence is $j:\cS\hookrightarrow\cM$ is an isotropic embedding such that $j(\cS)$ is $f$ invariant, $j(\cS)$ is $\lambda_1$ exact.
\end{proposition}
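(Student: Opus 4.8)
The plan is to verify directly that $\lambda_1 := \frac{1}{1-a}(\lambda - f^*\lambda)$ does the job, using only the hypotheses $f^*\omega = a\omega$, $\omega = -d\lambda$, and the homotopy of $f$ to the identity. First I would check that $\lambda_1$ is a primitive of $-\omega$: since $d\lambda_1 = \frac{1}{1-a}(d\lambda - f^*d\lambda) = \frac{1}{1-a}(d\lambda - a\, d\lambda) = d\lambda = -\omega$, this is immediate. (Note $a \neq 1$ since $f$ is non-symplectic, so the expression makes sense.) Next I would compute $f^*\lambda_1 - a\lambda_1$ and show it is exact — in fact zero, which is even stronger. We have
\[
f^*\lambda_1 - a\lambda_1 = \frac{1}{1-a}\big(f^*\lambda - f^*f^*\lambda\big) - \frac{a}{1-a}\big(\lambda - f^*\lambda\big)
= \frac{1}{1-a}\big(f^*\lambda - (f^2)^*\lambda - a\lambda + a f^*\lambda\big).
\]
This does not obviously vanish, so instead I would argue as follows: $\eta := f^*\lambda_1 - a\lambda_1$ is closed (as noted in Proposition \ref{Phomothety}, for any choice of primitive), so it suffices to show $[\eta] = 0$ in $H^1(\cM,\R)$, and then CES-ness follows.

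The key computation is that $\lambda_1$ is, up to the closed form $\eta$, genuinely $f$-invariant in a weak sense. Observe $f^*\lambda_1 = \frac{1}{1-a}(f^*\lambda - (f^2)^*\lambda)$. A cleaner route: set $\mu := \lambda - f^*\lambda$, so $\lambda_1 = \frac{1}{1-a}\mu$ and $d\mu = (1-a)\omega$. Then
\[
f^*\mu - a\mu = (f^*\lambda - (f^2)^*\lambda) - a(\lambda - f^*\lambda) = -a\lambda + (1+a)f^*\lambda - (f^2)^*\lambda.
\]
I would instead exploit that $f$ is homotopic to $\mathrm{Id}_\cM$: for any closed $1$-form $\theta$, $f^*[\theta] = [\theta]$ in de Rham cohomology. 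Apply this with $\theta = \mu - (1-a)\lambda$... but $\lambda$ is not closed. The honest fix: $\eta = f^*\lambda_1 - a\lambda_1$ is closed, and one checks $[\eta]$ as a class is invariant under the homothety $[j] \mapsto a[j] + [j_0^*\eta]$ of Proposition \ref{Phomothety}; but more directly, since $f \simeq \mathrm{Id}$, the induced map on $H^1$ is the identity, and a short diagram chase (or the observation that $\lambda_1 - f^*\lambda_1 = \lambda_1 - f^*\lambda_1$ has the same cohomology class as a coboundary when $f \simeq \mathrm{Id}$) forces $[\eta] = (a-1)[\lambda_1] \cdot 0$; I expect $\eta$ is in fact exact because $f^*\lambda_1$ and $\lambda_1$ both primitive the same class $-\omega$ scaled, so $f^*\lambda_1 - a\lambda_1$ primitives $-\omega - a(-\omega)\cdot\tfrac{1}{?}$— this is the step to nail carefully.

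Finally, once $f$ is shown to be $\lambda_1$ CES, the statement about isotropic embeddings is an immediate application of Proposition \ref{Pinvalagrexact} (or Proposition \ref{Puniqueclass}): a $\lambda_1$ CES diffeomorphism has the property that the only possible Liouville class of an invariant isotropic submanifold $j(\cS)$ — with $f$ acting trivially on $H^1(j(\cS),\R)$, which holds here since $f \simeq \mathrm{Id}_\cM$ — is the fixed point $0$ of the homothety $x \mapsto a x$, hence $j(\cS)$ is $\lambda_1$ exact. The main obstacle I anticipate is the bookkeeping in showing $f^*\lambda_1 - a\lambda_1$ is exact rather than merely closed: the clean way is to write $f^*\lambda_1 - a\lambda_1 = (1-a)^{-1}\big(f^*\lambda - a\lambda\big) - (1-a)^{-1}\big((f^2)^*\lambda - a f^*\lambda\big) = (1-a)^{-1}\big[(f^*\lambda - a\lambda) - f^*(f^*\lambda - a\lambda)\big] = (1-a)^{-1}(\mathrm{Id} - f^*)\nu$ where $\nu := f^*\lambda - a\lambda$ is closed; since $f \simeq \mathrm{Id}_\cM$, $(\mathrm{Id}-f^*)\nu$ is exact, which completes the proof.
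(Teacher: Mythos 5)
Your final computation --- $f^*\lambda_1 - a\lambda_1 = (1-a)^{-1}(\mathrm{Id}-f^*)\nu$ with $\nu = f^*\lambda - a\lambda$ closed, hence exact because $f$ is homotopic to ${\rm Id}_\cM$ --- is exactly the paper's argument (the paper writes it as $\frac{1}{1-a}(\eta - f^*\eta)$ with $\eta = f^*\lambda - a\lambda$), and concluding the exactness of the invariant isotropic embedding by invoking Proposition \ref{Pinvalagrexact} with $\lambda_1$ in place of $\lambda$ is also how the paper finishes. The hesitant middle portion of your write-up is superseded by that last paragraph, so the proposal is correct and follows essentially the same route as the paper.
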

\begin{proof}[Proof of Proposition \ref{PLiouvillembedding}]

We denote $\eta=f^*\lambda-a\lambda$. Then $d\eta=-f^*\omega+a\omega=0$ and so $\eta$ is closed.
Observe that $$\lambda-\frac{1}{1-a}\eta=\frac{1}{1-a}\big(\lambda-f^*\lambda)=\lambda_1,$$ 
so $\lambda_1$ is a primitive of $-\omega$.

We have
$$f^*\lambda_1-a\lambda_1=\eta-\frac{1}{1-a}\big(f^*\eta-a\eta\big).
$$
Because $f$ is homotopic to ${\rm Id}_\cM$, $f^*\eta-\eta$ is exact and
$$f^*\lambda_1-a\lambda_1=\frac{1}{1-a}\big( \eta-f^*\eta\big).
$$
is exact. The conclusion comes from Proposition \ref{Pinvalagrexact}  for the 1-form $\lambda_1$ instead of $\lambda$.

\end{proof}
\begin{proposition}\label{PCES}
Let $f: \cM \righttoleftarrow$ be a conformal symplectic  diffeomorphism that is homotopic to ${\rm Id}_\cM$ and such that  $f^*\omega=a\omega$ with $a>0$ and $a\not=1$. Then $\eta=f^*\lambda-a\lambda$ is a closed 1-form, there exists a symplectically isotopic to ${\rm Id}_\cM$  diffeomorphism $g: \cM \righttoleftarrow$ such that $g^*\lambda-\lambda +\frac{1}{1-a}\eta$ is exact and then $g\circ f\circ g^{-1}$ is $\lambda$ CES.
\end{proposition}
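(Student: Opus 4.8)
The plan is to conjugate $f$ by a well-chosen symplectic diffeomorphism $g$ that absorbs the "cohomological obstruction" distinguishing $f$ from a $\lambda$-CES map. By Proposition \ref{PLiouvillembedding}, $f$ is CES with respect to the primitive $\lambda_1 = \frac{1}{1-a}(\lambda - f^*\lambda) = \lambda - \frac{1}{1-a}\eta$, where $\eta = f^*\lambda - a\lambda$ is closed. So the difference $\lambda_1 - \lambda = -\frac{1}{1-a}\eta$ is a closed 1-form, and the goal is to find $g$ symplectically isotopic to ${\rm Id}_\cM$ with $g^*\lambda - \lambda$ cohomologous to $\frac{1}{1-a}\eta$ (indeed, I want $g^*\lambda - \lambda + \frac{1}{1-a}\eta$ exact, i.e. $g^*\lambda - \lambda$ represents $[\eta]/(a-1)$ in $H^1(\cM,\R)$). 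The point is that for such a $g$, the primitive $g^*\lambda$ of $-\omega$ differs from $\lambda_1$ by an exact form, and pullback by a symplectic diffeomorphism transports the CES property, so $g \circ f \circ g^{-1}$ will be $\lambda$-CES.

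The main construction is therefore: given the closed 1-form $\mu := \frac{1}{a-1}\eta$, produce a symplectic diffeomorphism $g$, symplectically isotopic to the identity, with $[g^*\lambda - \lambda] = [\mu] \in H^1(\cM,\R)$. The natural device is a \emph{flux-type} construction: pick a closed 1-form $\theta$ on $\cM$ with $[\theta] = [\mu]$ (for instance $\theta = \pi^*\mu_0$ when $\cM = T^*\cQ$ and $\mu$ is pulled back from the base, but in general just take $\theta = \mu$ itself since $\eta$ is already a 1-form on $\cM$), let $Z_\theta$ be the vector field defined by $i_{Z_\theta}\omega = \theta$, and let $g_t$ be its time-$t$ flow. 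Each $g_t$ is symplectic (since $L_{Z_\theta}\omega = d i_{Z_\theta}\omega = d\theta = 0$), and the standard flux computation gives $\frac{d}{dt} g_t^*\lambda = g_t^*(L_{Z_\theta}\lambda) = g_t^*(i_{Z_\theta}d\lambda + d i_{Z_\theta}\lambda) = g_t^*(-\theta + d(\lambda(Z_\theta)))$, so modulo exact forms $\frac{d}{dt}[g_t^*\lambda] = -[\theta]$, hence $[g_1^*\lambda - \lambda] = -[\theta] = -[\mu]$. Adjusting the sign of $\theta$, set $g = g_1$ for $\theta = -\mu$; then $[g^*\lambda - \lambda] = [\mu] = \frac{1}{a-1}[\eta] = -\frac{1}{1-a}[\eta]$, so $g^*\lambda - \lambda + \frac{1}{1-a}\eta$ is exact, as desired. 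Note $g$ is symplectically isotopic to ${\rm Id}_\cM$ through the isotopy $(g_t)$.

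It remains to check that this $g$ does the job, i.e. that $h := g\circ f\circ g^{-1}$ is $\lambda$-CES. One computes $h^*\lambda - a\lambda = (g^{-1})^*f^*g^*\lambda - a\lambda$. Write $g^*\lambda = \lambda + \mu + d\varphi$ for some function $\varphi$ (using the previous paragraph, with $\mu = \frac{1}{a-1}\eta$). Since $f^*\omega = a\omega$ and $f$ is homotopic to ${\rm Id}$, $f^*$ acts trivially on $H^1$, so $f^*\mu - \mu$ is exact; also $f^*\lambda - a\lambda = \eta$ by definition. Substituting and using that $g^{-1}$ is symplectic (so $(g^{-1})^*$ preserves exactness and, being symplectically isotopic to ${\rm Id}$, acts trivially on $H^1$), a short chase shows $h^*\lambda - a\lambda$ is a sum of exact forms plus $\eta + (a-1)\mu = \eta - \eta = 0$ — more precisely it equals an exact form, which is exactly the CES condition. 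I would organize this final verification carefully since it is the place where signs and the homotopy-triviality of $f^*$ and $(g^{\pm1})^*$ on $H^1$ all have to line up.

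The main obstacle, and the only genuinely non-formal point, is the flux computation together with the bookkeeping of cohomology classes: one must be sure that the vector field $Z_\theta$ is complete (so its flow is globally defined) — this may require either assuming completeness, working with a compactly supported or suitably decaying representative $\theta$ of the class $[\mu]$, or restricting to the relevant cotangent-bundle setting where $Z_\theta = (q,p) \mapsto$ horizontal lift of $\mu$ has an obvious complete flow $\tau_t(p) = p + t\,\mu(\pi(p))$ as already used in Section \ref{Suniqueness}. Everything else (symplecticity of the flow, transport of the CES property under symplectic conjugation, the exactness of $f^*\eta - \eta$) is either immediate or already recorded in the excerpt.
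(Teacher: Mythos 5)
Your proposal is correct and follows essentially the same route as the paper: invoke Proposition \ref{PLiouvillembedding}, take $g$ to be the time-one map of the symplectic vector field $X$ with $i_X\omega=\frac{1}{1-a}\eta$, use the Cartan/flux computation to get $g^*\lambda-\lambda+\frac{1}{1-a}\eta$ exact, and transport exactness through the conjugation (your remark on completeness of the flow is a fair point the paper leaves implicit). Only a cosmetic slip: with your convention $\mu=\frac{1}{a-1}\eta$ the cancellation in the final chase reads $\eta+(1-a)\mu=0$, not $\eta+(a-1)\mu=0$, and the intended conclusion is unaffected.
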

\begin{proof}
We denote $\eta=f^*\lambda-a\lambda$. Then $d\eta=-f^*\omega+a\omega=0$ and so $\eta$ is closed.  We denote by $\lambda_1$ the primitive of $\omega$ that was defined in  Proposition \ref{PLiouvillembedding}.
%\begin{lemma}\label{L1}
%There exists a primitive $\lambda_1$ of $-\omega$, namely 
%$$\lambda_1=\lambda-\frac{1}{1-a}\eta=\frac{1}{1-a}\big(\lambda-f^*\lambda),$$ 
%such that $f$ is $\lambda_1$ CES.
%\end{lemma}
%\begin{proof}
%We have
%$$f^*\lambda_1-a\lambda_1=\eta-\frac{1}{1-a}\big(f^*\eta-a\eta\big).
%$$
%Because $f$ is homotopic to ${\rm Id}_\cM$, $f^*\eta-\eta$ %is exact and
%$$f^*\lambda_1-a\lambda_1=\frac{1}{1-a}\big( %\eta-f^*\eta\big).
%$$
%is exact.
%\end{proof}
\begin{lemma}\label{L2}
There exists a symplectic vector field $X$ with flow $(g_t)$ such that $g_1^*\lambda-\lambda_1$ is exact.
\end{lemma}
\begin{proof}
We consider the vector field $X$ that is defined by $i_X\omega=\frac{1}{1-a}\eta$. { As $\eta$ is closed, $X$ is symplectic. }

Then we have
$$L_X\lambda=-i_X\omega+d\big(i_X\lambda\big)=-\frac{1}{1-a}\eta+d\big(i_X\lambda\big).$$
{  If we denote by $[.]$ the cohomology class, this gives
$$[L_X\lambda]=-\frac{1}{1-a}[\eta]$$
i.e.
$$\frac{d[g_t^*\lambda-\lambda]}{dt}=-\frac{1}{1-a}[g_t^*\eta].$$
We deduce that for all $t$ we have
$g_t^*\lambda-\lambda+\frac{t}{1-a}\eta$ is exact. In particular, $g^*_1\lambda-\lambda_1$ is exact.}

%We deduce that the flow $(g_t)$ of $X$ satisfies
%$$g_t^*\lambda=\lambda+t\big(-\frac{1}{1-a}\eta+d\big(i_X\lambda\big)\bi%g)$$
%and 
%$$g_1^*\lambda-\lambda_1=d\big(i_X\lambda\big).$$

\end{proof}
We now consider $F=g_1\circ f\circ g_1^{-1}$. We have
$$F^*\lambda=\big(g_1^{-1}\big)^*\circ f^*\circ g_1^*(\lambda)=\big(g_1^{-1}\big)^*\circ f^*\big( \lambda_1 +\nu_1\big)$$
where $\nu_1$ is exact by lemma \ref{L2}. By Proposition \ref{PLiouvillembedding},  $\nu_2=f^*\lambda_1-a\lambda_1$ is exact and we have 
$$F^*\lambda=\big(g_1^{-1}\big)^* \big( a\lambda_1+\nu_2 + f^*\nu_1\big)=a\lambda+\big(g_1^{-1}\big)^{*}\big(-a\nu_1+\nu_2 + f^*\nu_1\big).$$
\end{proof}

\begin{proposition}\label{PCH}
Let $X$ be a conformal symplectic vector field on $\cM$ such that $L_X\omega=\alpha \omega$ with $\alpha\in\R^*$.
The 1-form $\xi=i_X\omega+\alpha\lambda$ is closed.
There exists a symplectically isotopic to ${\rm Id}_\cM$  diffeomorphism $g: \cM \righttoleftarrow$  such that $g^*\lambda-\lambda +\frac{1}{\alpha}\xi$ is exact. Then $g^*X$ is $\lambda$ conformal Hamiltonian.
\end{proposition}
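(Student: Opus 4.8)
The plan is to mimic closely the structure of the proof of Proposition~\ref{PCES}, which is the discrete-time analogue, replacing the cocycle computation with an infinitesimal (Lie-derivative) one. First I would record the closedness of $\xi=i_X\omega+\alpha\lambda$: since $L_X\omega=\alpha\omega$ we have $d(i_X\omega)=L_X\omega-i_X(d\omega)=\alpha\omega=-\alpha\,d\lambda$, so $d\xi=0$. Next I would introduce the primitive $\lambda_1=\lambda-\frac1\alpha\xi=-\frac1\alpha i_X\omega$ of $-\omega$ (this is the continuous-time counterpart of the $\lambda_1$ from Proposition~\ref{PLiouvillembedding}), and note that with respect to $\lambda_1$ the vector field $X$ is conformal Hamiltonian almost tautologically, since $i_X\omega=-\alpha\lambda_1$ means $i_X\omega=\alpha\lambda_1+d0$; more precisely $L_X\lambda_1=\alpha\lambda_1+d(i_X\lambda_1)$, an exact correction.

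Then I would construct the conjugating diffeomorphism. As in Lemma~\ref{L2}, let $Y$ be the vector field defined by $i_Y\omega=\frac1\alpha\xi$; since $\xi$ is closed, $Y$ is symplectic, and I take $g=g_1$, the time-one map of the flow $(g_t)$ of $Y$. The computation $L_Y\lambda=-i_Y\omega+d(i_Y\lambda)=-\frac1\alpha\xi+d(i_Y\lambda)$ gives, at the level of cohomology classes, $\frac{d}{dt}[g_t^*\lambda-\lambda]=-\frac1\alpha[g_t^*\xi]$. Because $Y$ is symplectic and (being a time-$t$ flow map) homotopic to the identity, $g_t^*\xi-\xi$ is exact for every $t$, so $[g_t^*\xi]=[\xi]$ is constant and integration yields that $g_1^*\lambda-\lambda+\frac1\alpha\xi=g_1^*\lambda-\lambda_1$ is exact.

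Finally I would verify that $g^*X$ — the push-forward vector field $Dg^{-1}\cdot X\circ g$, whose flow is $g^{-1}\circ\varphi_t\circ g$ — is $\lambda$ conformal Hamiltonian. The natural route is to transport everything by $g$: since $g$ is symplectic, $g^*X$ is still conformal symplectic of rate $\alpha$, so its associated closed $1$-form is $i_{g^*X}\omega+\alpha\lambda=g^*(i_X\omega)+\alpha\lambda=g^*(\alpha\lambda_1 + (i_X\omega-\alpha\lambda_1))+\alpha\lambda$; but $i_X\omega=-\alpha\lambda_1$ exactly, so this is $\alpha(g^*\lambda_1+\lambda)=\alpha(g^*\lambda-\nu+\lambda)$ where we use $g^*\lambda-g^*\lambda_1 = g^*(\lambda-\lambda_1)=\frac1\alpha g^*\xi$, and then the already-established exactness of $g^*\lambda-\lambda_1$ together with closedness of $\xi$ shows $i_{g^*X}\omega+\alpha\lambda$ is exact, i.e. $g^*X$ is $\lambda$ conformal Hamiltonian. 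I expect the only delicate point to be bookkeeping the direction of the conjugation (whether one wants $g\circ\varphi_t\circ g^{-1}$ or $g^{-1}\circ\varphi_t\circ g$) and the corresponding sign/placement of the exact correction term; the Lie-derivative identities themselves are routine, exactly parallel to Lemma~\ref{L2}.
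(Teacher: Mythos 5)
Your construction up to and including the exactness of $g^*\lambda-\lambda_1$ is exactly the paper's proof: closedness of $\xi$, the primitive $\lambda_1=\lambda-\frac1\alpha\xi=-\frac1\alpha i_X\omega$, the symplectic field $Y$ defined by $i_Y\omega=\frac1\alpha\xi$, and the cohomological integration $\frac{d}{dt}[g_t^*\lambda-\lambda]=-\frac1\alpha[\xi]$ (you even make explicit the invariance $[g_t^*\xi]=[\xi]$ that the paper leaves implicit). The gap is in the final verification, and it is not mere bookkeeping: for the vector field you actually wrote down, $Dg^{-1}\cdot X\circ g$ (the pull-back of $X$, with flow $g^{-1}\circ\varphi_t\circ g$), one has $i_{g^*X}\omega=g^*(i_X\omega)=-\alpha\, g^*\lambda_1$, hence
\[
i_{g^*X}\omega+\alpha\lambda=\alpha\bigl(\lambda-g^*\lambda_1\bigr),
\qquad
[\lambda-g^*\lambda_1]=[\lambda-g^*\lambda]+\tfrac1\alpha[g^*\xi]=\tfrac1\alpha[\xi]+\tfrac1\alpha[\xi]=\tfrac2\alpha[\xi],
\]
using precisely the exactness of $g^*\lambda-\lambda+\frac1\alpha\xi$ and $[g^*\xi]=[\xi]$. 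So with this choice of conjugation the obstruction class is $2[\xi]$, which is nonzero whenever $X$ is not already $\lambda$ conformal Hamiltonian; closedness of $\xi$ cannot rescue the argument, since closed-versus-exact is exactly what is at stake in this appendix. Your concluding sentence (``the already-established exactness of $g^*\lambda-\lambda_1$ together with closedness of $\xi$ shows $i_{g^*X}\omega+\alpha\lambda$ is exact'') is therefore false as stated; there are also minor slips (the form is $\alpha(\lambda-g^*\lambda_1)$, not $\alpha(g^*\lambda_1+\lambda)$, and $Dg^{-1}\cdot X\circ g$ is the pull-back, not the push-forward).

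The repair is the direction you yourself flagged as delicate, and it is the one the paper takes: what is proved $\lambda$ conformal Hamiltonian is the field $Dg\cdot X\circ g^{-1}$, whose flow is $g\circ\varphi_t\circ g^{-1}$, consistent with the discrete conjugation $g\circ f\circ g^{-1}$ of Proposition~\ref{PCES}. For that field the contraction with $\omega$ is $(g^{-1})^*(i_X\omega)=-\alpha\,(g^{-1})^*\lambda_1$, so the relevant form becomes $\alpha\,(g^{-1})^*\bigl(g^*\lambda-\lambda_1\bigr)$, which is exact by your own second step. Equivalently, you may keep the pull-back convention but replace $g$ by $g^{-1}$ (the time-one map of $-Y$), i.e.\ arrange that $g^*\lambda-\lambda-\frac1\alpha\xi$ be exact. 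The two directions differ by the nontrivial class $2[\xi]$, so the choice must be made correctly and cannot be absorbed into routine sign adjustments.
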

\begin{proof} We have $d\xi=L_X\omega-\alpha \omega$ hence $\xi$ is closed.
\begin{lemma}\label{L3}
There exists a primitive $\lambda_1$ of $-\omega$, namely 
$$\lambda_1=\lambda-\frac{1}{\alpha}\xi=-\frac{1}{\alpha}i_X\omega,$$
such that $X$ is $\lambda_1$ Hamiltonian.
\end{lemma}
\begin{proof}
We have $$i_X\omega+\alpha\lambda_1=i_X\omega+\alpha\lambda-\xi=0$$
is exact.
\end{proof}
\begin{lemma}\label{L4}
There exists a symplectic vector field $Y$ with flow $(\psi_t)$ such that $\psi_1^*\lambda-\lambda_1$ is exact.
\end{lemma}
\begin{proof}
We consider the vector field $Y$ that is defined by $i_Y\omega=\frac{1}{\alpha}\xi$. { As $\xi$ is closed, $Y$ is symplectic.
}Then we have
$$L_Y\lambda=-i_Y\omega+d\big(i_Y\lambda\big)=-\frac{1}{\alpha}\xi+d\big(i_Y\lambda\big).$$
{We deduce that the flow $(\psi_t)$ of $Y$ satisfies
$$\frac{d}{dt}[\psi_t^*\lambda-\lambda]=-\frac{1}{\alpha}[\xi].$$
Hence $\psi_1^*\lambda-\lambda_1=\psi_1^*\lambda-\lambda+\frac{1}{\alpha}\xi$ is exact.
}
%$$\psi_t^*\lambda=\lambda+t\big(-\frac{1}{\alpha}\xi+d\big(i_Y\lambda\big)\big)$$
%and 
%$$\psi_1^*\lambda-\lambda_1=d\big(i_Y\lambda\big).$$

\end{proof}
We denote $g=\psi_1$. Let us prove that $g^*X$ is $\lambda$ conformal Hamiltonian. Because $g$ is symplectic, we have
$$i_{g^*X}\omega=g_{*}\big( i_X\omega\big)=g_{*}(\xi-\alpha\lambda).$$
Because $g^*\lambda-\lambda_1$ is exact, $g_*\big( \xi-\alpha\lambda\big)+\alpha\lambda$ is exact { and 
$i_{g^*X}\omega+\alpha\lambda$ is exact and so $g^*X$ is conformal Hamiltonian.}

\end{proof}

\bibliographystyle{amsplain}

\end{document}